\newtheorem{theorem}{Theorem}[section]
\newtheorem{lemma}[theorem]{Lemma}
\newtheorem{proposition}[theorem]{Proposition}
\newtheorem{corollary}[theorem]{Corollary}
\theoremstyle{definition}
\newtheorem{definition}[theorem]{Definition}
\newtheorem{example}[theorem]{Example}
\newtheorem{remark}[theorem]{Remark}
\newtheorem{assumption}[theorem]{Assumption}
\newcommand{\R}{\mathbb R}
\newcommand{\Z}{\mathbb Z}
\newcommand{\C}{\mathbb C}
\newcommand{\So}{\mathcal{R}}
\newcommand{\Sm}{\mathcal{RM}}
\newcommand{\SF}{\mathbb S}
\DeclareMathOperator{\pv}{\wedge \mspace{-9.5mu}_* \ }
\newcommand{\N}{\mathbb N}
\newcommand{\HH}{\mathbb H}
\theoremstyle{remark} 
\numberwithin{equation}{section}
\begin{document}

\title[$*$-Log for Slice Regular Functions]{$*$-Logarithm for Slice Regular Functions}

\author[A. Altavilla]{A. Altavilla}\address{Altavilla Amedeo: Dipartimento di Matematica, Universit\`a degli Studi di Bari ``Aldo Moro'', Via Edoardo Orabona, 4, 70125,
Bari, Italia} \email{amedeo.altavilla@uniba.it}

\author[C. de Fabritiis]{C. de Fabritiis}\address{Chiara de Fabritiis: Dipartimento di Ingegneria Industriale e Scienze
Matematiche, Universit\`a Politecnica delle Marche, Via Brecce Bianche, 60131,
Ancona, Italia} \email{fabritiis@dipmat.univpm.it}
\thanks{Partially supported by GNSAGA of INdAM and the INdAM project ``Teoria delle funzioni ipercomplesse e applicazioni''. The authors acknowledge with pleasure the support of FBK-Cirm, Trento, and of the Clifford Research Group at Ghent University where parts of this paper were written.
The first author was also financially supported by a INdAM fellowship 
``{\em Mensilit\`a di borse di studio per l'estero a.a. 2018--2019}'' spent at Ghent University.}

\date{\today}

\subjclass[2010]{Primary 30G35; secondary 30C25, 32A30, 33B10}
\keywords{Slice-regular functions, quaternionic exponential, $*$-exponential,  $*$-logarithm.}

\begin{abstract} 
In this paper, we study the (possible) solutions of the equation 
$\exp_{*}(f)=g$, where $g$ is a slice regular never vanishing function on 
a circular domain of the quaternions $\HH$ and $\exp_{*}$ is the natural generalization
of the usual exponential to the algebra of slice regular functions.
Any function $f$ which satisfies $\exp_{*}(f)=g$ is called a $*$-logarithm of $g$.
We provide necessary and sufficient conditions, expressed in terms of the zero set
of the ``vector'' part $g_{v}$ of $g$, for the existence of a $*$-logarithm of $g$, under a natural topological condition on the domain $\Omega$. By the way, we prove an existence result if $g_{v}$ has no non-real isolated zeroes;
we are also able to give a comprehensive approach to deal with more general cases. 
We are thus able to obtain an existence result when the non-real isolated zeroes of $g_{v}$ are finite, the domain is either the unit ball, or $\HH$, or $\mathbb{D}$ and a further condition on the ``real part'' $g_{0}$ of $g$ is satisfied (see Theorem~\ref{final-result} for a precise statement).
We also find some unexpected uniqueness results, again related to the zero set of $g_{v}$, in sharp contrast with the complex case.
A number of examples are given throughout the paper in order to show 
the sharpness of the required conditions.
\end{abstract}
\maketitle

\section{Introduction}

The aim of this paper is to investigate on (possible) existence and/or uniqueness of solutions of 
\begin{equation}\label{exp}
\exp_{*}(f)=g,
\end{equation}
given a never-vanishing function $g$ which is slice-regular on a circular domain $\Omega$ of the quaternions $\mathbb H$. A solution of this equation will be
called $*$-logarithm of $g$.

The $*$-exponential operator $\exp_*$ on the space $\So(\Omega)$ of slice regular functions was introduced by Colombo, Sabadini and Struppa in~\cite{C-S-St-2} and was later studied by Altavilla and de Fabritiis in~\cite{A-dF}. We underline the fact that its definition in the form 
$\exp_{*}(f)=\sum\frac{f^{*n}}{n!}$, where $f^{*n}=f*\dots*f$ denotes the $*$-product of $f$ with itself $n$ times (i.e., the $n$th $*$-power),
was due to the remark that in general the pointwise product (and thus the pointwise powers) of slice regular functions is not slice regular.
One of the features of $\exp_*(f)$ is that, as anyone would expect, it is a never-vanishing slice regular function for any slice regular $f$, so Equation~(\ref{exp}) has no solution unless $g$ is never-vanishing. 
By an accurate investigation on the features of this operator we will be able 
to understand under which conditions on $\Omega$ and $g$, Equation~\eqref{exp}
admits a solution.

While preparing the final draft of this paper we heard that similar results, suggested by different motivations and obtained by distinct techniques, were
about to be published by Gentili, Prezelij and Vlacci (see~\cite{GPV}).

Slice-regular functions on quaternions were introduced by Gentili and Struppa in 2006 in order to give a suitable notion of regularity for functions of a quaternionic variable which would provide a good balance between two requirements: the first one is 
the necessity of a smooth behaviour, in the sense of existence of some kind of derivatives,   
while the second is the condition that the set of these functions is large enough to offer an interesting theory; for a detailed account on the path which led to this approach, see~\cite{G-S-St}. 
In particular, in our opinion one of the key-points of the theory is the definition of $*$-product which, together with the pointwise sum, gives to the space $\So(\Omega)$ the structure of an associative $*$-algebra. 

In this last 15 years the theory quickly developed in several directions, creating many connections with differential geometry, algebraic geometry, functional analysis, operator theory and applications to physics and engineering.

Before giving a description of the content of our paper, it is worth while looking a bit more thoroughly to the behaviour of the operator $\exp_*$ with the aim to explain the reason of its definition and the analogies and dissimilarities with the exponential in the classical (i.e., complex analytic) sense. 

Indeed, one can define an exponential map $\exp:\mathbb H\to \mathbb H\setminus\{0\}$ on the quaternions which turns out to be slice regular and slice preserving. Nonetheless, in general the composition of slice regular functions is not slice regular (for a more detailed treatment of the composition in the setting of slice regular functions see~\cite{Gal-OGC-Sabadini,R-W,V}). Thus, considering $\exp\circ f$ for any slice regular function $f$ provides a never vanishing function which could be non-regular. Indeed, the equality $\exp\circ f=\exp_*(f)$ holds true when $f$ is a slice preserving function and, on a suitable subset of $\Omega$, for one-slice preserving functions, but in general it does not for any slice regular function (see Definition~\ref{slice-preserving-definition}, Remark~\ref{caso-slice-pres} and Example~\ref{esempi-exp-composizione} for a detailed comparaison between $\exp\circ f$ and $\exp_*(f)$).

Thus, the topological approach used in the theory of holomorphic functions on complex numbers, where the logarithm of $g:\Omega\to \mathbb C\setminus\{0\}$ can be obtained by means of a lifting of the map $g$ with respect to the covering $\exp:\mathbb C\to \mathbb C\setminus\{0\}$, provided $\Omega$ is contractible, cannot be used on slice regular functions unless the function $g$ has special properties (i.e., it preserves at least one slice).
In antithesis with $\exp\circ f$, the definition of $\exp_{*}$ is the natural one, 
since it relies on the $*$-product, it provides an operator which maps $\So(\Omega)$ in $\So(\Omega)$
and it coincides with $\exp\circ f$ on slice preserving functions.

We now give an outline of the paper.

Section~\ref{preliminaries} contains definitions and preliminary material: we recall the basic definitions of the theory of slice regular functions together with the main topological definition we need, that is the notion of slice contractible domain, and we cite the main properties of the $*$-exponential proven in ~\cite{A-dF}.  
Following the approach originally due to Colombo, Gonzales-Cervantes and Sabadini, 
we can write
any slice regular function $f$ as a sum 
$f=f_{0}+f_{1}i+f_{2}j+f_{3}k$, where $i,j,k$ is the standard basis of imaginary quaternions and $f_{\ell}$, for $\ell=0,1,2,3$, are quaternionic valued, slice preserving regular functions.
The function $f_{0}$ can be interpreted as the ``real part'' of $f$ and $f_{v}=f_{1}i+f_{2}j+f_{3}k$ as the ``vector part'' of $f$. Thanks to the splitting $f=f_{0}+f_{v}$  we can give a useful rewriting of $\exp_*(f)$ in terms of the exponential of the real part of $f$ and of two auxiliary slice preserving functions, namely $\mu,\nu$, introduced in Definition~\ref{definizione-mu-nu}.  By means of $\mu$ and $\nu$, we can compute the $*$-exponential in an easier way and thus compare $\exp_*(f)$ and $\exp\circ f$ for $f\in\So(\Omega)$. Lastly, we prove that for any slice regular function $g$ without non-real isolated zeroes and such that $g^s\not\equiv 0$ has a square root $\tau$, the quotient $\tau^{-1}g$ is a well defined slice regular function.

In Section~\ref{mu} we investigate in full details the behaviour of the functions $\mu$ and $\nu$ introduced in the previous section.  For $n\in\N$, we define a family of circular domains $\mathcal D_n\subset \HH$, showing that the restriction of the map $\mu$ to $\mathcal{D}_{0}$ is biregular onto $\HH\setminus(-\infty,-1]$, while for any positive $n$, the restriction of the map $\mu$ to $\mathcal{D}_{n}$ is biregular onto $\HH\setminus((-\infty,-1]\cup[1,+\infty))$. This allows us to introduce the inverse $\varphi:\HH\setminus(-\infty,-1]\to\mathcal D_0$ of $\mu|_{\mathcal D_0}$ which is a never-vanishing function on $\mathcal D_0$ and
will be extensively used in Section~\ref{the-core}.

Section~\ref{first_existence_results} contains a first existence result for the $*$-logarithm: namely, we prove that any one-slice preserving never-vanishing function $g$ defined on a slice contractible domain $\Omega$ has a $*$-logarithm which preserves the same slice. Moreover, we show that, if $\Omega$ is slice and $g$ is positive on $\Omega\cap \R$, then there exists a \textit{unique} slice-preserving $*$-logarithm of $g$, while if $\Omega$ is product we can always find a slice preserving $*$-logarithm of a slice preserving function, but in this case the $*$-logarithm is \textit{never} unique.
This statement allows us to prove the existence of a solution of a cosine-sine problem, in strong analogy with the case of holomorphic functions. Indeed, given a slice contractible domain $\Omega$ and $a_{0},a_{1}\in\So_{\R}(\Omega)$ such that $a_{0}^{2}+a_{1}^{2}\equiv 1$, we show that there exists $\gamma\in\So_{\R}(\Omega)$ such that
$\cos_{*}(\gamma)=a_{0}$ and $\sin_{*}(\gamma)=a_{1}$.
As a consequence of this proposition we are able to classify zero divisors with identically zero real part on product domains.

In Section~\ref{uniqueness-results} we start by studying the uniqueness problem for the $*$-logarithm on slice preserving functions and we then turn to the general case. The main point is that we have to consider two different situations: the first when we deal with slice preserving functions and the second one when we take into account non slice preserving functions. 

We underline that the results we obtain in this section do not depend on any topological condition on the domain $\Omega$.

For the first case we need to define the set
$$ 
\mathfrak{N}(\Omega):=\{f\in\So(\Omega)\,|\,\exists\, m\in\Z\setminus\{0\}, f_{v}^{s}=m^{2}\pi^{2}\}\cup\So_{\R}(\Omega),
$$
whose elements have the property that the symmetrized function of their vectorial part always has a square root. This allows us to state the following 
\begin{theorem}
Let $h,\tilde{h}\in\So(\Omega)$ be such that $\exp_{*}(h)=\exp_{*}(\tilde{h})\in\So_{\R}(\Omega)$.

\noindent $\bullet$ If $\Omega$ is a slice domain, then $h_{0}\equiv \tilde{h}_{0}$, $h,\tilde{h}\in\mathfrak{N}(\Omega)$ and $\sqrt{h_{v}^{s}}\equiv \sqrt{\tilde{h}_{v}^{s}}$ \textnormal{(mod $2\pi$)}.

\noindent $\bullet$ If $\Omega$ is a product domain,  then there exists $n\in\Z$ such that
$h_{0}=\tilde{h}_{0}+\pi n\mathcal{J}$, where $\mathcal{J}$ is the function given in Definition~\ref{mathcalJ}. Moreover
$h,\tilde{h}\in\mathfrak{N}(\Omega)$ and $\sqrt{h_{v}^{s}}\equiv \sqrt{\tilde{h}_{v}^{s}}+n\pi$ \textnormal{(mod $2\pi$)}.

\end{theorem}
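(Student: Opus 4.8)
The plan is to reduce everything to the structure formula for $\exp_*$ in terms of $\mu$, $\nu$ and the real part, which was recalled in the preliminary sections. Writing $h = h_0 + h_v$ and $\tilde h = \tilde h_0 + \tilde h_v$, I would first apply this formula to both $h$ and $\tilde h$. Since $\exp_*(h) = \exp_*(\tilde h) \in \So_\R(\Omega)$, the \emph{vector part} of $\exp_*(h)$ must vanish identically. The structure formula expresses $\exp_*(h)$ as $e^{h_0}\bigl(\mu(h_v^s) + \nu(h_v^s)\, h_v\bigr)$ (up to the exact notation fixed in Definition~\ref{definizione-mu-nu}), so the vanishing of the vector part forces $\nu(h_v^s)\, h_v \equiv 0$, and on the set where $h_v \not\equiv 0$ this forces $\nu(h_v^s) \equiv 0$. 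The first key step is therefore to read off, from the explicit description of $\nu$ obtained in Section~\ref{mu}, that $\nu$ vanishes exactly at the points $m^2\pi^2$ with $m \in \Z \setminus\{0\}$; combined with the fact that $h_v^s$ is a slice preserving function, this pins $h_v^s$ down to a constant $m^2\pi^2$ (using that $\Omega$ is connected), which is precisely the statement $h \in \mathfrak{N}(\Omega)$, and likewise $\tilde h \in \mathfrak{N}(\Omega)$. In particular $h_v^s$ and $\tilde h_v^s$ each admit a (slice preserving, hence real‑analytic) square root $\sqrt{h_v^s}$, $\sqrt{\tilde h_v^s}$.

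Once we know $h, \tilde h \in \mathfrak{N}(\Omega)$, the structure formula simplifies drastically: on $\mathfrak{N}(\Omega)$ one has $\exp_*(h) = e^{h_0}\cos_*\!\bigl(\sqrt{h_v^s}\bigr)$-type expression, i.e. $\exp_*(h) = e^{h_0}\cos(\sqrt{h_v^s})$ as a genuine slice preserving function (the sine term drops because its coefficient, which is $\nu$, is zero). So the equation $\exp_*(h) = \exp_*(\tilde h)$ becomes
\[
e^{h_0}\cos\bigl(\sqrt{h_v^s}\bigr) \;=\; e^{\tilde h_0}\cos\bigl(\sqrt{\tilde h_v^s}\bigr),
\]
an identity between slice preserving functions, which we may test slicewise and thus reduce to an identity of holomorphic functions of one complex variable on each slice. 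Here the two cases split. On a \emph{slice domain}, restricting to $\Omega \cap \R$ we get an identity of real‑valued real‑analytic functions: $e^{h_0}\cos(\sqrt{h_v^s})$ with $\sqrt{h_v^s} = m\pi$ a constant of absolute value $\geq \pi$, so $\cos(\sqrt{h_v^s}) = (-1)^m$ is a nonzero constant, and similarly for $\tilde h$. Comparing signs and magnitudes forces $e^{h_0} = e^{\tilde h_0}$ hence $h_0 \equiv \tilde h_0$ (real‑valued), and then $(-1)^m = (-1)^{m'}$, i.e. $m \equiv m' \pmod 2$, which is exactly $\sqrt{h_v^s} \equiv \sqrt{\tilde h_v^s} \pmod{2\pi}$.

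On a \emph{product domain} the real part $h_0$ is no longer forced to be slice preserving in the rigid way, and the function $\mathcal{J}$ of Definition~\ref{mathcalJ} enters: $h_0$ and $\tilde h_0$ can now differ by an ``imaginary'' slice preserving idempotent‑type term. The strategy is the same — pass to the slicewise holomorphic picture — but now on each slice the logarithm of $e^{h_0}\cos(\sqrt{h_v^s}) = e^{\tilde h_0}\cos(\sqrt{\tilde h_v^s})$ is only determined modulo $2\pi i$, and the mismatch is measured by an integer $n$; tracking how the sign $(-1)^m$ interacts with this $2\pi i n$ ambiguity yields both $h_0 = \tilde h_0 + \pi n \mathcal{J}$ and the shifted congruence $\sqrt{h_v^s} \equiv \sqrt{\tilde h_v^s} + n\pi \pmod{2\pi}$ (the half‑period shift coming precisely from absorbing the sign change into $\cos$). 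The main obstacle I expect is the bookkeeping in this last step: one must be careful that the square root $\sqrt{h_v^s}$ is only defined up to sign globally, so "$\equiv \pmod{2\pi}$" has to be interpreted with a consistent choice, and on a product domain the function $\mathcal{J}$ is locally constant with values $\pm$ (one on each of the two "halves"), so the integer $n$ and the branch of the complex logarithm have to be matched component by component and then shown to be globally consistent. Everything else is a routine translation through the structure formula and the explicit zero set of $\nu$ from Section~\ref{mu}.
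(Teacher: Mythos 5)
Your proposal is correct and follows essentially the same route as the paper: the structure formula $\exp_*(h)=e^{h_0}(\mu(h_v^s)+\nu(h_v^s)h_v)$, the zero-product property to get $\nu(h_v^s)\equiv 0$ or $h_v\equiv 0$, the discreteness of the zero set $\{m^2\pi^2\}$ of $\nu$ to pin down $h_v^s$ and place $h,\tilde h$ in $\mathfrak{N}(\Omega)$, and then a comparison of $e^{h_0}(-1)^m$ with $e^{\tilde h_0}(-1)^{m'}$ (the paper merely performs the real-part comparison first, via its Corollary on $(\exp_*f)^s=\exp(2f_0)$, rather than last). The product-domain bookkeeping you leave as a sketch is exactly the paper's classification of slice preserving solutions of $\exp(f_0)\equiv 1$ via the Representation Formula, which yields $f_0=2\pi k\mathcal J$ and hence the stated shift by $n\pi\mathcal{J}$ and $n\pi$.
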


When the $*$-exponential of the functions we are considering do not belong to $\So_\R(\Omega)$, the conclusion we obtain is quite different.

\begin{theorem}
Let $h,\tilde{h}\in\So(\Omega)$ be such that $h\neq\tilde h$ and $\exp_{*}(h)=\exp_{*}(\tilde{h})\not\in\So_{\R}(\Omega)$.

\noindent $\bullet$ If $\Omega$ is a slice domain, then 
$h_{0}\equiv \tilde{h}_{0}$, 
both $h_{v}$ and $\tilde h_{v}$ 
have no non-real isolated zeroes, both $h_{v}^{s}$ and $\tilde{h_{v}^{s}}$ have a square root on $\Omega$ and there exist $m\in\Z\setminus\{0\}$, $\alpha\in\So_\R(\Omega)$ and $H_v\in\So(\Omega)$ with $H_v^s\equiv1$  such that $h_v=\alpha H_v$ and 
$\tilde{h}_{v}=(\alpha +2\pi m)H_v=h_{v}+2\pi m H_v$, so that
$$
\tilde h=h+2\pi m H_v.
$$
\noindent $\bullet$ If $\Omega$ is a product domain,  then one of the following holds
\begin{enumerate}
\item there exists $n\in\Z\setminus\{0\}$ such that
$$\tilde h=h+2\pi n\mathcal{J}.$$
\item both $h_v$ and $\tilde h_v$ are not zero divisors and have no non-real isolated zeroes, both $h_{v}^{s}$ and $\tilde{h_{v}^{s}}$ have a square root on $\Omega$ and there exists
$n,m\in\mathbb Z$ such that $m\neq0$   and  
$m\equiv n$ \textnormal{(mod $2$)}, $\alpha\in\So_\R(\Omega)$ and $H_v\in\So(\Omega)$ with $H_v^s\equiv1$, such that 
$h_v=\alpha H_v$ and 
$$\tilde{h}=h_{0}+\pi n\mathcal{J}+(\alpha +\pi m)H_v=h+\pi (n\mathcal{J}+ m H_v).$$
\end{enumerate}
\end{theorem}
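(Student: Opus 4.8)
The plan is to reduce everything to the already-established structure of $\exp_*$ in terms of the slice-preserving functions $\mu,\nu$ and the real part, and then exploit the classification of zero divisors and the analysis of $\mu,\nu$ from Sections~\ref{mu} and~\ref{uniqueness-results}. First I would write $h=h_0+h_v$, $\tilde h=\tilde h_0+\tilde h_v$, and use the rewriting of the $*$-exponential (the formula involving $\exp(h_0)$, $\mu(h_v^s)$ and $\nu(h_v^s)$) to translate the hypothesis $\exp_*(h)=\exp_*(\tilde h)$ into a system of equations: one equation comparing the ``real parts'' and one comparing the ``vector parts'' of the two $*$-exponentials. Since by hypothesis $\exp_*(h)=\exp_*(\tilde h)\notin\So_\R(\Omega)$, the vector part of the common value is not identically zero; this is the key qualitative difference from the previous theorem and is what forces the rigidity below.

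The heart of the argument is the vector-part equation. Comparing the vector parts of $\exp_*(h)$ and $\exp_*(\tilde h)$, and using that $\exp(h_0)$, $\exp(\tilde h_0)$ are nowhere zero, I would deduce first that $h_0\equiv\tilde h_0$ (using that the common value has nonzero vector part, so the scalar factors must match; in the product case one only gets $h_0=\tilde h_0+\pi n\mathcal J$, which is exactly alternative (1) when the vector parts happen to coincide, and enters as a correction term otherwise). Then the vector parts $h_v,\tilde h_v$ must be proportional, over the slice-preserving functions, to a common $H_v$ with $H_v^s\equiv 1$: this is where the hypothesis that forces $h_v^s$ (resp. $\tilde h_v^s$) to have a square root comes in — write $h_v=\alpha H_v$ with $\alpha=\sqrt{h_v^s}\in\So_\R(\Omega)$ and $H_v=\alpha^{-1}h_v$, which is legitimate precisely because $h_v$ has no non-real isolated zeroes (the division lemma cited at the end of Section~\ref{preliminaries}). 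The functions $\nu(h_v^s)$ and $\nu(\tilde h_v^s)$ then reduce, along $H_v$, to ordinary complex sine-type expressions in $\alpha$ and $\tilde\alpha$, and the equality of the two $*$-exponentials becomes the scalar identities $\cos\alpha=\cos\tilde\alpha$, $\sin\alpha=\sin\tilde\alpha$ (together with $\exp(h_0)=\exp(\tilde h_0)$), forcing $\tilde\alpha\equiv\alpha+2\pi m$ for some integer-valued, hence (by continuity and connectedness of a slice) constant, $m$; the non-vanishing of the vector part of the common value rules out $m$ being chosen so as to flip $H_v$, so $m\neq 0$ and one gets $\tilde h_v=(\alpha+2\pi m)H_v=h_v+2\pi m H_v$, whence $\tilde h=h+2\pi m H_v$.

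For the product case I would run the same computation but keeping track of the function $\mathcal J$ (Definition~\ref{mathcalJ}), which is the extra slice-preserving square root of $-1$ that exists on product domains and accounts for the non-uniqueness of slice-preserving $*$-logarithms established in Section~\ref{first_existence_results}. Now $h_0$ and $\tilde h_0$ may differ by $\pi n\mathcal J$, and the vector-part equation must be solved modulo this ambiguity; the compatibility between the integer $n$ coming from the $\mathcal J$-shift and the integer $m$ coming from the periodicity of sine/cosine is exactly the congruence $m\equiv n\pmod 2$ (because shifting $h_0$ by $\pi\mathcal J$ has the same effect on the $*$-exponential as simultaneously changing the branch of the square root of $h_v^s$ by $\pi$, i.e. it couples the two integers mod $2$). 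The two alternatives correspond to whether $H_v$ is well-defined and nontrivial (giving (2)) or the vector parts already agree so that the whole discrepancy sits in the $\mathcal J$-term (giving (1) with $n\neq 0$, since $h\neq\tilde h$). Throughout I would invoke: the $\mu,\nu$-rewriting of $\exp_*$, the biregularity of $\mu|_{\mathcal D_0}$ and the properties of $\varphi$ from Section~\ref{mu}, the division lemma for functions without non-real isolated zeroes, and the classification of zero divisors with vanishing real part on product domains from Section~\ref{first_existence_results}.

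The main obstacle I anticipate is the bookkeeping in the product case: one must show that \emph{exactly} the two listed alternatives occur and nothing in between, which requires a careful case analysis of when $h_v^s$ vanishes, when $h_v$ is a zero divisor, and how the two sources of non-uniqueness ($\mathcal J$-shifts and $2\pi$-periodicity along $H_v$) interact — in particular establishing the mod-$2$ congruence and ruling out the ``half-integer'' possibilities. A secondary technical point is justifying that the locally constant integers $m,n$ are globally constant, which uses connectedness of the slices and the fact (from Section~\ref{preliminaries}) that slice-preserving functions are determined by their restriction to one slice; and one must check that the square roots $\sqrt{h_v^s}$, $\sqrt{\tilde h_v^s}$ are genuinely slice-regular on all of $\Omega$, not merely locally, which is where the hypothesis ``no non-real isolated zeroes'' is doing real work.
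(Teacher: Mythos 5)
Your overall strategy --- rewrite $\exp_*$ via the $\mu,\nu$ formula, split into a ``real part'' equation and a ``vector part'' equation, reduce the latter to the $2\pi$-periodicity of cosine and sine along a common direction $H_v$, and track the extra $\mathcal J$-ambiguity in the product case --- is the same as the paper's. But there is a genuine logical gap at the heart of your argument: you write $h_v=\alpha H_v$ with $\alpha=\sqrt{h_v^s}$ and $H_v=\alpha^{-1}h_v$, justifying this ``because $h_v$ has no non-real isolated zeroes'' and because $h_v^s$ has a square root. Neither of these is a hypothesis of the theorem; both are \emph{conclusions} that must be proved from the single assumption $\exp_*(h)=\exp_*(\tilde h)\not\in\So_\R(\Omega)$, $h\neq\tilde h$. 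Moreover the theorem is stated for an arbitrary circular domain with no topological assumption (the section explicitly stresses this), so the square root of $h_v^s$ cannot be obtained from slice-contractibility either. Your proposal therefore assumes exactly the nontrivial structural content of the statement and only verifies the easy consequence (the $2\pi m$ shift) once that structure is granted.

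What the paper does to fill this hole is the part your outline is missing. From $\nu(h_v^s)h_v\equiv\nu(\tilde h_v^s)\tilde h_v\not\equiv 0$ one first gets that $h_v$ and $\tilde h_v$ are linearly dependent over $\So_\R(\Omega)$ and commute; one then restricts to a slice-contractible product subdomain $\widetilde\Omega$ on which $\nu(h_v^s)h_v$ is never vanishing, so that locally $h_v^s$ has a non-vanishing square root $\alpha$ and $\tilde h_v=\beta h_v$ for some $\beta\in\So_\R(\widetilde\Omega)$. Applying the known criterion for $\exp_*((1-\beta)h_v)\equiv 1$ yields $(1-\beta)^2h_v^s=4m^2\pi^2$, hence $\beta=1+2\pi m/\alpha$ and the relation $\alpha\tilde h_v=(2\pi m+\alpha)h_v$ on $\widetilde\Omega$. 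Taking symmetrizations gives $\tilde h_v^s=(2\pi m+\alpha)^2$ and the explicit identity
$$
h_v^s=\frac{1}{16\pi^2m^2}\bigl(\tilde h_v^s-h_v^s-4\pi^2m^2\bigr)^2,
$$
whose two sides are regular on all of $\Omega$, so by the Identity Principle $h_v^s$ is globally a perfect square --- this is how the square root is produced, not assumed. The absence of non-real isolated zeroes of $h_v$ then follows by contradiction: at such a zero $q_0$ the left side of $\alpha\tilde h_v=(2\pi m+\alpha)h_v$ vanishes on all of $\SF_{q_0}$ while the right side reduces to $2\pi m\,h_v$, which has an isolated zero there. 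Only after these steps is Proposition~\ref{quotient} applicable to define $H_v=h_v/\alpha$ with $H_v^s\equiv 1$. Without this chain your ``scalar identities $\cos\alpha=\cos\tilde\alpha$, $\sin\alpha=\sin\tilde\alpha$'' have no common $H_v$ along which to be read off, and the proof does not get started. (Two minor further remarks: the paper's proof of this theorem does not use the biregularity of $\mu|_{\mathcal D_0}$ or the function $\varphi$, which belong to the existence results of Section~\ref{the-core}; and the zero-divisor subcase in the product domain is handled by a direct contradiction with $h_v\neq\tilde h_v$, not via Proposition~\ref{norealpart}.)
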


If the domain $\Omega$ is slice and the function $h_{v}$ has a non real isolated zero, the above theorem gives an unexpected uniqueness result for the $*$-logarithm.

\begin{corollary}
Let $\Omega$ be slice
and $h\in\So(\Omega)$ be such that $\exp_{*}(h)\not\in\So_{\R}(\Omega)$.
If $h_{v}$ has a non real isolated zero, then $\exp_{*}(h)=\exp_{*}(\tilde h)$ if and only if $\tilde h\equiv h$.
\end{corollary}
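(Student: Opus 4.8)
The plan is to obtain the corollary as an immediate consequence of the second of the two theorems stated above, namely the one describing the non-real case on slice domains; the ``if'' direction requires no argument whatsoever.

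Concretely, I would first note that if $\tilde h\equiv h$ then trivially $\exp_*(h)=\exp_*(\tilde h)$, so that implication is free. For the converse I would argue by contradiction. Assume $\exp_*(h)=\exp_*(\tilde h)$ and, aiming at a contradiction, suppose $\tilde h\not\equiv h$. Then the pair $(h,\tilde h)$ meets all the hypotheses of the slice-domain part of the preceding theorem: $\Omega$ is slice, $h\neq\tilde h$, and $\exp_*(h)=\exp_*(\tilde h)\not\in\So_\R(\Omega)$, the last membership being part of the corollary's hypothesis together with the identity $\exp_*(\tilde h)=\exp_*(h)$. Among the conclusions of that theorem is the statement that $h_v$ has no non-real isolated zeroes, which directly contradicts the standing hypothesis that $h_v$ does possess a non-real isolated zero. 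Hence $\tilde h\equiv h$, and the corollary follows.

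There is essentially no obstacle to overcome here, since all the substance is already packaged in the theorem; the proof is simply the observation that ``$h_v$ has a non-real isolated zero'' is incompatible with the dichotomy the theorem establishes. The one point worth a moment's attention is that the theorem is symmetric in its two arguments, so its conclusion ``$h_v$ has no non-real isolated zeroes'' applies to the very function $h$ appearing in the corollary (and not merely to $\tilde h$), making the contradiction legitimate. It is also worth remarking, contrapositively, that a single non-real isolated zero of $h_v$ already rules out the rigid family of perturbations $h\mapsto h+2\pi m H_v$ that the theorem produces whenever $\tilde h\neq h$; this is precisely the mechanism forcing uniqueness of the $*$-logarithm here, an effect with no analogue in the classical complex setting.
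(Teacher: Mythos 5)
Your proof is correct and matches the paper's (implicit) argument: the corollary is presented there as a direct restatement of the preceding uniqueness theorem, whose slice-domain case concludes that $h_v$ has no non-real isolated zeroes whenever $h\neq\tilde h$, which is exactly the contradiction you exploit. Nothing further is needed.
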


Last Section contains the most important existence results of our paper. First of all we show that, if the domain is slice-contractible, we can always limit ourselves to look for the $*$-logarithm of a slice regular function $g$ such that $g^s\equiv1$.

We then get rid of the case when $g_v$ is a zero divisor, showing that  if  $\Omega$ is a slice contractible domain and $g$ is a never vanishing function such that $g_v$ is a zero-divisor, then there exists a $*$-logarithm of $g$.

Next we turn to the case when $g_v$ is not a zero divisor (which is always the case when $\Omega$ is slice); under this hypothesis
we find a necessary condition for the existence of a $*$-logartithm of a function $g$ whose symmetrized function is identically equal to $1$. Indeed, if there exists a $*$-logarithm of such a $g$ we have that
\begin{enumerate}
\item if $\Omega$ is a slice domain and $q_{0}$ is a non-real isolated zero of $g_{v}$, then $g(q_{0})=1$;
\item if $\Omega$ is a product domain and $q_{0}, q_{1}$ are non-real
isolated zeroes of $g_{v}$, then, either $g(q_{0})=g(q_{1})=1$
or $g(q_{0})=g(q_{1})=-1$.
\end{enumerate}
Thus the non real isolated zeroes of $g_v$ are the true obstruction we have to overcome in order to get the existence of a $*$-logarithm. We notice that, in the case of a slice domain, the above conclusion gives the following unexpected negative results for the existence of a $*$-logarithm of a function.

\begin{enumerate}
\item Let $\Omega$ be a slice domain and $f\in\So(\Omega)$ be such that $f_{v}$ has a non-real isolated zero. Then $-\exp_{*}(f)$ has no $*$-logarithm. 
\item Let $\Omega$ be a slice contractible slice domain and $g\in\So(\Omega)$ be a never-vanishing function such that $g_{v}$ has a non-real isolated zero. Then at least one 
between $g$ and $-g$ has no $*$-logarithm. 
\end{enumerate}
%
%
%
%

The fact that the non real isolated zeroes of $g_v$ are the genuine obstruction for the existence of a global $*$-logarithm of a function is confirmed by the following statement.

\begin{theorem}\label{thm14}
Let $\Omega$ be slice contractible. Then any never vanishing $g\in\So(\Omega)$ such that $g_{v}$ has no non-real isolated zeroes has a $*$-logarithm. 
\end{theorem}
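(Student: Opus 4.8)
The plan is to reduce the general case to a setting where the topological lifting argument from the complex theory can be imported, using the structural decomposition $g = g_0 + g_v$ and the auxiliary functions $\mu,\nu$ together with the inverse map $\varphi$ constructed in Section~\ref{mu}. By the reduction mentioned in the last section, we may assume $g^s \equiv 1$; writing $g = g_0 + g_v$, the condition $g^s\equiv 1$ becomes $g_0^2 + g_v^s \equiv 1$ (here $g_v^s = -g_v^2$ as a slice preserving function, so $g_v^s \geq 0$ on $\Omega\cap\R$ with the appropriate sign conventions). Since $g_v$ has no non-real isolated zeroes, the result recalled at the end of Section~\ref{preliminaries} applies: $g_v^s$ admits a slice preserving square root $\tau$ on $\Omega$ (possibly after shrinking; here slice contractibility guarantees we can extract such a root globally), and moreover $H_v := \tau^{-1} g_v$ is a well-defined slice regular function with $H_v^s \equiv 1$. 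Thus $g = g_0 + \tau H_v$ with $g_0^2 + \tau^2 \equiv 1$ and $H_v^s\equiv 1$, i.e. $H_v$ behaves formally like an ``imaginary unit'' inside the $*$-algebra.

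Next I would look for the $*$-logarithm in the form $f = f_0 + \beta H_v$ with $f_0,\beta \in \So_\R(\Omega)$, mimicking the polar form $\log(re^{i\theta}) = \log r + i\theta$. Using the rewriting of $\exp_*(f)$ in terms of $\exp(f_0)$ and the functions $\mu,\nu$ (Definition~\ref{definizione-mu-nu}), and exploiting $H_v^s\equiv 1$ so that $H_v$ $*$-commutes appropriately and $\mu(\beta), \nu(\beta)$ reduce to the ordinary $\cos$ and $\sin$ of $\beta$, one computes that $\exp_*(f_0 + \beta H_v) = \exp(f_0)\bigl(\cos_*(\beta) + \sin_*(\beta)\, H_v\bigr)$. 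Hence solving $\exp_*(f) = g$ amounts to solving the system $\exp(f_0)\cos_*(\beta) = g_0$ and $\exp(f_0)\sin_*(\beta) = \tau$ in $\So_\R(\Omega)$, which — after noting $g_0^2 + \tau^2 \equiv 1$ — is exactly the cosine–sine problem solved in Section~\ref{first_existence_results}: there exists $\gamma\in\So_\R(\Omega)$ with $\cos_*(\gamma) = g_0$, $\sin_*(\gamma) = \tau$ (taking $f_0 \equiv 0$), using slice contractibility of $\Omega$. Setting $f := \gamma H_v$ then yields $\exp_*(f) = g$.

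The main obstacle I anticipate is twofold. First, the extraction of the square root $\tau$ of $g_v^s$ and the verification that $H_v = \tau^{-1}g_v$ is genuinely slice regular (not merely slice regular off a bad set): this is precisely where ``no non-real isolated zeroes'' is used, via the result cited at the end of Section~\ref{preliminaries}, and where slice contractibility is needed to pass from a local to a global square root — one must check the relevant real-analytic square root of the nonnegative slice preserving function $g_v^s$ on $\Omega\cap\R$ extends to a slice preserving regular function on all of $\Omega$. Second, when $g_v$ happens to be a zero divisor (which can occur on product domains), the decomposition $g = g_0 + \tau H_v$ with $H_v^s\equiv 1$ may degenerate; but this case is handled separately by the earlier statement that on a slice contractible domain a never-vanishing $g$ with $g_v$ a zero divisor always has a $*$-logarithm, so one simply invokes that result and restricts attention to the non-zero-divisor case above. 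Assembling these two reductions with the cosine–sine proposition completes the argument.
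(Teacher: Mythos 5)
Your proposal is correct and follows essentially the same route as the paper: reduce to $g^s\equiv 1$, dispose of the degenerate (zero-divisor) case separately, factor $g_v$ as a slice preserving square root of $g_v^s$ times a function of symmetrized function $1$, and solve the resulting cosine--sine system via Proposition~\ref{cos-sin} to get $f=\gamma H_v$. The only cosmetic difference is that the paper first extracts a never-vanishing factor $W$ from $g_v$ and divides $\gamma$ by the never-vanishing $\sqrt{W^s}$, whereas you divide $g_v$ directly by $\sqrt{g_v^s}$ and invoke Proposition~\ref{quotient} for regularity of the quotient; the two constructions produce the same $*$-logarithm.
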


We now consider the problem near the non-real isolated zeroes of $g_{v}$:
by means of the inverse of the function $\mu$, we are able to build a $*$-logarithm of $g$ near these points (to be more precise, afar from the points where $g_0$ takes values in $(-\infty, -1]$). 

\begin{theorem}\label{thm15}
Let $\Omega$ be slice contractible and $g\in\So(\Omega)$. If $g^s\equiv 1$ and for any $q_{0}\in\Omega$ that is a non-real isolated zero of $g_{v}$
we have that $g_{0}(q_{0})=1$, then,
on every connected component of $\Omega\setminus g_{0}^{-1}((-\infty,-1])$,
there exists a $*$-logarithm of $g$.
%
\end{theorem}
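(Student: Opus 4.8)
The plan is to reduce the problem, near a non-real isolated zero of $g_v$, to an explicit inversion of the map $\mu$, whose local biregularity onto $\HH\setminus((-\infty,-1]\cup[1,+\infty))$ was established in Section~\ref{mu}. First I would recall the rewriting of the $*$-exponential from Section~\ref{preliminaries}: for $f=f_0+f_v$ one has $\exp_*(f)=\exp(f_0)*(\mu(\sqrt{f_v^s})+\nu(\sqrt{f_v^s})\,f_v)$ for a suitable branch of the square root (when $f_v^s$ admits one), with $\mu,\nu$ slice preserving. Since $g^s\equiv1$ and $g$ is never vanishing, write $g=g_0+g_v$ with $g_0^2-g_v^s=g^s=1$, so $g_0^2-1=g_v^s$; hence on $\Omega\setminus g_0^{-1}((-\infty,-1])$ we have $g_0\notin(-\infty,-1]$ and $g_0\pm1$ behaves well. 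Fix a connected component $U$ of $\Omega\setminus g_0^{-1}((-\infty,-1])$. The goal is to produce $f\in\So(U)$ with $f_0$ real (slice preserving) and $f_v$ with $f_v^s$ admitting a square root, such that $\exp_*(f)=g$ on $U$.

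The key step is to set $f_0:=\log\circ\,\mu^{-1}$-type data and to use $\varphi:\HH\setminus(-\infty,-1]\to\Cur_0$, the inverse of $\mu|_{\Cur_0}$ introduced at the end of Section~\ref{mu}, which is slice preserving and never vanishing. Concretely, I would compose: since $g_0$ is a slice preserving function with values in $\HH\setminus(-\infty,-1]$ on $U$, the function $\varphi\circ g_0$ is well defined, slice preserving and never vanishing on $U$; it plays the role of (the exponential of) a candidate $\sqrt{f_v^s}$. Then one checks, using $g_0^2-1=g_v^s$ and the defining identities $\mu^2-\nu^2(\,\cdot\,)=1$ relating $\mu,\nu$, that the pair $(g_0,g_v)$ is matched by taking $f_v:=\psi\,g_v$ for an explicit slice preserving scalar factor $\psi$ built from $\varphi\circ g_0$ (essentially $\nu/\mu$ read off $\varphi$), and $f_0:=$ the slice preserving logarithm of the positive-on-$\R$ part coming from $\mu(\varphi\circ g_0)$; here I would invoke the existence and uniqueness of a slice preserving $*$-logarithm for slice preserving functions positive on $\Omega\cap\R$ from Section~\ref{first_existence_results}, applied on $U$ (which is slice contractible as a connected component of the slice contractible $\Omega$ minus a slice preserving real-valued preimage). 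The hypothesis $g_0(q_0)=1$ at every non-real isolated zero $q_0$ of $g_v$ is exactly what guarantees that this factor $\psi$ extends regularly across $q_0$: at such a point $\mu(\varphi\circ g_0)(q_0)=\mu(\varphi(1))$ is the value forcing $\nu(\varphi\circ g_0)(q_0)$ to vanish to the right order so that $\psi\,g_v$ remains slice regular (this is the analogue of the quotient result quoted at the end of Section~\ref{preliminaries}, that $\tau^{-1}g$ is slice regular when $g$ has no non-real isolated zeroes and, here, when the zeroes are compensated by $g_0=1$).

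Having produced $f=f_0+f_v$ on $U$, I would finish by verifying $\exp_*(f)=g$ directly through the $\mu,\nu$ formula: by construction $\exp(f_0)=\mu(\sqrt{f_v^s})^{-1}$-type real factor times the data, and $\mu(\sqrt{f_v^s})+\nu(\sqrt{f_v^s})f_v$ reproduces $g_0+g_v$ because $\varphi$ was chosen to invert $\mu$ and the scalar $\psi$ was chosen to match the vector parts; the identity $g^s\equiv1$ is what makes the two scalar conditions ($\mu$-part $=g_0$, norm condition) collapse to a single consistent one. The main obstacle I expect is precisely the regularity of $f_v=\psi\,g_v$ across the non-real isolated zeroes of $g_v$: one must show that the slice preserving factor $\psi$, a priori only defined where $g_v$ is invertible, extends slice regularly to all of $U$, and that its zero divisor behaviour is controlled — this is where the hypothesis $g_0(q_0)=1$ is used in an essential way, and where the explicit local model of $\mu$ from Section~\ref{mu} (in particular the precise branch points at $-1$ and $+1$) does the work. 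A secondary technical point is checking that passing to a connected component $U$ of $\Omega\setminus g_0^{-1}((-\infty,-1])$ preserves slice contractibility so that the slice preserving $*$-logarithm of Section~\ref{first_existence_results} is available; this should follow from the structure of slice contractible domains recalled in Section~\ref{preliminaries}, since $g_0$ is slice preserving and real-valued so its preimage of $(-\infty,-1]$ is a symmetric slice set.
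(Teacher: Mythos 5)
Your overall strategy coincides with the paper's: compose $g_0$ with the biregular inverse $\varphi$ of $\mu|_{\mathcal{D}_{0}}$ and obtain $f_v$ as a slice preserving multiple of the vector part. However, the one computation you defer to ``one checks'' is where the whole proof lives, and several of the steps you do commit to are off. The paper factors $g_v=\alpha W$ on $\Omega$ (with $W$ carrying exactly the non-real isolated zeroes) and sets $\beta=\alpha/(\nu\circ\varphi\circ g_0)$, $f=f_v=\beta W$. The decisive observation is that $\nu$ is \emph{never vanishing on} $\mathcal{D}_{0}$ (Remark~\ref{zero}), so $\beta$ is slice regular on all of $\mathcal{U}$ from the outset: the ``main obstacle'' you anticipate --- extending the scalar factor $\psi$ across the zeroes of $g_v$ --- does not arise at all, and your guess ``essentially $\nu/\mu$ read off $\varphi$'' is not the right factor. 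The hypothesis $g_0(q_0)=1$ enters elsewhere: one must verify $f_v^s=\beta^2W^s=\varphi\circ g_0$, and on a sphere where $W^s$ vanishes identically the left-hand side is $0$, so the right-hand side must vanish too, which is precisely $\varphi(g_0(q_0))=\varphi(1)=0$; away from those spheres the identity follows from $g_v^s=1-g_0^2$ and $\mu^2(q)+\nu^2(q)q\equiv1$. Note the signs here: by Proposition~\ref{product-properties}, $g^s=g_0^2+g_v^s$, not $g_0^2-g_v^s$, and the $\mu,\nu$ identity carries a plus sign. Note also that formula~\eqref{munu} reads $\exp_*(f)=\exp(f_0)\bigl(\mu(f_v^s)+\nu(f_v^s)f_v\bigr)$ with $\mu,\nu$ evaluated at $f_v^s$ itself, not at $\sqrt{f_v^s}$: this is the entire point of introducing $\mu$ and $\nu$, since $f_v^s$ vanishes to first order along the sphere of a simple non-real isolated zero of $f_v$ and therefore admits no square root there.

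The second genuine gap is your appeal to slice contractibility of the connected component $U$. This is unjustified --- the paper explicitly remarks, just before Theorem~\ref{final-result}, that no information on the topology of such a component is available --- and it is also unnecessary: because $\beta$ is given by a closed formula rather than by a lifting or monodromy argument, the proof needs no topological hypothesis on $\mathcal{U}$ whatsoever; slice contractibility is used only on the ambient $\Omega$, to produce the factorization $g_v=\alpha W$. Relatedly, your detour through a slice preserving logarithm to manufacture $f_0$ is superfluous and slightly confused: since $g^s\equiv1$, Proposition~\ref{propos1} forces any $*$-logarithm to have $f_0\equiv0$ (or $n\pi\mathcal{J}$ on a product domain), and the paper simply takes $f=f_v=\beta W$ and concludes by $\exp_*(f_v)=\mu(\varphi\circ g_0)+\nu(\varphi\circ g_0)\beta W=g_0+\alpha W=g$.
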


We conclude our investigation by gluing the solutions obtained in Theorem~\ref{thm14}
where $g_v$ has no non-real isolated zeroes with the ``patches" given by
 Theorem~\ref{thm15}, thus obtaining our main theorem. 
 The topological difficulties we have to deal with, prevent us to obtain a clear statement in the general case of a slice contractible domain, so we prefer to give a result which pays something to generality in order to obtain a simpler formulation.
 
\begin{theorem}
Let $\Omega$ be one among $\mathbb{B}$, $\HH$ or $\mathbb{D}$. Let $g\in\So^{1}(\Omega)$ be such that 
\begin{itemize}
\item $g_{v}$ has
a finite number of non-real isolated zeros $\{q_{1},\dots,q_{N}\}$;
\item $g_{0}(q_{\ell})=1$ for all $\ell=1,\dots, N$;
\item the union $\SF_{q_{1}}\cup\dots\cup\SF_{q_{N}}$ is contained in a unique connected
component $\mathcal{U}$ of $\Omega\setminus g_{0}^{-1}((-\infty,-1])$.
\end{itemize}
If for some $I\in\SF$ (and hence for any) the set $\mathcal{U}_{I}^{+}=\mathcal{U}\cap\C_{I}^{+}$ is convex and $\mathcal U$ is slice if $\Omega$ is, then there exists a slice regular $*$-logarithm of $g$. 
\end{theorem}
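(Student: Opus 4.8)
The plan is to combine the global solution on the locus where $g_v$ has no non-real isolated zeroes (Theorem~\ref{thm14}) with the local ``patch'' solutions near the isolated zeroes (Theorem~\ref{thm15}), using the hypotheses to make the gluing unambiguous. First I would reduce to the case $g^s\equiv 1$: by the result quoted at the start of the last section (slice contractibility lets us factor out a slice-preserving square root of $g^s$ and solve for its $*$-logarithm separately), it suffices to produce a $*$-logarithm for $g\in\So^1(\Omega)$ with $g^s\equiv 1$, and the hypothesis $g_0(q_\ell)=1$ is exactly the necessary condition identified before Theorem~\ref{thm15}. Since $\Omega$ is one of $\B$, $\HH$, $\D$, it is slice contractible, so both Theorems~\ref{thm14} and~\ref{thm15} apply.

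Next I would set up the two pieces. Let $\mathcal U$ be the distinguished connected component of $\Omega\setminus g_0^{-1}((-\infty,-1])$ containing $\SF_{q_1}\cup\dots\cup\SF_{q_N}$, and let $\Omega'=\Omega\setminus\{q_1,\dots,q_N,\bar q_1,\dots,\bar q_N\}$ — more precisely $\Omega$ with the isolated zero set of $g_v$ removed in a circularized way — on which $g_v$ has no non-real isolated zeroes. On a neighborhood of this locus Theorem~\ref{thm14} gives a $*$-logarithm $f_1$ (one must check $\Omega'$, or the relevant piece of it, is still slice contractible; for $\B,\HH,\D$ this is where the convexity of $\mathcal U_I^+$ and the global shape of $\Omega$ are used). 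On $\mathcal U$, Theorem~\ref{thm15} gives a $*$-logarithm $f_2$ of $g|_{\mathcal U}$ since $g^s\equiv1$ and $g_0=1$ at every non-real isolated zero of $g_v$ inside $\mathcal U$, and $\mathcal U$ is a connected component of $\Omega\setminus g_0^{-1}((-\infty,-1])$ by construction. The overlap region is $\mathcal U\cap\Omega'$, which is connected precisely because $\mathcal U_I^+$ is convex (removing finitely many points from a convex planar domain keeps it connected, and the slice/product structure transports this to $\HH$), and this connectedness is the crux.

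On the overlap, $\exp_*(f_1)=\exp_*(f_2)=g\notin\So_\R$ in general, so by the uniqueness theorems of Section~\ref{uniqueness-results} the two logarithms differ in a controlled way: $f_1$ and $f_2$ differ by a term of the form $2\pi m H_v$ (slice case) or $\pi(n\mathcal J+mH_v)$ (product case) with $H_v^s\equiv1$, determined by a locally constant integer parameter. Here I would use that the overlap is connected to conclude this integer is a single constant, then absorb it: replace $f_2$ by $f_2 + (f_1-f_2)|_{\text{overlap}}$, which by the explicit form of the discrepancy extends to a genuine slice regular correction on all of $\mathcal U$ (the correcting term $2\pi m H_v$ is itself globally slice regular on $\mathcal U$, being built from $g_v$ and its symmetrized square root, which exists on $\mathcal U$ by Theorem~\ref{thm15}'s construction). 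After this adjustment $f_1=f_2$ on the overlap, so they glue to a single $f\in\So(\mathcal U\cup\Omega')=\So(\Omega)$ with $\exp_*(f)=g$ everywhere — using that $\exp_*$ is computed slicewise and two slice regular functions agreeing on an open set agree everywhere on a connected slice domain, together with the hypothesis that $\mathcal U$ is slice when $\Omega$ is, so that $\mathcal U\cup\Omega'$ recovers $\Omega$.

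The main obstacle I expect is the gluing/connectedness step: ensuring that the discrepancy between $f_1$ and $f_2$ on the overlap is given by a single global integer rather than one that jumps between components of the overlap, and that the resulting correction term is slice regular across all of $\mathcal U$ (including at the $q_\ell$). This is exactly why the statement is restricted to $\B$, $\HH$, $\D$ with $\mathcal U_I^+$ convex: convexity guarantees $\mathcal U\setminus\{q_\ell,\bar q_\ell\}$ stays connected so the integer is unambiguous, and the simple topology of these domains (rather than an arbitrary slice contractible $\Omega$) keeps $\Omega'$ and $\mathcal U$ slice contractible so that Theorems~\ref{thm14} and~\ref{thm15} are actually applicable. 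A secondary technical point is verifying that the two constructions use compatible branches of the auxiliary functions $\mu,\nu$ and of $\sqrt{g_v^s}$ on the overlap, which again reduces to the connectedness of the overlap and the $(\mathrm{mod}\ 2\pi)$ bookkeeping already carried out in Section~\ref{uniqueness-results}.
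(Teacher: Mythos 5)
Your overall strategy (solve away from the bad spheres via Theorem~\ref{thm14}, solve near them via Theorem~\ref{thm15}, reconcile via the uniqueness theorem using connectedness of the overlap) is the right one, but there is a genuine gap at the very first step, and you half-noticed it yourself. You take $\Omega'$ to be $\Omega$ with the circularized zero set $\SF_{q_1}\cup\dots\cup\SF_{q_N}$ removed and want to apply Theorem~\ref{thm14} there. But $\Omega'$ is \emph{never} slice contractible: $\Omega'_I$ (resp.\ $\Omega'^{+}_I$) is $\Omega_I$ minus finitely many points, hence not simply connected, no matter how nice $\Omega$ and $\mathcal U_I^+$ are. Convexity of $\mathcal{U}_I^+$ cannot repair this --- a punctured disc is not simply connected --- so Theorem~\ref{thm14} simply does not apply on $\Omega'$, and your two-piece covering $\Omega=\mathcal U\cup\Omega'$ collapses. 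This is not a technicality to be checked but the central difficulty of the theorem.

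The paper's resolution, which is absent from your proposal, is a \emph{three}-piece covering: one cuts $\Omega_I$ along two different systems of disjoint slits $s_\ell$ and $\sigma_\ell$ emanating from the points $q'_\ell=\SF_{q_\ell}\cap\C_I^+$ out to the boundary, obtaining two circularized slit domains $\widehat\Omega$ and $\widetilde\Omega$ which \emph{are} slice contractible, so that Theorem~\ref{thm14} applies on each; two slit domains are needed because $\mathcal U\cup\widehat\Omega$ alone misses the portion of the slits lying outside $\mathcal U$. One then uses Theorem~\ref{unicity} to correct $\hat h$ and $\tilde h$ by terms $2\pi\widehat m\,\widehat H_v$, $2\pi\widetilde m\,\widetilde H_v$ (or $\pi(n\mathcal J+mH_v)$ in the product case) so that both agree with $f_{\mathcal U}$ on $\widehat{\mathcal U}$, $\widetilde{\mathcal U}$, and finally propagates the equality $\hat f=\tilde f$ to all $N+1$ connected components of $\widehat\Omega\cap\widetilde\Omega$ by the Identity Principle, each component containing accumulation points of $\widehat{\mathcal U}\cap\widetilde{\mathcal U}$ thanks to the convexity of $\mathcal U_I^+$. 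Your instinct that convexity controls the locally constant integers is correct, but without the double-slit construction the domains on which those integers live are not ones where the existence theorem holds, so the argument cannot get started.
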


\section{Preliminary results}\label{preliminaries}

In this section we recall the basic definitions and results we will use in the following. We also prove a couple of basic, yet new, results.

We start with some general fact about quaternions. The real skew algebra of 
quaternions is defined as 
$$
\HH:=\{q=q_{0}+q_{1}i+q_{2}j+q_{3}k\,|\,q_{\ell}\in\R,\, \ell=0,1,2,3,\, i^{2}=j^{2}=k^{2}=-1,\,ij=-ji=k\};
$$
hence, $i,j$ and $k$ are imaginary units. From these three elements it is possible to construct the sphere of imaginary units defined as
$$\SF:=\{\alpha_{1}i+\alpha_{2}j+\alpha_{3}k\,|\,\alpha_{1}^{2}+\alpha_{2}^{2}+\alpha_{3}^{2}=1\}=\{I\in\HH\,|\,I^{2}=-1\}.$$

By means of the standard conjugation $q=q_{0}+q_{1}i+q_{2}j+q_{3}k\mapsto q^{c}=q_{0}-(q_{1}i+q_{2}j+q_{3}k)$, we can split any quaternion in its real and vector part $q=q_{0}+q_{v}$, where $q_{v}=(q-q^{c})/2$. The set of purely imaginary quaternions will be denoted by $Im(\HH)=\{q\in\HH\,|\,q_{0}=0\}$.
Another possible splitting is given as follows: any $q\in\HH\setminus\R$ can be uniquely written 
as $q=\alpha+I\beta$, where $\alpha=q_{0}$, $\beta=||q_{v}||>0$ and $I=q_{v}/||q_{v}||\in\SF$. Such a decomposition gives the fundamental description of $\HH$ underlying the theory of slice regular functions. Before presenting it, we need
to introduce a few more definitions to specify the class of domains where 
slice regular functions are defined.
For any imaginary unit $I\in\SF$ we denote by $\C_{I}$ the complex \textit{slice} 
spanned by $1$ and $I$ over the reals, i.e.
$$
\C_{I}:=\text{Span}_{\R}(1,I).
$$
We will also consider closed \textit{semislices}
$$\C_{I}^{+}=\{\alpha+I\beta\,|\,\alpha,\beta\in\R,\,\beta\geq 0\},$$
and, for any $q\in\HH\setminus\R$  we denote by $\C_{q}$ the unique slice containing $q$.
Moreover, if $\Omega\subset \HH$, then we write $\Omega_{I}=\Omega\cap\C_{I}$ and $\Omega_{I}^{+}=\Omega\cap \C_{I}^{+}$.
  
Given any $q\in\HH$, we define its sphere $\SF_{q}$ as 
$$
\SF_{q}:=\{q_{0}+||q_{v}||J\,|\,J\in\SF\},
$$  
where, trivially, if $q\in\R$, then $q_{v}=0$ and $\SF_{q}=\{q\}$.
For any $\Omega\subset \HH$, its \textit{circularization} is given by
$$
\bigcup_{q\in\Omega}\SF_{q};
$$
a domain $\Omega\subset\HH$ will be called \textit{circular} if it coincides with its circularization. A circular domain is said to be \textit{slice} if $\Omega\cap\R\neq \emptyset$, while is called \textit{product} if $\Omega\cap\R=\emptyset$.
Notice that if $\Omega$ is a product domain, then $\Omega=\Omega_{I}^{+}\times \SF$, for any $I\in\SF$, thus explaining the origine of its naming.

We are now ready to give our main topological definition.
\begin{definition}
A circular subset $\Omega\subset\HH$ is said to be \textit{slice contractible}
if, for some $I\in\SF$ (and then any), $\Omega_{I}$ is a simply connected domain
if $\Omega$ is slice, and $\Omega_{I}^{+}$ is a simply connected domain if
$\Omega$ is product.
\end{definition}

From now on we denote by $\mathbb{B}\subset\HH$
 the open unit ball centered at the origin and by $\mathbb{D}$ the ``solid torus'' 
$$
\mathbb{D}=\{\alpha+I\beta\in\HH\,|\,\alpha^{2}+(\beta-2)^{2}<1,\,I\in\SF\},
$$
which are both slice contractible domains, the first one slice and the second one product. 
\begin{remark}
We notice that for a slice domain $\Omega$ being simply connected does not mean being slice contractible, in general. Indeed, if $\Omega=\mathbb{B}\setminus\{0\}$, 
then $\Omega$ is simply connected because it retracts on $S^{3}$, while $\Omega_{I}$ is not simply connected because it is a ``flat'' punctured disc in $\C_{I}$.
\end{remark}

The following definitions identify the functions we will work with (see~\cite{G-P} for an introduction to the topic).

\begin{definition}
Let $\Omega$ be a circular domain in $\HH$ and set $D=\{\alpha+\sqrt{-1}\beta\,|\,\alpha+I\beta\in\Omega\}\subset\C$.
We say that a function $F:D\to\HH\otimes_{\R} \C\simeq\HH\oplus\imath\HH$ is a {\sl stem} function if 
the equality $F(\overline z)=\overline{F(z)}$, where $\overline{p+\imath q}=p-\imath q$, holds for any $p+\imath q\in\HH\otimes_{\R} \C$.
A {\sl slice} function $f:\Omega\to \HH$ is a function induced by a stem function $F=F_0+\imath F_1:D\to\HH\otimes_{\R} \C$ in the following way:
$f(\alpha+\beta I)=F_0(\alpha+\sqrt{-1} \beta)+I F_1(\alpha+\sqrt{-1} \beta)$. Such a function will also be denoted by $f=\mathcal I(F)$.
\end{definition}

Ghiloni and Perotti~\cite{G-P} proved that a slice function $f$ is induced by a unique stem function $F = F_{0} + \imath F_{1}$, given by 
$F_{0}(\alpha+\sqrt{-1} \beta) = \frac{1}{2}(f(\alpha+I\beta)+f(\alpha-I\beta))$ and $F_{1}(\alpha+\sqrt{-1} \beta) = -\frac{1}{2}I(f(\alpha+I\beta)-f(\alpha-I\beta))$ for
any $I \in \SF$.
In particular, if $f=\mathcal{I}(F=F_{0}+\imath F_{1})$ we define its \textit{slice conjugate} as the function $f^{c}:=\mathcal{I}(F^{c}):\Omega\to\HH$, where $F^{c}=F_{0}^{c}+\imath F_{1}^{c}$, where $F_{\ell}^{c}(q)=(F_{\ell}(q))^{c}$ for $\ell=0,1$.

\begin{definition}
Let $\Omega\subset \HH$ be a circular domain. A {\sl slice} function $f=\mathcal I(F):\Omega\to \HH$ is slice {\sl regular} if $F$ is holomorphic with respect to the natural complex structures of $\C$ and $\HH\otimes_{\R} \C$. We denote by $\So(\Omega)$ the set of all slice regular functions on $\Omega$ with its natural structure of right $\HH$-module.
\end{definition}

If $\Omega$ is slice, the notion of slice regularity coincides with \textit{Cullen regularity} (see~\cite{G-S-St}). A useful result for slice-regular functions is the following

\begin{proposition}[Representation Formula] Let $f\in\So(\Omega)$ and let $\alpha+\beta J\in\Omega$. For all $I\in\SF$ we have 
$$
f(\alpha+J\beta)=\frac{1-JI}2f(\alpha+I\beta)+\frac{1+JI}2f(\alpha-I\beta). 
$$
\end{proposition}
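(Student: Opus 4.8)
The plan is to prove the Representation Formula directly from the stem-function description of slice functions, using nothing beyond the definitions already recorded. Since a slice regular function is in particular a slice function, and the asserted identity involves no derivatives, I would prove it for an arbitrary slice function $f=\mathcal I(F)$ with $F=F_{0}+\imath F_{1}$; regularity plays no role. The first step is to note that circularity of $\Omega$ guarantees that all three points $\alpha+J\beta$, $\alpha+I\beta$ and $\alpha-I\beta$ lie in $\Omega$: they all belong to the common sphere $\SF_{\alpha+J\beta}\subset\Omega$, so the right-hand side is well defined for every $I\in\SF$.

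The key observation is that the two components of the inducing stem function are independent of the imaginary unit used to read them off $f$. Concretely, by the Ghiloni--Perotti formulas recalled above, for every $I\in\SF$ one has
$$
F_{0}(\alpha+\sqrt{-1}\beta)=\tfrac12\bigl(f(\alpha+I\beta)+f(\alpha-I\beta)\bigr),\qquad F_{1}(\alpha+\sqrt{-1}\beta)=-\tfrac12\, I\bigl(f(\alpha+I\beta)-f(\alpha-I\beta)\bigr).
$$
These expressions compute one and the same pair of quaternions $F_{0},F_{1}$, no matter which slice $\C_{I}$ we evaluate on; this is precisely the content of the uniqueness of the stem function inducing $f$.

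With this in hand the argument is a one-line substitution. Evaluating the defining relation $f(\alpha+J\beta)=F_{0}(\alpha+\sqrt{-1}\beta)+J\,F_{1}(\alpha+\sqrt{-1}\beta)$ and inserting the formulas above, computed on the slice $\C_{I}$, gives
$$
f(\alpha+J\beta)=\tfrac12\bigl(f(\alpha+I\beta)+f(\alpha-I\beta)\bigr)-\tfrac12\, JI\bigl(f(\alpha+I\beta)-f(\alpha-I\beta)\bigr).
$$
Grouping the left coefficients of $f(\alpha+I\beta)$ and of $f(\alpha-I\beta)$ yields $\tfrac{1-JI}{2}f(\alpha+I\beta)+\tfrac{1+JI}{2}f(\alpha-I\beta)$, which is the claimed identity.

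There is essentially no obstacle in this proof: the whole content sits in the uniqueness of the stem function, which is what licenses mixing the unit $J$ appearing in the definition of $f$ with a \emph{different} unit $I$ used to recover $F_{0}$ and $F_{1}$. The only points to keep straight are bookkeeping: that the quaternionic coefficients are written on the left of the function values, consistent with the stated formula, and that the degenerate real case $\beta=0$, where every sphere collapses to the single point $\alpha$, reduces the identity to the triviality $f(\alpha)=f(\alpha)$.
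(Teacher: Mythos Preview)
The paper does not actually prove the Representation Formula: it is stated in Section~\ref{preliminaries} as a known preliminary result, quoted from the literature (see~\cite{G-S-St,G-P}), with no accompanying argument. There is therefore no proof in the paper to compare your attempt against.

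That said, your proof is correct and is the natural one in the stem-function framework the paper adopts. You rightly observe that regularity is irrelevant and that the identity holds for any slice function; the entire content is the uniqueness of the inducing stem function, which the paper records just before the statement. Your computation is clean and the bookkeeping (left coefficients, the degenerate case $\beta=0$, circularity ensuring all three points lie in $\Omega$) is handled correctly.
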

A useful consequence of the Representation Formula is the fact that if $f_{I}:\Omega_{I}\to\HH$  is a holomorphic function with respect to the left multiplication by $I$, then there exists a unique slice-regular function $f:\Omega\to\HH$
such that $f_{I}=f_{|\Omega_{I}}$.  Such a function $\text{ext}(f_{I})$ will be called the {\sl regular extension of $f_I$}, (see~\cite{G-S-St}). From now on if 
$f\in\So(\Omega)$ we denote by $f_{I}$ its restriction to $\Omega_{I}$. 

The strong relation between holomorphicity and slice regularity appears also
in the following result obtained by merging \cite[Theorem 1.12]{G-S-St} and \cite[Theorem 3.6]{A-CVEE}.
\begin{proposition}[Identity Principle]
Given $f,g\in\So(\Omega)$
\begin{itemize}
\item if $\Omega$ is slice and for some $I\in\SF$, the functions $f$ and $g$ coincide on a subset of $\Omega_{I}$ having an accumulation point, then $f\equiv g$;
\item if $\Omega$ is product and for some $I\in\SF$, the functions $f$ and $g$ coincide on a subset of $\Omega_{I}$ having an accumulation point in $\Omega_{I}^{+}$ and an accumulation point in $\Omega_{I}^{-}$, then $f\equiv g$.
\end{itemize}
\end{proposition}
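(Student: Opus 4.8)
The plan is to reduce to the normalized case and then cover $\Omega$ by two slice contractible pieces on which Theorems~\ref{thm15} and~\ref{thm14} respectively produce a $*$-logarithm, gluing the two after correcting one of them by an explicit period. Since $g\in\So^{1}(\Omega)$ we have $g^{s}\equiv1$, so $g$ is never vanishing, and by the reduction recalled at the beginning of Section~\ref{the-core} this is the case to which everything reduces. I would first observe that the hypotheses on $\mathcal{U}$ are tailored to make $\mathcal{U}$ itself a slice contractible domain: when $\Omega$ is slice, $\mathcal{U}$ is slice and $\mathcal{U}_{I}$ is the union of the convex set $\mathcal{U}_{I}^{+}$ with its reflection across $\R$, hence simply connected; when $\Omega$ is product, $\mathcal{U}$ is product and $\mathcal{U}_{I}^{+}$ is convex, hence simply connected. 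As all the spheres $\SF_{q_{\ell}}$ and the finite set $\{q_{1},\dots,q_{N}\}$ lie in $\mathcal{U}$, and $g_{0}(q_{\ell})=1$, Theorem~\ref{thm15} applies on $\mathcal{U}$ and yields a slice regular $*$-logarithm $L$ of $g|_{\mathcal{U}}$, built from the inverse $\varphi$ of $\mu$ and resolving the isolated zeroes of $g_{v}$.

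For the complementary patch I would choose a second slice contractible domain $\mathcal{V}\subset\Omega$ that contains the obstruction set $g_{0}^{-1}((-\infty,-1])$ together with the remaining connected components of $\Omega\setminus g_{0}^{-1}((-\infty,-1])$, overlaps $\mathcal{U}$ in a collar, but stays away from the points $q_{\ell}$ (which sit in the convex core $\mathcal{U}_{I}^{+}$). Since every non-real isolated zero of $g_{v}$ is one of the $q_{\ell}$, the restriction $g|_{\mathcal{V}}$ has vector part with no non-real isolated zero, so Theorem~\ref{thm14} furnishes a slice regular $*$-logarithm $M$ of $g|_{\mathcal{V}}$. Note that $\mathcal{V}$ is permitted to meet the obstruction set precisely because Theorem~\ref{thm14}, unlike the $\varphi$-construction behind Theorem~\ref{thm15}, imposes no condition on $g_{0}$; this is what lets $M$ cross the region where the patch $L$ cannot be continued.

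The heart of the argument is the gluing on the overlap $\mathcal{U}\cap\mathcal{V}$. There both $L$ and $M$ satisfy $\exp_{*}(\,\cdot\,)=g$ and $g_{v}$ has no non-real isolated zero, so the uniqueness results of Section~\ref{uniqueness-results} apply and force $L-M$, on each connected component of the overlap, to be one of the explicit periods described there: in the slice case $2\pi m\,H_{v}$ with $H_{v}^{s}\equiv1$ and $m\in\Z$, and in the product case $2\pi n\mathcal{J}$ or $\pi(n\mathcal{J}+mH_{v})$. Since $\exp_{*}$ of such a period is $\equiv1$, adding the matching period to $M$ produces again a $*$-logarithm of $g$ on $\mathcal{V}$; if this correction can be taken to be \emph{one and the same} on every component of the collar, the corrected $M$ coincides with $L$ on the whole overlap, and the two define a single slice regular function on $\Omega=\mathcal{U}\cup\mathcal{V}$, which is the desired global $*$-logarithm of $g$.

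I expect the main obstacle to be exactly the consistency of this period correction, not the local existence on $\mathcal{U}$ and $\mathcal{V}$. A priori the collar need not be connected, and the uniqueness theorems only control $L-M$ component by component, so different components could demand incompatible integers $m$ (or $n$); moreover one must ensure that $\mathcal{V}$ can be chosen simply connected at all, i.e. that the obstruction set does not encircle the zeroes $q_{\ell}$. This is where the restrictive hypotheses enter: the convexity of $\mathcal{U}_{I}^{+}$, the fact that all zeroes lie in the single component $\mathcal{U}$, and the special geometry of $\B$, $\HH$ and $\D$ guarantee that $g_{0}^{-1}((-\infty,-1])$ sits on ``one side'' of the convex core, so that $\mathcal{V}$ is simply connected, the collar is connected, and the period is globally constant. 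Verifying this topological compatibility, and that the general slice contractible case genuinely fails it, is the delicate point and explains why the clean statement is confined to $\B$, $\HH$ and $\D$.
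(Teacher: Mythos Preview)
Your proposal does not address the stated Proposition at all. The statement you were asked to prove is the Identity Principle for slice regular functions: if two slice regular functions agree on a subset of $\Omega_I$ with the required accumulation points, then they coincide on $\Omega$. What you have written is instead a proof sketch for Theorem~\ref{final-result}, the global existence result for a $*$-logarithm on $\mathbb{B}$, $\HH$, or $\mathbb{D}$. Nothing in your proposal --- the reduction to $g^{s}\equiv1$, the patches $\mathcal{U}$ and $\mathcal{V}$, the invocation of Theorems~\ref{thm14} and~\ref{thm15}, the period analysis via Section~\ref{uniqueness-results} --- bears on the Identity Principle; indeed, the Identity Principle is one of the basic tools you would be \emph{using} in such an argument, not the conclusion.

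For the record, the paper does not supply its own proof of the Identity Principle either: it is quoted as a background result, obtained by merging \cite[Theorem 1.12]{G-S-St} (for slice domains) and \cite[Theorem 3.6]{A-CVEE} (for product domains). A correct proof would go roughly as follows: on $\Omega_I$ the restriction $f_I-g_I$ splits, for any $J$ orthogonal to $I$, as $F+GJ$ with $F,G$ holomorphic from $\Omega_I$ to $\C_I$; the accumulation-point hypothesis and the classical identity principle force $F\equiv G\equiv 0$ on the relevant connected component(s) of $\Omega_I$, and the Representation Formula then propagates $f\equiv g$ to all of $\Omega$. In the product case one needs accumulation in both $\Omega_I^{+}$ and $\Omega_I^{-}$ precisely because $\Omega_I$ has two connected components.
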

 
The following two classes of regular functions are of particular interest for the theory.
\begin{definition}\label{slice-preserving-definition}
Given $f\in\So(\Omega)$, we say that $f$ is \textit{one slice preserving} if 
$f(\Omega_{I})\subset\C_{I}$ for some $I\in\SF$ and $f$ is \textit{slice preserving} 
if $f(\Omega_{I})\subset\C_{I}$ for all $I\in\SF$.  The set of slice preserving function on $\Omega$ will be denoted by $\So_{\R}(\Omega)$, while the set of functions that preserve $\Omega_{I}$ will be denoted by $\So_{I}(\Omega)$.
\end{definition}


\begin{remark}
Notice that a function $f$ is slice preserving if and only if it preserves two different slices, if and only if it is intrinsic, i.e., $f(q^{c})=(f(q))^{c}$, for all $q$ in its domain $\Omega$.
Moreover, if $\Omega$ is slice, $f$ is slice preserving if and only if 
$f(\Omega\cap\R)\subset\R$.
\end{remark}

We now introduce an interesting function with will be widely used in the course
of the paper.
\begin{definition}\label{mathcalJ}
Let us define the regular (slice preserving) function
$\mathcal{J}:\HH\setminus\R\to \SF\subset\HH$ as
$$
\mathcal{J}(q_{0}+q_{v})=\frac{q_{v}}{||q_{v}||}.
$$
\end{definition}
Equivalently if $q=\alpha+I\beta\in\HH\setminus\R$, with $\beta>0$, then $\mathcal{J}(q)=I$.

We now turn to the algebraic structure of $\So(\Omega)$.
It is a well known fact that in general the pointwise product of two slice-regular functions is no more slice. Nonetheless, this problem can be overcome by defining the following non-commutative product (see~\cite{C-G-S-St}).

\begin{definition}
Let $f,g\in\So(\Omega)$. We define the $*$-\textit{product} of $f$ and $g$ as the slice regular function $f*g:\Omega\to\HH$ given by
$$(f*g)(q):=\begin{cases}
0,&\mbox{if } f(q)=0,\\
f(q)g(f(q)^{-1}qf(q)),&\mbox{if }f(q)\neq 0.
\end{cases}
$$
\end{definition}

In some specific case, the $*$-product coincides with the pointwise product and in some more it turns to be commutative.
\begin{remark}
Let $f,g\in\So(\Omega)$. 
If $\Omega$ is slice, then $f*g=f\cdot g$ on $\Omega\cap\R$.
If $f$ is slice preserving, then $f*g=g*f=fg$. 
If 
both functions preserve the same slice $\Omega_{I}$, then $f*g=\text{ext}(f_{I}\cdot g_{I})=\text{ext}(g_{I}\cdot f_{I})=g*f$.
\end{remark}

Given any slice regular function $f\in\So(\Omega)$, we then introduce its
\textit{symmetrized function} $f^{s}\in\So_{\R}(\Omega)$ defined as $f^{s}=f*f^{c}=f^{c}*f$.

The importance of the symmetrized function relies mainly in its connection with 
the zero set of $f$: in particular, the function $f$ is a zero divisor if and only if $f^{s}\equiv 0$ and the fact that $f$ has a non real isolated zero $q$, entails $f^{s}\equiv 0$ on $\SF_{q}$.

The symmetrized function allows us to define the $*$-inverse of a regular function.
Given $f\in\So(\Omega)$ with $f^{s}\not\equiv 0$, we define its $*$\textit{-inverse} as
$f^{-*}=(f^{s})^{-1}f^{c}$ which is slice regular outside the zero set of $f^{s}$.
In the following we will extensively use the following zero-product property~\cite[Proposition 5.18]{GPSalgebra}.
\begin{proposition}[Ghiloni-Perotti-Stoppato]\label{propGPS} Let $f,g\in\So(\Omega)$ be such that 
$f^{s}\not\equiv 0$. Then, $f*g\equiv 0$ implies $g\equiv 0$. In particular
\begin{itemize}
\item $f\in\So(\Omega)\setminus\{0\}$ is a zero divisor if and only if $f^{s}\equiv 0$;
\item if $\Omega$ is slice, then $\So(\Omega)$ is a domain;
\item if $f\in\So_{\R}(\Omega)\setminus\{0\}$, then $fg\equiv 0$ implies $g\equiv 0$.
\end{itemize}
\end{proposition}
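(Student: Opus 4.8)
The plan is to prove the proposition by producing a left $*$-inverse of $f$ on the (dense) region where $f^s$ does not vanish, using it to cancel $f$, and then propagating the conclusion $g\equiv0$ to all of $\Omega$ by density. The whole argument rests on the $*$-inverse $f^{-*}=(f^s)^{-1}f^c$ introduced above, together with the associativity of the $*$-product and the fact that $(f^s)^{-1}$, being slice preserving, $*$-multiplies as an ordinary pointwise factor.

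First I would record the elementary but crucial fact that, when $f^s\not\equiv0$, the set $Z=\{q\in\Omega : f^s(q)=0\}$ is a closed circular subset with empty interior, so that $\Omega':=\Omega\setminus Z$ is an open, dense, circular subdomain. Indeed $f^s\in\So_{\R}(\Omega)$ is slice preserving, hence induced by a stem function with real components; its restriction $f^s_{I}$ to any slice $\C_{I}$ is then a $\C_{I}$-valued holomorphic function, and $f^s\not\equiv0$ forces $f^s_{I}\not\equiv0$ for every $I$, since a slice-preserving function is determined by its stem function. The zeros of $f^s_{I}$ are therefore isolated in $\C_{I}$, and as $Z$ is the union of the spheres $\SF_{q}$ over these isolated points it has empty interior. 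On $\Omega'$ the function $f^{-*}=(f^s)^{-1}f^c$ is well defined and slice regular; because $(f^s)^{-1}$ is slice preserving it factors out of the $*$-product, and using $f^c*f=f^s$ one gets
\[
f^{-*}*f=(f^s)^{-1}\,(f^c*f)=(f^s)^{-1}f^s=1\quad\text{on }\Omega'.
\]
Left-$*$-multiplying the hypothesis $f*g\equiv0$ by $f^{-*}$ and using associativity of the $*$-product on $\So(\Omega')$ then yields
\[
0=f^{-*}*(f*g)=(f^{-*}*f)*g=1*g=g\quad\text{on }\Omega'.
\]
Since $g$ is continuous and $\Omega'$ is dense in $\Omega$, this gives $g\equiv0$ (alternatively one may invoke the Identity Principle).

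For the three consequences I would argue as follows. For the characterization of zero divisors, if $f^s\equiv0$ then $f*f^c=f^s\equiv0$ with $f^c\not\equiv0$ (slice conjugation is an involution), so $f$ is a zero divisor; conversely a relation $f*g\equiv0$ with $g\not\equiv0$ forces $f^s\equiv0$ by the contrapositive of the main statement, while a relation $g*f\equiv0$ gives, after slice conjugation, $f^c*g^c\equiv0$, and since $(f^c)^s=f^c*f=f^s$ the same argument applies. If $\Omega$ is slice, then for real $x$ one has $f^s(x)=f(x)\,f(x)^c=|f(x)|^2$, so $f^s\equiv0$ would give $f\equiv0$ on $\Omega\cap\R$ and hence $f\equiv0$ by the Identity Principle; thus every nonzero $f$ has $f^s\not\equiv0$, and the proposition shows $\So(\Omega)$ has no zero divisors, i.e.\ it is a domain. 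Finally, if $f\in\So_{\R}(\Omega)$ then $f^c=f$ and $fg=f*g$, so $f^s=f*f=f^2$; since each $f_{I}$ is a nonzero holomorphic function with isolated zeros, $f^s=f^2\not\equiv0$, and the proposition applied to $f*g\equiv0$ gives $g\equiv0$.

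The only genuinely delicate point is the density step: the entire reduction works only because $Z$ has empty interior, so that vanishing on $\Omega'$ propagates to all of $\Omega$. This is precisely where the hypothesis $f^s\not\equiv0$ is used, and it is what upgrades $f^{-*}$ from a merely generically defined object to a true left inverse on a dense set. Once this is in place the algebraic cancellation is immediate, the remaining work being only the bookkeeping that $(f^s)^{-1}$ commutes through the $*$-product and that $f^c*f=f^s$.
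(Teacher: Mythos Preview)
The paper does not prove this proposition: it is quoted verbatim from \cite[Proposition 5.18]{GPSalgebra} and attributed to Ghiloni, Perotti and Stoppato, with no argument supplied here. There is therefore nothing in the paper to compare your attempt against.

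That said, your proof is correct and is essentially the standard one. The core mechanism---inverting $f$ on the dense circular open set $\Omega'=\Omega\setminus Z(f^s)$ via $f^{-*}=(f^s)^{-1}f^c$, using associativity and the fact that slice-preserving factors pass through the $*$-product as pointwise multipliers, and then propagating $g|_{\Omega'}\equiv0$ to all of $\Omega$ by continuity---is exactly how the zero-product property is established. Your derivation of the three corollaries is also right; in particular the treatment of right zero divisors by conjugating $g*f\equiv0$ to $f^c*g^c\equiv0$ and noting $(f^c)^s=f^s$ is the clean way to close that case. One minor point worth tightening: the claim ``$f^s\not\equiv0$ forces $f^s_I\not\equiv0$ for every $I$'' deserves a word more in the product-domain case, since $\Omega_I$ then has two connected components; the intrinsic symmetry $f^s(q^c)=(f^s(q))^c$ ties the two half-slices together and rules out vanishing on only one of them. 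This is implicit in your stem-function remark but is the step a careful reader would want to see.
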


Now assume $(1,I,J,K)$ is an orthonormal basis of $\HH$, thanks to 
\cite[Proposition 3.12]{C-GC-S} and~\cite[Lemma 6.11]{G-M-P}, any slice regular function
$f\in\So(\Omega)$ can be uniquely written as a sum
$f=f_{0}+f_{1}I+f_{2}J+f_{3}K$, where $f_{0},\dots, f_{3}\in\So_{\R}(\Omega)$,
thus giving to $\So(\Omega)$ the structure of a $4$-rank free module on $\So_{\R}(\Omega)$. For the convenience of what follows 
we call
$f_{0}$ the \textit{``real part''} of $f$ and $f_{v}=f-f_{0}$ the  \textit{``vector part''} of $f$.
We also introduce the following two operators: let $f, g\in\So(\Omega)$, then
$$
\langle f,g\rangle_{*}:=(f*g^{c})_{0},\qquad f\pv g:=\frac{f*g-g*f}{2}.
$$

The following result summarizes a series of properties of the $*$-product obtained via the above interpretation of the multiplicative structure of $\So(\Omega)$ (see~\cite{A-dF}).
\begin{proposition}\label{product-properties}
Let $f=f_{0}+f_{1}I+f_{2}J+f_{3}K, g=g_{0}+g_{1}I+g_{2}J+g_{3}K\in\So(\Omega)$. Then
\begin{itemize}
\item $f^{c}=f_{0}-(f_{1}I+f_{2}J+f_{3}K)$, $f_{0}=\frac{f+f^{c}}{2}$ and
$f_{v}=\frac{f-f^{c}}{2}$;
\item $f*g=f_{0}g_{0}-\langle f_{v},g_{v}\rangle_{*}+f_{0}g_{v}+g_{0}f_{v}+f_{v}\pv g_{v}$;
\item $f^{s}=f_{0}^{2}+f_{1}^{2}+f_{2}^{2}+f_{3}^{2}$; in particular $f^{s}\geq 0$ on $\Omega\cap \R$;
\item $f_{v}*f_{v}=-f_{v}*(-f_{v})=-f_{v}*f_{v}^{c}=-f_{v}^{s}$.
\end{itemize}
\end{proposition}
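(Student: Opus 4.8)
The plan is to verify each of the four bullet points directly from the definitions, exploiting the fact that $\So_\R(\Omega)$ is a commutative subalgebra of $\So(\Omega)$ on which the $*$-product reduces to the pointwise product, and that the coefficients $f_0,\dots,f_3$ and $g_0,\dots,g_3$ all lie in $\So_\R(\Omega)$. Throughout I would freely use the remark recorded earlier that for a slice preserving factor the $*$-product is commutative and coincides with the pointwise product, so that each $f_\ell$ (resp.\ $g_\ell$) commutes with everything under $*$.

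First I would treat the first bullet. By definition the slice conjugate acts on the stem function by conjugating the $\HH$-values; since $f_0$ is the real part and $f_1I+f_2J+f_3K$ the vector part (with $f_\ell$ slice preserving, hence fixing the basis $1,I,J,K$ up to the conjugation $I\mapsto -I$ of the units), one gets $f^c=f_0-(f_1I+f_2J+f_3K)$. The two formulas $f_0=\tfrac{f+f^c}2$ and $f_v=\tfrac{f-f^c}2$ are then immediate by adding and subtracting $f$ and $f^c$. I would present this as a short computation on the decomposition $f=f_0+f_1I+f_2J+f_3K$.

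Next, for the second bullet I would expand $f*g$ using bilinearity of $*$ over $\So_\R(\Omega)$ and the slice preserving property of the coefficients, which lets all scalar factors $f_\ell,g_\ell$ pass through $*$ as ordinary multiplication. Writing $f=f_0+f_v$ and $g=g_0+g_v$, I get $f*g=f_0g_0+f_0g_v+g_0f_v+f_v*g_v$, so the work reduces to the vector--vector term $f_v*g_v$. Splitting $f_v*g_v$ into its symmetric and antisymmetric parts via $f_v*g_v=\tfrac12(f_v*g_v+g_v*f_v)+\tfrac12(f_v*g_v-g_v*f_v)$, the antisymmetric half is exactly $f_v\pv g_v$ by definition, while I must identify the symmetric half with $-\langle f_v,g_v\rangle_*$. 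For this I would use that $\langle f_v,g_v\rangle_*=(f_v*g_v^c)_0=-(f_v*g_v)_0$ since $g_v^c=-g_v$, together with the fact that the symmetric part of $f_v*g_v$ is scalar (it equals its own real part because the antisymmetric part carries the whole vector component); combining these gives the claimed $-\langle f_v,g_v\rangle_*$ term.

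Finally, the third and fourth bullets follow from the second by specialization. Taking $g=f^c$ in the product formula, and using $f^c=f_0-f_v$ from the first bullet, gives $f^s=f*f^c=f_0^2+\langle f_v,f_v\rangle_*+\text{(vector terms that cancel)}$; unwinding $\langle f_v,f_v\rangle_*=(f_v*f_v^c)_0$ and the orthonormality of $1,I,J,K$ yields $f^s=f_0^2+f_1^2+f_2^2+f_3^2$, and positivity on $\Omega\cap\R$ is clear since each $f_\ell$ is real-valued there. For the fourth bullet I would set $g_v=f_v$ (so the antisymmetric part $f_v\pv f_v$ vanishes) to read off $f_v*f_v=-\langle f_v,f_v\rangle_*=-f_v*f_v^c=-f_v^s$, and the middle equality $-f_v*(-f_v)$ is just a sign bookkeeping. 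The only genuinely delicate point—\emph{the main obstacle}—is the identification of the scalar symmetric part of $f_v*g_v$ with $-\langle f_v,g_v\rangle_*$: I must argue carefully that $\tfrac12(f_v*g_v+g_v*f_v)$ is slice preserving (equivalently, has zero vector part) so that it equals its own real part $(f_v*g_v)_0$, and this is where the structure of the $*$-product on pure vector parts, rather than mere bilinearity, is needed.
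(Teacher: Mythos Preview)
Your proposal is correct. The paper does not give its own proof of this proposition: it simply states the result and refers to~\cite{A-dF}, so there is no argument in the paper to compare with yours.

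One small remark on what you flag as the ``main obstacle'': the identification of $\tfrac12(f_v*g_v+g_v*f_v)$ with $(f_v*g_v)_0$ follows in one line from the standard identity $(a*b)^c=b^c*a^c$ together with $f_v^c=-f_v$, $g_v^c=-g_v$ (which you have from the first bullet). Indeed, $(f_v*g_v)^c=g_v^c*f_v^c=(-g_v)*(-f_v)=g_v*f_v$, so
\[
(f_v*g_v)_0=\frac{f_v*g_v+(f_v*g_v)^c}{2}=\frac{f_v*g_v+g_v*f_v}{2},
\]
which is exactly the symmetric part. This avoids having to expand in coordinates and makes the argument fully self-contained.
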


Following~\cite{C-S-St-2} we define the following operators on $\So(\Omega)$.

\begin{definition}
Let $f\in\So(\Omega)$. We set
$$\exp_*(f)=\sum_{n\in\mathbb{N}}\frac{f^{*n}}{n!},\quad \cos_*(f)=\sum_{n\in\mathbb{N}}\frac{(-1)^{n}f^{*(2n)}}{(2n)!},\quad\sin_*(f)=\sum_{n\in\mathbb{N}}\frac{(-1)^{n}f^{*(2n+1)}}{(2n+1)!}.$$
\end{definition}

\begin{remark}\label{caso-slice-pres}
If $\Omega$ is slice, then $\exp_{*}(f)=\exp\circ f$ on $\Omega\cap\R$ and the same holds for $\cos_{*}$ and $\sin_{*}$, i.e. $\cos_{*}(f)=\cos\circ f$ and $\sin_{*}(f)=\sin\circ f$ on $\Omega\cap \R$.
If $f\in\So_{\R}(\Omega)$ then
$\exp_{*}(f)=\exp\circ f= \exp(f)$,
$\cos_{*}(f)=\cos\circ f= \cos(f)$ and $\sin_{*}(f)=\sin\circ f= \sin(f)$.
Moreover, if $f\in\So_{I}(\Omega)$ then $\exp_{*}(f)=\text{ext}(\exp(f_I))$,
$\cos_{*}(f)=\text{ext}(\cos(f_I))$ and $\sin_{*}(f)=\text{ext}(\sin(f_I))$.
\end{remark}

We introduce the following definition in order to restate some of the contents of~\cite{A-dF}.

\begin{definition}\label{definizione-mu-nu}
We denote by $\mu,\nu:\HH\to\HH$ the following slice preserving entire functions
$$
\mu(q)=\sum_{n\in\mathbb{N}}\frac{(-1)^{n}q^{m}}{(2m)!},\qquad
\nu(q)=\sum_{n\in\mathbb{N}}\frac{(-1)^{n}q^{m}}{(2m+1)!}.
$$
\end{definition}

\begin{remark}
We notice that 
$$\mu(q^{2})=\cos(q),\qquad\nu(q^{2})q=\sin(q),$$ for all $q\in\HH$.
In particular, 
$$\mu(\pi^{2}n^{2})=(-1)^{n},\, \text{for all}\, n\in\mathbb{Z},\quad \nu(0)=1\quad
\text{and}\quad\nu(\pi^{2}n^{2})=0\, \text{if}\, n\in\mathbb{Z}\setminus\{0\}.$$
Moreover, for any $q\in\HH$, the following equality holds
\begin{equation}\label{sommaquadrati}
\mu^{2}(q)+\nu^{2}(q)q\equiv 1.
\end{equation}
Indeed for any $q\in\R^{+}$, choose $t\in\R\setminus\{0\}$ such that $q=t^{2}$, then $\mu(q)=\mu(t^{2})=\cos(t)$, $\nu(q)=\nu(t^{2})=\frac{\sin(t)}{t}$. Thus $\mu^{2}(q)+\nu^{2}(q)q=\cos^{2}(t)+\frac{\sin^{2}(t)}{t^{2}}t^{2}=\cos^{2}(t)+\sin^{2}(t)=1$. By the Identity Principle we are done.
\end{remark}

Next proposition collects several features of the $*$-exponential (see~\cite{A-dF},
where a slightly different notation is used).

\begin{proposition}
Let $f,g\in\So(\Omega)$, then we have the following equalities
\begin{align}
\exp_{*}(-f)&=(\exp_{*}(f))^{-*};\label{inverse}\\
(\exp_{*}(f))^{s}&=\exp(2f_{0});\nonumber\\
\label{munu}
\exp_{*}(f)&=\exp_{*}(f_{0})\left(\mu(f_{v}^{s})+\nu(f_{v}^{s})f_{v}\right);\\
\exp_{*}(f+g)&=\exp_{*}(f)*\exp_{*}(g),\qquad \mbox{if}\quad f*g=g*f.\nonumber
\end{align}
\end{proposition}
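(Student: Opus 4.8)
The plan is to treat the four identities in an order that lets each build on its predecessors; throughout I take for granted (from the foundational references \cite{A-dF,C-S-St-2}) that the series defining $\exp_*$ converges normally on compact subsets of $\Omega$, so that the term-by-term rearrangements and Cauchy products used below are legitimate. I would start with the \emph{addition formula} $\exp_*(f+g)=\exp_*(f)*\exp_*(g)$ under the hypothesis $f*g=g*f$. Because $f$ and $g$ $*$-commute, an elementary induction gives the $*$-binomial theorem $(f+g)^{*n}=\sum_{k=0}^n\binom{n}{k}f^{*k}*g^{*(n-k)}$; summing over $n$ and rearranging as a Cauchy product (justified by normal convergence) immediately yields $\exp_*(f+g)=\big(\sum_k f^{*k}/k!\big)*\big(\sum_j g^{*j}/j!\big)=\exp_*(f)*\exp_*(g)$.

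Next I would derive \eqref{munu}. Writing $f=f_0+f_v$, the real part $f_0\in\So_\R(\Omega)$ is slice preserving, hence $f_0*f_v=f_v*f_0$, so the addition formula applies to give $\exp_*(f)=\exp_*(f_0)*\exp_*(f_v)$; moreover $\exp_*(f_0)=\exp(f_0)$ by Remark~\ref{caso-slice-pres} and, being slice preserving, it turns the $*$-product into an ordinary product. It remains to compute $\exp_*(f_v)$. The key input is the identity $f_v*f_v=-f_v^s$ from Proposition~\ref{product-properties}: since $f_v^s\in\So_\R(\Omega)$ commutes with everything, induction gives $f_v^{*(2m)}=(-1)^m(f_v^s)^m$ and $f_v^{*(2m+1)}=(-1)^m(f_v^s)^m f_v$. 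Splitting the exponential series into even and odd indices and recognizing the defining series of $\mu$ and $\nu$ evaluated at the slice preserving function $f_v^s$, I obtain $\exp_*(f_v)=\mu(f_v^s)+\nu(f_v^s)f_v$, whence \eqref{munu}.

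For the symmetrization identity $(\exp_*(f))^s=\exp(2f_0)$ I would use that slice conjugation is a $*$-anti-involution, so $(f^{*n})^c=(f^c)^{*n}$ and hence $(\exp_*(f))^c=\exp_*(f^c)$. Since $f*f^c=f^c*f=f^s$ the functions $f,f^c$ $*$-commute, and the addition formula gives $(\exp_*(f))^s=\exp_*(f)*\exp_*(f^c)=\exp_*(f+f^c)=\exp_*(2f_0)=\exp(2f_0)$, the last step again by Remark~\ref{caso-slice-pres}. (Alternatively, one feeds \eqref{munu} directly into $E^s=E*E^c$: writing $E=a+b f_v$ with $a=\exp(f_0)\mu(f_v^s)$ and $b=\exp(f_0)\nu(f_v^s)$ slice preserving, a short computation using $f_v*f_v=-f_v^s$ gives $E*E^c=a^2+b^2 f_v^s=\exp(2f_0)\big(\mu(f_v^s)^2+\nu(f_v^s)^2 f_v^s\big)$, which equals $\exp(2f_0)$ by \eqref{sommaquadrati}.) Finally, for \eqref{inverse}, the functions $f$ and $-f$ $*$-commute, so the addition formula yields $\exp_*(f)*\exp_*(-f)=\exp_*(0)=1$. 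Since $(\exp_*(f))^s=\exp(2f_0)\not\equiv0$, the zero-product property of Proposition~\ref{propGPS} shows the right $*$-inverse of $\exp_*(f)$ is unique; as $(\exp_*(f))^{-*}=((\exp_*(f))^s)^{-1}(\exp_*(f))^c$ is by definition such a right inverse, it must coincide with $\exp_*(-f)$.

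I expect the only genuine obstacle to be the careful justification of the infinite-series manipulations — normal convergence of $\exp_*$, the legitimacy of splitting the series into even and odd parts, and the Cauchy product underlying the addition formula — together with the bookkeeping in the $*$-binomial theorem; once these analytic points are granted, the remaining content is the purely algebraic reduction to slice preserving functions, where the $*$-product collapses to the ordinary product and the classical identity \eqref{sommaquadrati} closes the argument.
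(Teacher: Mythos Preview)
Your argument is correct. Note, however, that the paper does not actually supply a proof of this proposition: it is stated as a summary of results from~\cite{A-dF} (``Next proposition collects several features of the $*$-exponential (see~\cite{A-dF}, where a slightly different notation is used)''), so there is no in-paper proof to compare against. Your approach---first establishing the addition formula via the $*$-binomial theorem under commutativity, then deducing~\eqref{munu} from the splitting $f=f_0+f_v$ together with $f_v*f_v=-f_v^s$, and finally obtaining the symmetrization and inverse identities as formal consequences---is exactly the natural route and matches how these results are derived in the cited reference.
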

In particular Equality~\eqref{inverse} shows that $\exp_{*}(f)$ is never-vanishing on $\Omega$.

The following examples elucidate the behavior of the $*$-exponential in some
notable cases.

\begin{example}\label{exppiJ=-1}
As $\mathcal{J}$ defined in~\ref{mathcalJ} is slice preserving, then a straightforward computation gives $\mathcal{J}^{s}=\mathcal{J}^{2}\equiv -1$.
Therefore, we have
\begin{align*}
\exp_{*}(\pi\mathcal{J})&=\exp(\pi\mathcal{J})=\sum_{n\in\mathbb{N}}\frac{(\pi\mathcal{J})^{n}}{n!}=\sum_{m\in\mathbb{N}}\frac{(-1)^{m}\pi^{2m}}{(2m)!}+\sum_{m\in\mathbb{N}}\frac{(-1)^{m}\pi^{2m+1}}{(2m+1)!}\mathcal{J}\\
&=\cos(\pi)+\sin(\pi)\mathcal{J}=-1.
\end{align*}
\end{example}

\begin{example}
If $g_{v}$ a zero divisor, then $g_{v}^{s}\equiv 0$ and thus,
$$
\exp_{*}(g_{v})=\mu(g_{v}^{s})+\nu(g_{v}^{s})g_{v}=\mu(0)+\nu(0)g_{v}=1+g_{v}.
$$
\end{example}

\begin{example}\label{esempi-exp-composizione}
Let $f(q)=i+qj$. As $f_{0}\equiv 0$ and $f_v^s=1+q^2$ we have
$$\exp_*(f)=\mu(q^2+1)+\nu(q^2+1)(i+qj).$$
In particular for any $q\in\SF$ we have $q^2+1=0$, so $(\exp_*(f))(q)=1+i+qj$,
giving in particular $(\exp_*(f))(j)=i$. 
Nonetheless,  $(\exp\circ f)(j)=e^{f(j)}=e^{i-1}$.
\end{example}
\begin{example}
If $f(q)=\pi\cos(q)i+\pi\sin(q)j$, again $f_{0}\equiv 0$ and
$f_{v}^{s}\equiv \pi^{2}$, so 
$$\exp_*(f)=\mu(\pi^2)+\nu(\pi^2)\pi(cos (q) i+ \sin (q) j)\equiv-1.$$
\end{example}
Lastly, we compute $\exp_{*}$ on one-slice preserving functions.
\begin{remark}
Notice that if $f$ is $\mathbb C_I$-preserving for some $I\in\SF$, then we have $f_v=f_1I$ with  $f_1$  slice preserving. This entails
$$\exp_*(f)=\exp(f_0)(\cos(f_1)+\sin(f_1)I).$$  
Indeed,  $\mu(f_v^s)=\mu(f_1^2)=\cos (f_1)$, 
$\nu(f_v^s)f_v=\nu(f_1^2)f_1I=\sin(f_1)I$.
\end{remark}

We now introduce the notion of $*$-logarithm.

\begin{definition} 
We set
\begin{align*}
\mathcal{R}^{*}(\Omega)&:=\{g\in\So(\Omega)\,|\, g\mbox{ is never vanishing}\},\\
\mathcal{R}^{1}(\Omega)&:=\{g\in\So(\Omega)\,|\, g^{s}\equiv 1\}.
\end{align*}
Given $g\in\So^{*}(\Omega)$, a function $f\in\So(\Omega)$ such that $\exp_{*}(f)=g$ is said to be a $*$-\textit{logarithm} of $g$.
\end{definition}

Notice that the elements in $\mathcal{R}^{*}(\Omega)$ act by conjugation both on $\So^{1}(\Omega)$ and on 
the set of functions having a $*$-logarithm.

\begin{remark}\label{conjugate}
If $h\in\So^{1}(\Omega)$ and $\chi\in \mathcal{R}^{*}(\Omega)$, then
$(\chi^{-*}*h*\chi)^{s}=\chi^{-s}*h^{s}*\chi^{s}=\chi^{-s}*\chi^{s}=1$.
Moreover, if $f\in\mathcal{R}(\Omega)$ is a $*$-logarithm of $h$, then
a trivial computation shows that $\chi^{-*}*f*\chi$ is a $*$-logarithm of 
$\chi^{-*}*h*\chi$ as $\exp_{*}(\chi^{-*}*f*\chi)=\chi^{-*}*\exp_{*}(f)*\chi$.
%
%
\end{remark}

We conclude this section by showing that a slice regular function without non-real isolated zeroes can be factorized as the product of a slice preserving function and
a slice regular function in $\So^{1}(\Omega)$.

\begin{proposition}\label{quotient} Let $g\in\So(\Omega)$ be such that $g$ has no non-real isolated zeroes and
$g^s$ has a square root $\tau\in\So_\R(\Omega)\setminus\{0\}$. Then $\tau^{-*}*g=g/\tau$ is a well-defined slice regular function on $\Omega$ which belongs to $\So^{1}(\Omega)$. 
\end{proposition}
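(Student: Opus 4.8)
The plan is to reduce the statement to a removable‑singularity problem across the zero set of $\tau$. Since $\tau$ is slice preserving, $\tau^{c}=\tau$ and $\tau^{s}=\tau*\tau^{c}=\tau^{2}$, so $\tau^{-*}=(\tau^{s})^{-1}*\tau^{c}=\tau^{-2}*\tau=\tau^{-1}$ and $\tau^{-*}*g=\tau^{-1}g$ (a genuine pointwise quotient, since $\tau(q)\in\C_{q}$) on $\Omega\setminus\{\tau=0\}$, where $\{\tau=0\}=\{\tau^{2}=0\}=\{g^{s}=0\}$. As $\tau\not\equiv0$ and no $\C_{q}$ has nilpotents, $g^{s}=\tau^{2}\not\equiv0$; so $g$ is not a zero divisor and, $g^{s}$ being a non‑zero slice preserving function, $\{g^{s}=0\}$ is a circular set meeting each slice in a discrete set, i.e.\ a disjoint union of isolated real points and isolated $2$‑spheres. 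It thus suffices to exhibit, on a neighbourhood of each of these points and each of these spheres (chosen to meet $\{g^{s}=0\}$ only in that point, resp.\ that sphere), a slice regular function coinciding with $\tau^{-1}g$ off $\{g^{s}=0\}$; such local functions then agree as functions on their overlaps (which lie in $\Omega\setminus\{g^{s}=0\}$) and so glue to a single $h\in\So(\Omega)$ with $h=\tau^{-1}g$ off $\{g^{s}=0\}$. Finally $h^{s}=\tau^{-2}g^{s}=\tau^{-2}\tau^{2}\equiv1$ there, hence everywhere by the Identity Principle, so $h\in\So^{1}(\Omega)$, and $h=\tau^{-*}*g=g/\tau$ is the asserted function.

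Near an isolated real zero $\alpha_{0}$ of $g^{s}$ I would use the power‑series expansion $g=(q-\alpha_{0})^{m}g_{1}$ with $g_{1}(\alpha_{0})\ne0$; since $\alpha_{0}$ is real, $g^{s}=(q-\alpha_{0})^{2m}g_{1}^{s}$ with $g_{1}^{s}(\alpha_{0})=g_{1}(\alpha_{0})(g_{1}(\alpha_{0}))^{c}\ne0$, so $\tau$ has order exactly $m$ at $\alpha_{0}$, i.e.\ $\tau=(q-\alpha_{0})^{m}u$ with $u\in\So_{\R}(\Omega)$, $u(\alpha_{0})\ne0$. Cancelling the (central) factor $(q-\alpha_{0})^{m}$ gives $\tau^{-1}g=u^{-1}g_{1}=u^{-*}*g_{1}$, slice regular near $\alpha_{0}$.

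The core of the argument is at an isolated sphere $\SF_{q_{0}}\subseteq\{g^{s}=0\}$ with $q_{0}$ non‑real. Here $g^{s}(q_{0})=0$ forces $g$ to vanish somewhere on $\SF_{q_{0}}$; since $g$ has no non‑real isolated zero, $g$ must then vanish on the whole of $\SF_{q_{0}}$, so dividing out the maximal power of the minimal polynomial $\Delta_{q_{0}}:=(q-q_{0})*(q-q_{0}^{c})$ of $\SF_{q_{0}}$ yields $g=\Delta_{q_{0}}^{m}*g_{1}$ with $g_{1}\in\So(\Omega)$; and — again by the hypothesis on the zeros of $g$ — $g_{1}$ vanishes at \emph{no} point of $\SF_{q_{0}}$, not merely not identically on it. Consequently $g^{s}=\Delta_{q_{0}}^{2m}g_{1}^{s}$ with $g_{1}^{s}$ non‑vanishing on $\SF_{q_{0}}$, so $\tau$ has spherical multiplicity exactly $m$ at $\SF_{q_{0}}$, say $\tau=\Delta_{q_{0}}^{m}v$ with $v\in\So_{\R}(\Omega)$ non‑vanishing on $\SF_{q_{0}}$; on a tubular neighbourhood $U$ of $\SF_{q_{0}}$ on which $g_{1}$ and $v$ are both non‑vanishing, cancelling the central factor $\Delta_{q_{0}}^{m}$ gives $\tau^{-1}g=v^{-1}g_{1}=v^{-*}*g_{1}\in\So(U)$. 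Gluing the pieces as in the first paragraph then finishes the proof. (Alternatively one can run everything on the stem functions $G,T$ of $g,\tau$: $T$ is $\C$‑valued with $T^{2}=G^{s}=GG^{c}$, and one shows that $T^{-1}G$, holomorphic off $\{T=0\}$, has at each zero $z_{0}$ of $T$ order of numerator at least order of denominator, hence extends holomorphically.)

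The only real obstacle is the step ``$g$ has no non‑real isolated zero $\Rightarrow g_{1}$ is non‑vanishing on all of $\SF_{q_{0}}$''; this is precisely where the hypothesis is used, and it is sharp. For instance $g=(q^{2}+1)(q-j)^{*2}$ has $g^{s}=(q^{2}+1)^{4}$, which still admits the slice preserving square root $\tau=(q^{2}+1)^{2}$, yet $\tau^{-*}*g=(q^{2}+1)^{-1}*(q-j)^{*2}$ is not slice regular on $\SF$ — here $g$ has the non‑real isolated zero $j$ ``hidden'' inside the spherical zero $\SF$, which the hypothesis forbids. In the stem‑function picture the same fact is the requirement $AA^{c}\ne0$ in $\HH\otimes_{\R}\C$ on the leading coefficient $A$ of $G$ at $q_{0}$, the vanishing $AA^{c}=0$ being exactly the algebraic signature of $g$ vanishing to strictly larger order at a single point of $\SF_{q_{0}}$.
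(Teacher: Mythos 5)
Your proof is correct and follows essentially the same route as the paper's: reduce to a removable-singularity problem on the zero set of $\tau$, which consists only of real isolated points and isolated spheres, then factor out $(q-x_0)^m$ (resp.\ $\Delta_{q_0}^m$) from both $g$ and $\tau$ and cancel the central factor; your explicit check that $h^s\equiv 1$ and the sharpness example are welcome additions but not a different method.
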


\begin{proof} Since $\tau$ is a slice preserving function, outside the zero set of $\tau$ the function $\tau^{-*}*g$ is a well defined slice-regular function which coincides with the pointwise product $\tau^{-1}g=g/\tau$.

Then we are left to define the function $\tau^{-1}g$ at the zeroes of $\tau$. Since $g$ is not a zero divisor, then it only has real isolated zeroes and spherical isolated zeroes (non-real isolated zeroes are ruled out by the assumption), so the zero set of $\tau$ coincides with the zero set of $g$ (and of $g^s$).

If $x_0\in\Omega\cap \R$ is a zero of $g$, we choose a ball $U_{x_0}\subset\Omega$ centered at $x_0$ on which  $g$  vanishes at $x_0$ only. By Theorem 3.36 in~\cite{G-S-St} we can write $g(q)=(q-x_0)^m \tilde g(q)$ for a suitable $\tilde g$ slice-regular and never-vanishing on $U_{x_0}$. 
Then we have $g^s(q)=(q-x_0)^{2m}\tilde g^s(q)$. As $\tilde g^s$ is never-vanishing on $U_{x_0}$ and it is strictly positive on $U_{x_0}\cap \R$, then there exists $\alpha\in\So_\R(U_{x_0})$ such that $\alpha^2=\tilde g^s$.
A trivial computation shows that the slice-regular function $\beta(q)=(q-x_0)^m\alpha(q)$ is a square root of $g^s$ on $U_{x_0}$. By the identity principle, there are only two slice-preserving square roots of $g^s$ on $U_{x_0}$, so either 
$\beta=\tau$ or $\beta=-\tau$; up to a change of sign of $\alpha$ (and thus of $\beta$) we can suppose $\beta=\tau$.

On $U_{x_0}$ we consider the function $\tilde g(q)/\alpha(q)$ (which is well defined because $\alpha$ is never-vanishing and slice preserving on $U_{x_0}$). Moreover on $U_{x_0}\setminus\{x_0\}$ we have $\tilde g(q)/\alpha(q)=((q-x_0)^m \tilde g(q))/((q-x_0)^m\alpha(q))=g(q)/\tau(q)$, so that the two functions truly coincide on $U_{x_0}\setminus\{x_0\}$ and define a slice regular function on $U_{x_0}$.

Analogously, if $g$ has a spherical zero at $\SF_{q_0}$, then in a ``toric'' neighbourhood $U_{q_0}$ of such a sphere we can write $g(q)=\Delta_{q_0}(q)^m \tilde g(q)$ with $\tilde g$ slice regular and never vanishing on $U_{q_0}$. Again we find that $g^s(q)=\Delta_{q_0}(q)^{2m}\tilde g^s(q)$; as $\tilde g^s$ is never vanishing on $U_{q_0}$, it has a   slice-preserving square root $\alpha$ on that neighbourhood. Thus on $U_{q_0}$ the function $\beta(q)=\Delta_{q_0}(q)^m \alpha(q)$ is a square root of $g^s$ which coincides with $\tau$, up to a change of sign of $\alpha$ (and thus of $\beta$). 
On $U_{q_0}$ we consider $\tilde g(q)/\alpha(q)$ (which as above is well defined) and on $U_{q_0}\setminus\{\SF_{q_0}\}$ coincides with $g/\tau$, and we are done.
\end{proof}

If the domain $\Omega$ is slice-contractible, $g$ is not a zero divisor and has no non-real isolated zeroes, then
the existence of a square root $\tau\not\equiv0$ of $g^s$  is a consequence of Corollary 3.2 in~\cite{A-dF}.

\begin{corollary}\label{post-quotient} Let $\Omega$ be a slice-contractible domain. For any
$g\in\So(\Omega)$ which is not a zero divisor and has no non-real isolated zeroes, let us denote by $\tau\in\So_\R(\Omega)$ a square root of $g^s$. Then $\tau^{-*}*g=g/\tau$ is a well-defined slice regular-function on $\Omega$ which belongs to $\So^{1}(\Omega)$. 
\end{corollary}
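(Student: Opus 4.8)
The plan is to deduce the statement from Proposition~\ref{quotient}, whose hypotheses are exactly that $g$ has no non-real isolated zeroes (assumed here) and that $g^{s}$ admits a square root $\tau\in\So_{\R}(\Omega)\setminus\{0\}$, and whose conclusion is precisely what we want. Since $g$ is not a zero divisor, $g^{s}\in\So_{\R}(\Omega)$ is not identically zero (Proposition~\ref{product-properties}, recalling $g^{s}=g_{0}^{2}+g_{1}^{2}+g_{2}^{2}+g_{3}^{2}\geq 0$ on $\Omega\cap\R$), so the whole matter reduces to producing such a $\tau$; once it exists, Proposition~\ref{quotient} closes the argument verbatim.

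To build $\tau$ I would work one slice at a time. Fix $I\in\SF$ and consider $(g^{s})_{I}=(g^{s})|_{\Omega_{I}}$, a $\C_{I}$-valued holomorphic function, not identically zero, and (in the slice case) nonnegative on $\Omega_{I}\cap\R$. The structural point is that every zero of $g^{s}$ in $\Omega_{I}$ has \emph{even} order: because $g$ has no non-real isolated zeroes and is not a zero divisor, its zeroes are real isolated points or spheres, and near such a zero $g$ factors as $(q-x_{0})^{m}\tilde g$ or $\Delta_{q_{0}}^{m}\tilde g$ with $\tilde g$ never vanishing, so that $g^{s}$ factors with the even exponent $2m$ (exactly as in the proof of Proposition~\ref{quotient}). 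Hence $(g^{s})_{I}$ is the product of a holomorphic square, coming from these even-order zeroes, and a nowhere-vanishing holomorphic function on $\Omega_{I}$. Since $\Omega$ is slice contractible, $\Omega_{I}$ (resp.\ $\Omega_{I}^{+}$, in the product case) is simply connected, so the nowhere-vanishing factor admits a holomorphic square root; this is precisely the content of Corollary~3.2 of~\cite{A-dF}. Choosing the branch so that the resulting $\tau_{I}$ is positive on $\Omega_{I}\cap\R$ in the slice case, and — in the product case — using simple connectedness of $\Omega_{I}^{+}$ together with the reflection $q\mapsto q^{c}$ to pin down a branch on all of $\Omega_{I}$ that is symmetric under conjugation, we obtain $\tau_{I}$ with $\tau_{I}(q^{c})=(\tau_{I}(q))^{c}$ and $\tau_{I}^{2}=(g^{s})_{I}$.

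Finally I would extend and conclude: $\tau:=\text{ext}(\tau_{I})\in\So(\Omega)$ is slice preserving because $\tau_{I}$ is conjugation-symmetric, and $\tau*\tau=\tau^{2}$ agrees with $g^{s}$ on $\Omega_{I}$, hence $\tau^{2}\equiv g^{s}$ on all of $\Omega$ by the Identity Principle. Thus $\tau\in\So_{\R}(\Omega)\setminus\{0\}$ is a square root of $g^{s}$, and Proposition~\ref{quotient} gives that $\tau^{-*}*g=g/\tau$ is a well-defined slice regular function in $\So^{1}(\Omega)$. The only genuinely delicate point is the global, single-valued existence of $\tau$ — i.e.\ the absence of a monodromy obstruction around the real or spherical zeroes of $g$ and around the holes of $\Omega_{I}$ — and this is exactly what slice contractibility buys, via Corollary~3.2 of~\cite{A-dF}; the remaining bookkeeping (orders of zeroes, conjugation symmetry of the chosen branch, the product-domain branch selection) is routine.
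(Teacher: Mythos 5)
Your proposal is correct and follows exactly the paper's route: the paper derives this corollary by observing that, for a non-zero-divisor $g$ without non-real isolated zeroes on a slice-contractible $\Omega$, the square root $\tau\not\equiv 0$ of $g^{s}$ exists by Corollary~3.2 of~\cite{A-dF}, and then invokes Proposition~\ref{quotient}. Your additional slice-by-slice construction of $\tau$ (even order of the zeroes of $g^{s}$, simple connectedness, conjugation-symmetric branch, extension and Identity Principle) is a sound unpacking of what that cited corollary provides, but is not needed beyond the citation.
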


%
%

\section{Behavior of the entire function $\mu$}\label{mu}

We now investigate the behavior of the slice preserving map $\mu$ on the quaternions. In particular, we will prove an invertibility result for the restriction of function $\mu$ to a family of subdomains of $\HH$ that will be used in Section~\ref{the-core}.

\begin{definition}
We define the following slice domains:
\begin{equation*}
\mathcal{D}_{0}:=\left\{x+yJ\,|\, x<\pi^{2}-\frac{y^{2}}{4\pi^{2}},\,J\in\SF\right\}\subset\HH,
\end{equation*}
and for any positive $n\in\N$,
\begin{equation*}
\mathcal{D}_{n}:=\left\{x+yJ\,|\, n^{2}\pi^{2}-\frac{y^{2}}{4n^{2}\pi^{2}}<x<(n+1)^{2}\pi^{2}-\frac{y^{2}}{4(n+1)^{2}\pi^{2}},\,J\in\SF\right\}\subset\HH.
\end{equation*}
For $n>0$ we also denote by 
$$
\Gamma_{n}:=\left\{x+yJ\,|\, x=n^{2}\pi^{2}-\frac{y^{2}}{4n^{2}\pi^{2}},\,J\in\SF\right\}\subset\HH
$$
the boundaries of the above domains. Indeed, we have $\partial\mathcal{D}_{0}=\Gamma_{1}$ and $\partial\mathcal{D}_{n}=\Gamma_{n}\cup\Gamma_{n+1}$, for any positive $n\in\N$.
\end{definition}
\begin{figure}[ht]
\includegraphics[width=10cm]{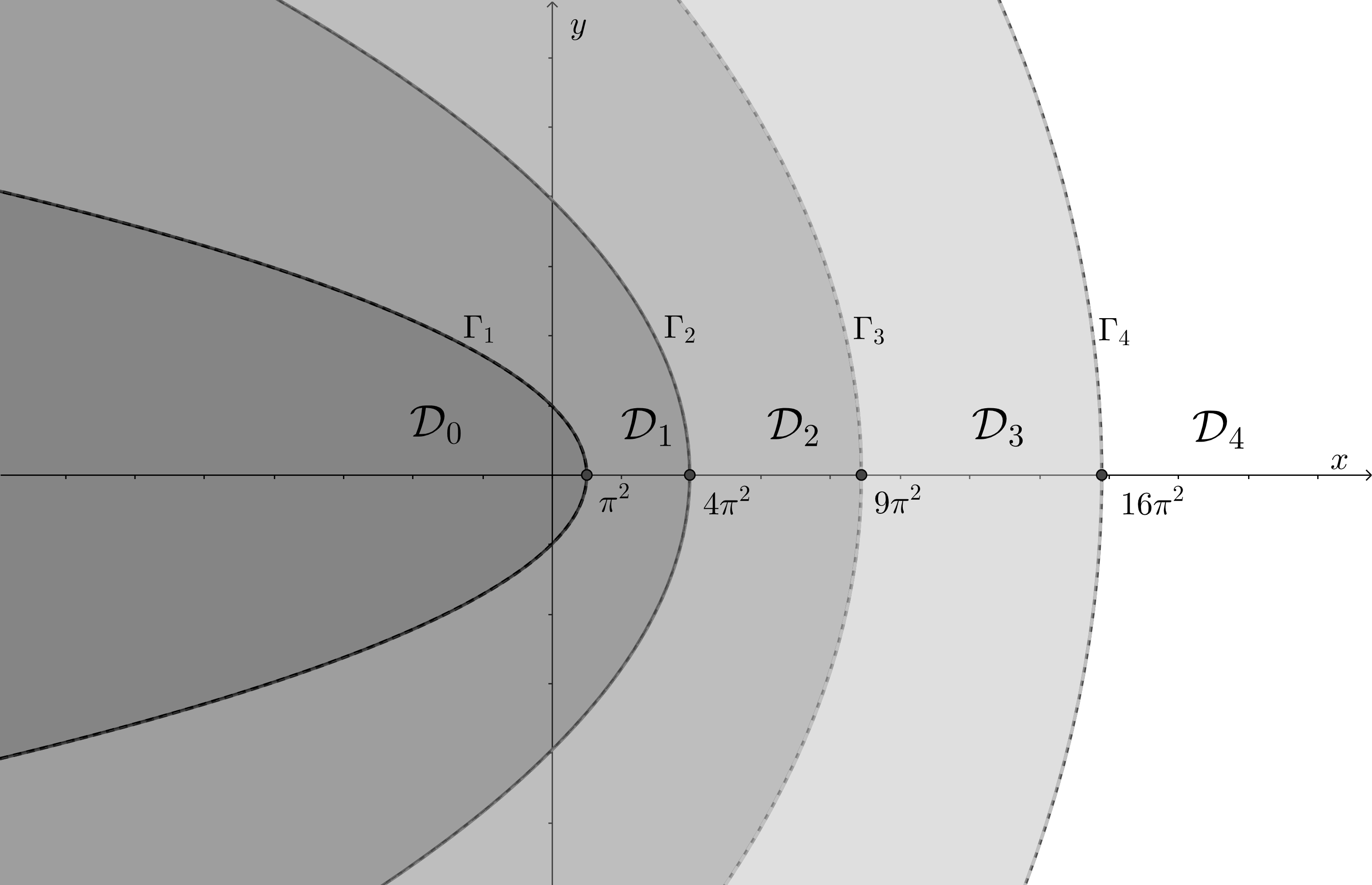}
\caption{The section of the sets $\mathcal{D}_{n}$ and $\Gamma_{n}$ on a fixed slice.}
\end{figure}

\begin{definition}
Let $\Omega$ be a slice domain and $f$ a non-constant slice preserving function on $\Omega$. We say that $f$ is \textit{biregular} on $\Omega$ if there exists
$g\in\So_{\R}(f(\Omega))$, such that $g\circ f=id_{\Omega}$ and
$f\circ g=id_{f(\Omega)}$. In such a case we call $g$ the \textit{biregular inverse} of $f$.
\end{definition}

\begin{theorem}\label{fundamentaldomain}
The restriction of the map $\mu$ to $\mathcal{D}_{0}$ is biregular onto $\HH\setminus(-\infty,-1]$. 
For any positive $n\in\N$, the restriction of the map $\mu$ to $\mathcal{D}_{n}$ is biregular onto $\HH\setminus((-\infty,-1]\cup[1,+\infty))$.
\end{theorem}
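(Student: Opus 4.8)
The plan is to reduce everything to the scalar (complex) situation one slice at a time, using the fact that $\mu$ is slice preserving and entire. First I would recall the key identity $\mu(q^2)=\cos(q)$ for all $q\in\HH$ (from the Remark following Definition~\ref{definizione-mu-nu}), so that on each slice $\C_I$ the map $\mu_I:\C_I\to\C_I$ is, after the change of variables $q=w^2$, essentially the cosine. Since the problem is invariant under the action of $\SF$ (everything is circular and $\mu$ is slice preserving), it suffices to prove the statement on a single slice $\C_I\cong\C$ and then extend by the Representation Formula; biregularity of the slice restriction promotes to biregularity of the extension because the biregular inverse, constructed on a slice, extends as well and the extensions of $g\circ f$ and $f\circ g$ agree with the identity on that slice, hence everywhere by the Identity Principle.

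On the slice, write $z=x+Iy$. The domain $\mathcal D_0$ intersected with $\C_I$ is the region $\{x+Iy : x<\pi^2-y^2/(4\pi^2)\}$, i.e. the inside of the parabola $\Gamma_1$. The plan is to show that the map $z\mapsto\sqrt z$ (a suitable branch) carries this parabolic region biholomorphically onto the vertical strip $\{|\Re w|<\pi\}$, and that $\cos$ maps this strip biholomorphically onto $\C\setminus(-\infty,-1]$. Concretely: a branch of $w=\sqrt z$ is a conformal bijection from the complement of a ray onto a half-plane or strip; the parabola $x=\pi^2-y^2/(4\pi^2)$ is exactly the image under $z=w^2$ of the line $\Re w=\pm\pi$ (since $w=\pi+it$ gives $z=\pi^2-t^2+2\pi i t$, whose real part is $\pi^2-y^2/(4\pi^2)$ with $y=2\pi t$), so the inside of the parabola is the image of the strip $|\Re w|<\pi$ under squaring. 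Then $\cos$ is well known to be a conformal bijection from $\{|\Re w|<\pi\}$ onto $\C\setminus((-\infty,-1]\cup[1,\infty))$... but we want $\C\setminus(-\infty,-1]$ only; here I would be careful: the correct strip for $\mathcal D_0$ is actually $\{0<\Re w<\pi\}$ together with its reflection, i.e. we must track which branch of the square root makes the composition single-valued, and note that $\mu$ sees only $z=w^2$ so the ambiguity $w\mapsto -w$ is harmless and the image of the full parabolic interior under $\mu$ is $\cos(\{|\Re w|<\pi\})$ — one then checks this equals $\C\setminus(-\infty,-1]$ because the two slits $[1,\infty)$ and $(-\infty,-1]$ of $\cos$ on the strip are actually traversed/identified once the $\pm w$ identification is imposed. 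For $\mathcal D_n$ with $n>0$, the region between $\Gamma_n$ and $\Gamma_{n+1}$ is, in the $w$-variable, the strip $\{n\pi<|\Re w|<(n+1)\pi\}$, and $\cos$ restricted to the half-strip $\{n\pi<\Re w<(n+1)\pi\}$ is a conformal bijection onto $\C\setminus((-\infty,-1]\cup[1,\infty))$; since both half-strips $n\pi<\Re w<(n+1)\pi$ and $-(n+1)\pi<\Re w<-n\pi$ are interchanged by $w\mapsto -w$ and give the same $z=w^2$, the map $\mu$ on $\mathcal D_n\cap\C_I$ is a bijection onto that doubly slit plane.

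The concrete steps, then: (1) state the reduction to one slice via slice preservation and the Representation/Identity Principles, including the observation that the biregular inverse built on a slice extends regularly and the defining relations extend; (2) on the slice, introduce $w$ with $z=w^2$ and identify $\mathcal D_0\cap\C_I$ (resp. $\mathcal D_n\cap\C_I$) with the image under squaring of the strip $|\Re w|<\pi$ (resp. $n\pi<|\Re w|<(n+1)\pi$), checking the parabola equations by the direct computation above; (3) invoke the classical conformal mapping properties of $\cos$ on vertical strips to get the image; (4) verify injectivity of $\mu$ on the slice domain — this is where the $w\mapsto-w$ symmetry must be used: $\mu(z_1)=\mu(z_2)$ means $\cos(w_1)=\cos(w_2)$ with $w_i$ in the relevant strip, forcing $w_2=\pm w_1+2\pi k$, and the strip width plus the parity constraint forces $w_2=\pm w_1$, hence $z_1=z_2$; (5) define $\varphi=(\mu|_{\mathcal D_0})^{-1}$ as the regular extension of the slice inverse and note it is never-vanishing (since $\mu(0)=1\ne 0$ is attained, $0\notin\mathrm{range}$ issue is automatic because $\mathcal D_0$ does not meet $\mu^{-1}(0)$... actually $\varphi$ never vanishing means $0\notin\varphi(\HH\setminus(-\infty,-1])$, i.e. $0\notin\mathcal D_0$, which is clear since $0$ satisfies $x=0<\pi^2$ — so one must instead argue $\varphi$ is never $0$ meaning its values avoid $0$; since $0\in\mathcal D_0$ one should rather observe this differently, e.g. $\varphi$ never vanishing is not claimed here but in the introduction — I would simply omit this and cite it where used). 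The main obstacle I anticipate is step (4), the careful bookkeeping of the two-to-one square root against the periodicity and evenness of cosine, so that the slitting of the target is correctly $(-\infty,-1]$ for $n=0$ versus $(-\infty,-1]\cup[1,\infty)$ for $n\ge 1$; getting the branch cuts to land exactly on the stated rays is the delicate point, everything else being a routine transcription of classical facts about $\cos$ on strips.
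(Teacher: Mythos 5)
Your proposal is correct and follows essentially the same route as the paper: reduce to a single slice via slice preservation, use $\mu(w^2)=\cos(w)$ to identify $\mathcal D_n\cap\C_I$ with the image under squaring of a vertical strip of width $\pi$, and invoke the classical conformal behaviour of $\cos$ on such strips, with the $w\mapsto -w$ ambiguity of the square root matching the evenness of cosine. The only (harmless) difference is bookkeeping: the paper works with the half-strip $\{0<\Re w<\pi\}$ and treats the ray $[1,+\infty)$ (whose $\mu$-preimage is $(-\infty,0]\subset\mathcal D_0$) as a separate case, whereas you use the full symmetric strip $\{|\Re w|<\pi\}$ and quotient by $w\sim -w$ throughout.
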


\begin{proof}
We first carry out the proof in the case of $\mathcal{D}_{0}$. 
Consider the set 
$$
\widetilde{\mathcal{D}_{0}}:=\{x+Jy\in\HH\,\,|\,\,0<x<\pi,\,\,J\in\SF\}.
$$
It is easily seen (working slice by slice), that the restriction  to $\widetilde{\mathcal{D}_{0}}$ of the function $s:\HH\to\HH$, defined as $s(q)=q^{2}$,
gives a bijection onto $\mathcal{D}_{0}\setminus(-\infty,0]$. 
Furthermore, working again slice by slice, the restriction to $\widetilde{\mathcal{D}_{0}}$ of the cosine function gives a bijection onto $\HH\setminus((-\infty,-1]\cup[1,+\infty))$.

For any $\xi\in\HH\setminus((-\infty,-1]\cup[1,+\infty))$ there exists a unique $t\in\widetilde{\mathcal{D}_{0}}$ such that
$\xi=\cos(t)$. Now $q=s(t)=t^{2}\in\mathcal{D}_{0}\setminus(-\infty,0]$ is the unique element of $\mathcal{D}_{0}\setminus(-\infty,0]$
which is mapped in $\xi$ by $\mu$; indeed we have $\mu(q)=\mu(t^{2})=\cos(t)=\xi$.

Now consider $\xi\in[1,+\infty)$. If $t\in\HH$ is such that $\cos(t)=\xi$, then there exist $k\in\N$ and $I\in\SF$ such that $a=2k\pi$, $b=\mbox{arccosh}(\xi)$ and $t=a+Ib$. Thus, $t^{2}=4k^{2}\pi^{2}-(\mbox{arccosh}(\xi))^{2}+4k\pi\mbox{arccosh}(\xi) I$ belongs to $\mathcal{D}_{0}$ if and only if $k=0$. In this particular case then $q=t^{2}$ is equal to $-(\mbox{arccosh}(\xi))^{2}\in(-\infty,0]$ which is the unique element in $q\in\mathcal{D}_{0}$ such that $\mu(q)=\xi$.

Then the function $\mu$ is a slice preserving bijection from $\mathcal{D}_{0}$ onto $\HH\setminus(-\infty,-1]$.
Let now $I\in\SF$. Since $\mu$ is slice preserving, then its restriction $\mu_{I}:\mathcal{D}_{0}\cap\C_{I}\to \C_{I}\setminus(-\infty,-1]$ is a holomorphic bijection and thus a biholomorphism. Then the inverse of  $\mu_{I}$ is a holomorphic function from $\C_{I}\setminus(-\infty,-1]$ to $\mathcal{D}_{0}\cap\C_{I}$, thus showing that the inverse of $\mu$ is slice regular.%

By using 
$$
\widetilde{\mathcal{D}_{n}}:=\{x+Jy\in\C\,\,|\,\,n\pi<x<(n+1)\pi,\,\,J\in\SF\},
$$
with $n>0$
in place of $\widetilde{\mathcal{D}_{0}}$, the above argument is easily adapted to prove the last
part of the assertion.

\end{proof}

\begin{figure}[ht]
\includegraphics[width=10cm]{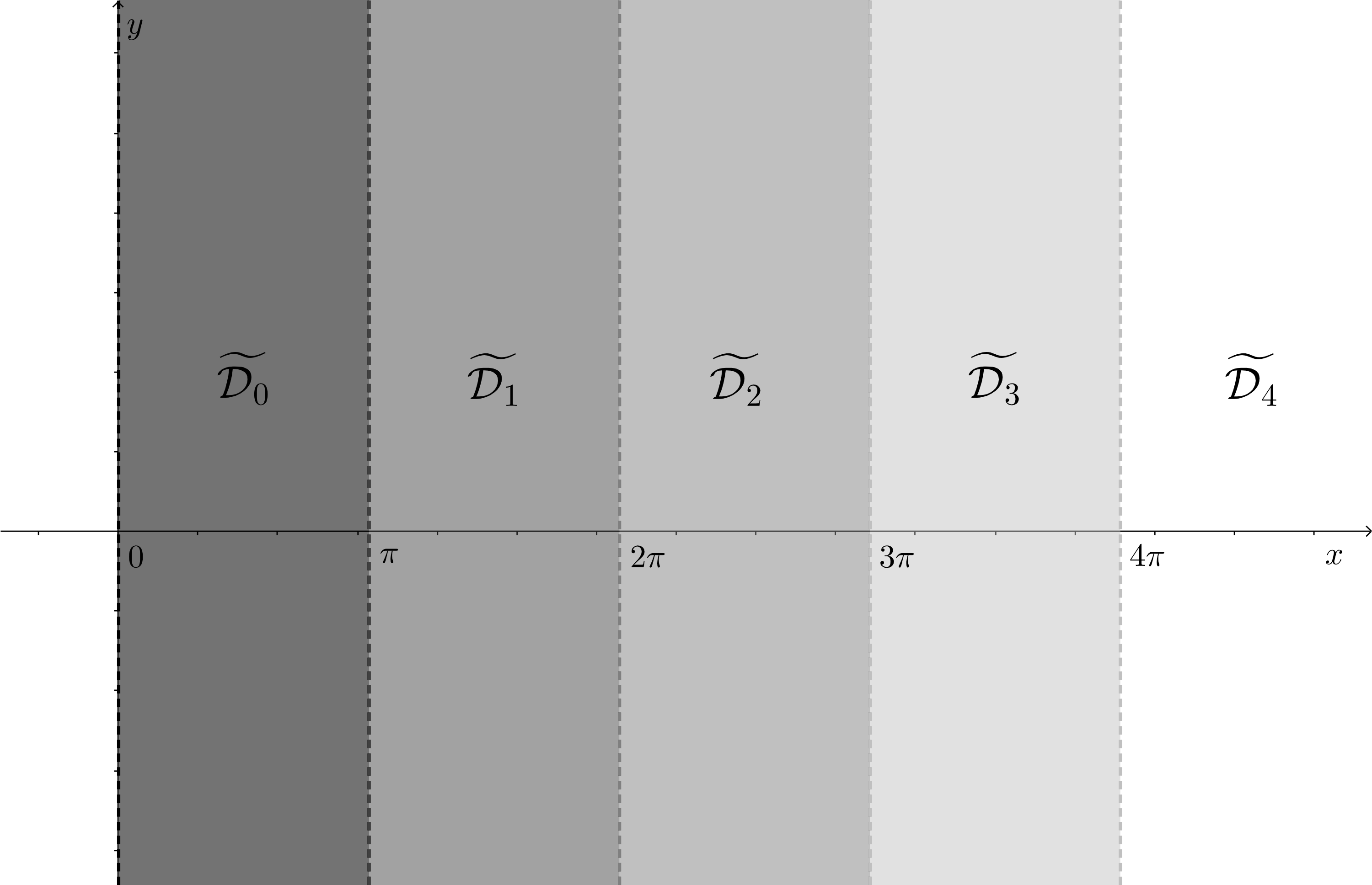}
\caption{The section of the sets $\widetilde{\mathcal{D}_{n}}$ on a fixed slice.}
\end{figure}

\begin{remark}
Notice that the boundaries of $\mathcal{D}_{0}$ and $\mathcal{D}_{n}$ are mapped by $\mu$ in either the left half line
$(-\infty,-1]$ or the right half line $[1,+\infty)$. Indeed we have
$$
\mu(\Gamma_{n})=\begin{cases}
(-\infty,-1],\quad\mbox{for}\,\,n\mbox{ odd}\\
[1,+\infty),\quad\mbox{for}\,\,n\mbox{ even}.
\end{cases}
$$
\end{remark}

Theorem~\ref{fundamentaldomain} allows us to give the following definition.
\begin{definition}\label{varphi}
We denote by $\varphi:\HH\setminus(-\infty,-1]\to\mathcal{D}_{0}$ the biregular  inverse of $\mu|_{\mathcal{D}_{0}}$.
\end{definition}

\begin{remark}
As an immediate consequence of the definition of $\varphi$ we have
$$\varphi\circ (\mu|_{\mathcal{D}_{0}})=id_{\mathcal{D}_{0}}\quad\mbox{and}\quad
\mu\circ\varphi=id_{\HH\setminus (-\infty,-1]}
$$
\end{remark}

\begin{remark}\label{zero}
In particular we observe that $\varphi (1)=0$ and that
the function $\nu$ is never vanishing on $\mathcal{D}_{0}$. The first assertion is trivial 
because $0$ is the unique point in $\mathcal{D}_{0}$ whose image via $\mu$ is $1$.
Moreover $q\in\HH$ is a zero of $\nu$ if and only if
$q\in\{n^{2}\pi^{2}\,|\,n\in\Z\setminus\{0\}\}$ and none of these points belong to $\mathcal{D}_{0}$.
\end{remark}

\section{Initial existence results}\label{first_existence_results}

We start now our discussion on the solvability of equation~\eqref{exp}. The first case we consider is when  the function $g$ preserves one slice; in this case, under suitable topological hypothesis on $\Omega$, namely the fact that t is a slice-contractible domain, the solvability of equation~\eqref{exp} follows almost immediately from
the complex holomorphic case. Nonetheless, differences with the complex case arise when looking to the case of a product domain and of a slice preserving function.


\begin{proposition}\label{one-slice-case} Let $\Omega$ be a slice-contractible domain and 
$g\in\So^{*}(\Omega)$. 
If $g$ is one-slice preserving, then there exists $f\in\So(\Omega)$ which preserves the same slice as $g$ and such that $\exp_{*}(f)=g$. Moreover, 

\noindent $\bullet$ if $\Omega$ is a slice domain, $g$ is slice preserving and
positive on $\Omega\cap\R$, then there exists a unique slice preserving $*$-logarithm of $g$;

\noindent $\bullet$  if $\Omega$ is a product domain and $g$ is slice preserving, 
then there exists a slice preserving $*$-logarithm of $g$.
\end{proposition}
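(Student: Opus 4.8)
The plan is to reduce every assertion to the classical existence of a holomorphic logarithm on a simply connected planar domain, passing through the restriction-to-a-slice dictionary and Remark~\ref{caso-slice-pres}. Suppose first $g(\Omega_{I})\subset\C_{I}$ for some $I\in\SF$. Since $g$ never vanishes, $g_{I}:=g_{|\Omega_{I}}$ is a holomorphic map (with respect to left multiplication by $I$) from $\Omega_{I}$ into $\C_{I}\setminus\{0\}$. If $\Omega$ is slice, $\Omega_{I}$ is simply connected by slice-contractibility, so a standard monodromy argument yields a holomorphic $f_{I}:\Omega_{I}\to\C_{I}$ with $\exp(f_{I})=g_{I}$; if $\Omega$ is product, $\Omega_{I}$ is the disjoint union of $\Omega_{I}^{+}$ and its conjugate $\{\bar q:q\in\Omega_{I}^{+}\}$, both simply connected, and we choose a holomorphic logarithm on each component separately. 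In either case $f_{I}$ is holomorphic with respect to $I$ and $\C_{I}$-valued, hence by the regular-extension property recalled after the Representation Formula it extends to $f:=\text{ext}(f_{I})\in\So(\Omega)$, which preserves $\C_{I}$, i.e.\ $f\in\So_{I}(\Omega)$. By Remark~\ref{caso-slice-pres} we get $\exp_{*}(f)=\text{ext}(\exp(f_{I}))=\text{ext}(g_{I})=g$, proving the first assertion.

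For the second assertion let $g\in\So_{\R}(\Omega)$ with $\Omega$ slice, so $g$ is intrinsic and, by hypothesis, real and positive on $\Omega\cap\R\neq\emptyset$. On the simply connected set $\Omega_{I}$ the holomorphic logarithms of $g_{I}$ form a $2\pi I\Z$-family, and exactly one of them, $f_{I}$, is real on $\Omega\cap\R$; this is where $g_{I}>0$ there is used. To see that $f:=\text{ext}(f_{I})$ is slice preserving I will check that $f_{I}$ is intrinsic: the map $q\mapsto\overline{f_{I}(\bar q)}$ is again holomorphic with respect to $I$, its exponential equals $\overline{g_{I}(\bar q)}=g_{I}(q)$ because $g$ is intrinsic, and it agrees with $f_{I}$ on $\Omega\cap\R$, hence everywhere by the Identity Principle. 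Thus $f\in\So_{\R}(\Omega)$ and $\exp_{*}(f)=\exp\circ f=g$. Uniqueness: if $\tilde f\in\So_{\R}(\Omega)$ is another slice preserving $*$-logarithm, then $f_{I}-\tilde f_{I}$ is a continuous $2\pi I\Z$-valued function on the connected set $\Omega_{I}$, hence a constant $2\pi I n$; but $f$ and $\tilde f$ being slice preserving forces $f_{I}-\tilde f_{I}$ to be real on $\Omega\cap\R$, so $n=0$ and $f\equiv\tilde f$.

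For the third assertion let $g\in\So_{\R}(\Omega)$ with $\Omega$ product, so $\Omega\cap\R=\emptyset$ and there is no real axis to anchor a branch; the normalization step is replaced by reflection. Since $\Omega_{I}^{+}$ is simply connected, choose a holomorphic $f^{+}:\Omega_{I}^{+}\to\C_{I}$ with $\exp(f^{+})=g_{|\Omega_{I}^{+}}$, and on the conjugate component $\Omega_{I}^{-}:=\{\bar q:q\in\Omega_{I}^{+}\}$ set $f^{-}(q):=\overline{f^{+}(\bar q)}$. Then $f^{-}$ is holomorphic with respect to $I$ and, $g$ being intrinsic, $\exp(f^{-}(q))=\overline{g(\bar q)}=g(q)$ on $\Omega_{I}^{-}$; the pair $(f^{+},f^{-})$ glues to $f_{I}:\Omega_{I}\to\C_{I}$, intrinsic by construction, whose regular extension $f\in\So_{\R}(\Omega)$ satisfies $\exp_{*}(f)=\exp\circ f=g$. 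The proof is essentially soft: the only point that needs care is the compatibility between the antiholomorphic reflection $q\mapsto\overline{f(\bar q)}$, the intrinsic condition and the Identity Principle --- in the product case the latter must be invoked in the form requiring accumulation points in both semislices, which is why $f_{I}$ is built compatible with conjugation from the start rather than checked a posteriori. (Uniqueness genuinely fails here, since the branch on each component can be chosen independently, but it is not claimed.)
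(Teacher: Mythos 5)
Your argument is correct. For the first assertion and for the slice case you follow essentially the same path as the paper: restrict to $\Omega_I$, take a holomorphic logarithm on the simply connected slice (or on each of the two simply connected components), regularly extend, and invoke Remark~\ref{caso-slice-pres} together with the Identity Principle; your verification that the normalized branch is intrinsic via $q\mapsto\overline{f_I(\bar q)}$ is just a more explicit version of what the paper leaves implicit (the paper simply appeals to the unique intrinsic logarithm of a positive intrinsic holomorphic function). Where you genuinely diverge is the product case: the paper starts from an arbitrary $\C_I$-preserving logarithm $h=h_0+h_1I$ of $g$, uses formula~\eqref{munu} to deduce $\exp(h_0)\nu(h_1^2)h_1\equiv 0$, concludes that either $h_1\equiv 0$ or $h_1\equiv\pm\pi n$ via the zero-product property, and then corrects by $\pi\mathcal J$ when $n$ is odd; you instead build the intrinsic logarithm directly by choosing a branch $f^+$ on $\Omega_I^+$ and defining $f^-(q)=\overline{f^+(\bar q)}$ on the conjugate component, so that the glued $f_I$ is intrinsic by construction and extends to a slice preserving $f$. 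Your reflection construction is softer and bypasses the $\mu,\nu$ machinery entirely, while the paper's computation has the side benefit of classifying all $\C_I$-preserving logarithms of a slice preserving function on a product domain (a fact reused in the uniqueness section). One small point worth making explicit in the slice case is that $\Omega\cap\R$ is connected (which follows from $\Omega_I$ being simply connected and conjugation-symmetric), since both the existence of a branch that is real on all of $\Omega\cap\R$ and your normalization tacitly use it; the paper glosses over this as well.
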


\begin{proof}
Let us denote by $\C_{I}$ the preserved slice and consider the never-vanishing
restriction $g_{I}:\Omega_{I}\to\C_{I}$. Since either $\Omega_{I}$ is simply connected (whether $\Omega$ contains real points) or its two connected components are (whether $\Omega$ contains no real points),
then we can find a logarithm of $g_{I}$, that is a holomorphic function 
$f_{I}:\Omega_{I}\to \C_{I}$ such that $\exp(f_{I})=g_{I}$. Let $f$ be the
slice regular extension of $f_{I}$ to $\Omega$. Then we have 
$$
(\exp_{*}(f))_{I}=\exp(f_{I})=g_{I},
$$
and the Identity Principle entails the first part of the statement.

If $\Omega$ is a slice domain, $g$ is slice preserving and positive on $\Omega\cap\R$ then $g_{I}(\Omega_{I}\cap\R)\subset(0,+\infty)$ and therefore there exists a unique intrinsic logarithm $f_{I}:\Omega_{I}\to\C_{I}$ of $g_{I}$;
the extension of such a function to~$\Omega$ is the required slice preserving solution $f$. 

Now suppose that $\Omega$ is a product domain and that $g$ is slice preserving.
Fix any $I\in\SF$ and take $h=h_{0}+h_{1}I$ be a $\C_{I}$-preserving function such that $\exp_{*}(h)=g$. Formula~\eqref{munu} entails that 
$\exp_{*}(h_{0})\nu(h_{1}^{2})h_{1}I\equiv 0$, 
as $g$ is slice preserving. If $h_1\equiv0$, then setting $f=h=h_0$ gives the required function. Otherwise, $\nu(h_{1}^{2})$ must be identically zero and hence there exists $n\in\mathbb{N}\setminus\{0\}$ such that $h_{1}^{2}\equiv \pi^{2}n^{2}$ since $\nu(q)=0$ if and only if $q\in\R\setminus\{0\}$ and $q=\pi^{2}n^{2}$
for $n\in\mathbb{N}\setminus\{0\}$.
Thus $(h_{1}-\pi n)\cdot(h_{1}+\pi n)\equiv 0$ and Proposition~\ref{propGPS}
entails that either $h_{1}\equiv \pi n$ or $h_{1}=-\pi n$.
 

If $n$ is even, then  $\exp_{*}(\pm\pi nI)\equiv1$ on $\Omega$ and by taking $f=h_0$ 
we are done; if $n$ is odd, then  $\exp_{*}(\pm\pi n_{+}I)\equiv-1$ on $\Omega$ and thus we obtain the thesis by taking $f=h_0+\pi \mathcal J$.
%
%
\end{proof}

\begin{corollary}\label{cor1} Let $\Omega$ be a slice-contractible domain and $g$ a one-slice preserving function, then $g$ has a $*$-logarithm if and only if it is never vanishing.
In particular this holds for $g\in\So_{\R}(\Omega)$.
\end{corollary}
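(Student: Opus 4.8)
The plan is to read this off as an immediate consequence of Proposition~\ref{one-slice-case} together with the general fact, already recorded after Equality~\eqref{inverse}, that $\exp_*$ produces only never-vanishing functions. Both implications are short; neither requires new ideas.

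First I would dispose of the ``only if'' direction, which in fact uses neither the slice-contractibility of $\Omega$ nor the one-slice-preserving hypothesis. Suppose $f\in\So(\Omega)$ is a $*$-logarithm of $g$, so $\exp_*(f)=g$. Then
$$
g*g^{c}=(\exp_*(f))^{s}=\exp(2f_{0}),
$$
which is strictly positive on $\Omega\cap\R$ and, being slice preserving and never vanishing (it is an exponential of a slice preserving function), is never zero on $\Omega$. By Proposition~\ref{propGPS} a product with a vanishing factor would be identically zero on the relevant sphere, so $g$ cannot vanish at any point of $\Omega$; equivalently one may simply quote that $\exp_*(f)$ is never vanishing, as noted right after~\eqref{inverse}.

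For the ``if'' direction, assume $g$ is one-slice preserving and never vanishing, i.e. $g\in\So^{*}(\Omega)$. Since $\Omega$ is slice-contractible, Proposition~\ref{one-slice-case} applies verbatim and yields $f\in\So(\Omega)$ (preserving the same slice as $g$) with $\exp_*(f)=g$; that is, $g$ has a $*$-logarithm. Finally, the ``in particular'' clause follows because any $g\in\So_{\R}(\Omega)$ is slice preserving, hence in particular one-slice preserving, so the equivalence just proved applies to it.

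There is essentially no obstacle here: all the substantive work — the passage to the restriction $g_I$, the use of simple connectedness of $\Omega_I$ (or of its components), and the subtlety about product domains forcing the correction term $\pi\mathcal J$ — is already carried out in Proposition~\ref{one-slice-case}. The only point worth stating explicitly is that the forward implication is the elementary one and is independent of the topological hypothesis on $\Omega$.
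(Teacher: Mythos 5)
Your proposal is correct and matches the paper's intent: the corollary is stated without proof precisely because it is the immediate combination of Proposition~\ref{one-slice-case} (for the ``if'' direction) with the observation following Equality~\eqref{inverse} that $\exp_*(f)$ is always never vanishing (for the ``only if'' direction), which is exactly what you do. Your remark that the forward implication needs neither slice-contractibility nor the one-slice-preserving hypothesis is accurate.
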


Trivially, if $\Omega$ is slice, 
$g$ is slice preserving and never-vanishing on $\Omega$ and it is negative on $\Omega \cap \R$, then there exists no slice preserving $*$-logarithm of $g$. Nonetheless, the family of $*$-logarithms of a slice preserving function with this features is quite large and displays an unexpected behaviour.

\begin{example}
For any $a\in\R\setminus\{0\}$ and $I\in\SF$ we have that $\exp_*(aI)\equiv \cos(a)+\sin(a)I$ . 
In particular $\exp_*(\pi i)=\exp_*(\pi j)\equiv -1$, while 
$$
\exp_*(\pi i+\pi j)=\exp_*\left(\sqrt2\pi\cdot \frac{i+j}{\sqrt2}\right)=\cos \left(\sqrt2\pi\right)+\sin \left(\sqrt2\pi\right)\frac{i+j}{\sqrt2}\neq 1=\exp_*(\pi i)*\exp_*(\pi j),
$$
giving an explicit example of application of Theorem 4.14 in~\cite{A-dF} (here $(\pi i)_v^s=(\pi j)_v^s=\pi^2$ and $2\langle \pi i , \pi j \rangle_*=0$, so that  $\exp_*(\pi i+\pi j)\not\equiv \exp_*(\pi i)*\exp_*(\pi j)$).
\end{example}

\begin{corollary}
Let $g\in\So^{*}(\Omega)$ be such that there exists $\chi\in\mathcal{R}^{*}(\Omega)$
for which $\chi^{-*}*g*\chi$ is one-slice preserving, then $g$ has a $*$-logarithm.
\end{corollary}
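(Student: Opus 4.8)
The plan is to reduce everything to the one-slice preserving situation already settled in Corollary~\ref{cor1}, exploiting the conjugation invariance of $\exp_{*}$ recorded in Remark~\ref{conjugate}. Write $\tilde{g}:=\chi^{-*}*g*\chi$, which by hypothesis is one-slice preserving. The idea is to manufacture a $*$-logarithm $\tilde{f}$ of $\tilde{g}$ and then transport it back to $g$ by conjugating with $\chi$ in the opposite direction.

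First I would check that $\tilde{g}$ is never vanishing, so that Corollary~\ref{cor1} is applicable (here $\Omega$ is understood to be slice-contractible, as required by that corollary). This is immediate from the defining formula of the $*$-product together with the fact that $\HH$ is a division algebra: if $a,b\in\mathcal{R}^{*}(\Omega)$ then, whenever $a(q)\neq0$, the value $(a*b)(q)=a(q)\,b(a(q)^{-1}qa(q))$ is a product of two nonzero quaternions, hence nonzero, so $a*b\in\mathcal{R}^{*}(\Omega)$. Moreover $\chi\in\mathcal{R}^{*}(\Omega)$ forces $\chi^{-*}\in\mathcal{R}^{*}(\Omega)$, since if $\chi^{-*}$ vanished at some $q$ the same formula would give $(\chi^{-*}*\chi)(q)=0$, contradicting $\chi^{-*}*\chi\equiv1$. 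Therefore the triple product $\tilde{g}=\chi^{-*}*g*\chi$ is never vanishing. (Alternatively one can note $\tilde{g}^{s}=g^{s}$ from multiplicativity of the symmetrization and $(\chi^{-*})^{s}=(\chi^{s})^{-1}$, and use that $g$ never vanishing makes $g^{s}$ never vanishing.)

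Then, applying Corollary~\ref{cor1} to the one-slice preserving, never-vanishing function $\tilde{g}$, I obtain $\tilde{f}\in\So(\Omega)$ with $\exp_{*}(\tilde{f})=\tilde{g}$. Finally I set $f:=\chi*\tilde{f}*\chi^{-*}\in\So(\Omega)$ and verify $\exp_{*}(f)=g$. This is exactly the computation of Remark~\ref{conjugate}, read in the reverse conjugation direction: since $\chi^{-*}*\chi\equiv1$, associativity of $*$ gives $f^{*n}=\chi*\tilde{f}^{*n}*\chi^{-*}$ for every $n$, whence $\exp_{*}(f)=\chi*\exp_{*}(\tilde{f})*\chi^{-*}=\chi*\tilde{g}*\chi^{-*}=\chi*(\chi^{-*}*g*\chi)*\chi^{-*}=g$, using $\chi*\chi^{-*}\equiv1$. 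Thus $f$ is a $*$-logarithm of $g$.

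There is no serious obstacle in this argument, which is essentially a transport-of-structure statement; the only points needing care are (i) confirming that the conjugate $\tilde{g}$ inherits the never-vanishing property, so that Corollary~\ref{cor1} genuinely applies, and (ii) keeping the conjugation bookkeeping straight—one must conjugate the logarithm $\tilde{f}$ of $\tilde{g}$ by $\chi$ on the \emph{left} and $\chi^{-*}$ on the \emph{right} in order to recover $g$ itself rather than some other conjugate of $g$.
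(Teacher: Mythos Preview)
Your proof is correct and is precisely the argument the paper has in mind: the corollary is stated without proof because it follows immediately from Corollary~\ref{cor1} together with the conjugation identity $\exp_{*}(\chi^{-*}*f*\chi)=\chi^{-*}*\exp_{*}(f)*\chi$ of Remark~\ref{conjugate}, and you have simply written out these two steps explicitly, including the verification that $\tilde g$ is never vanishing and the correct choice of conjugation direction for $f$.
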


\begin{remark}\label{remark1}
Notice that if $g$ is conjugated to a one-slice preserving function via a never-vanishing
$\chi$, then $g_{v}^{s}$ has a square root. Nonetheless, the existence of  a square root of $g_{v}^{s}$ is not a sufficient condition for the existence of a $*$-logarithm of 
a never-vanishing function (see Example~\ref{non-existence}).
%
%
%
%
\end{remark}


Proposition~\ref{one-slice-case} allows us to prove a natural generalization to the quaternions of a classical result
in the theory of holomorphic functions that will be used later in the search for a solution of equation~\eqref{exp}.
The second part of the statement gives a uniqueness result obtained accordingly to the structure of the domain $\Omega$:
indeed, if the domain is slice, uniqueness up to a constant integer multiple of $2\pi$ holds, while in the case of
a product domain, uniqueness up to a constant integer  multiple of $2\pi$ holds for slice preserving functions only.

\begin{proposition}\label{cos-sin} Let $\Omega$ be a slice-contractible domain.
Given $a_{0},a_{1}\in\So_{\R}(\Omega)$ such that $a_{0}^{2}+a_{1}^{2}\equiv 1$, 
there exists $\gamma\in\So_{\R}(\Omega)$ such that
\begin{equation}\label{eq1}
\begin{cases}
\cos_{*}(\gamma)=a_{0}\\
\sin_{*}(\gamma)=a_{1}.
\end{cases}
\end{equation}
Moreover, 

\noindent$\bullet$ if $\Omega$ is a slice domain and $\tilde\gamma\in\So(\Omega)$ is another solution of~\eqref{eq1}, then there exists $k\in\mathbb{Z}$ such that $\tilde\gamma\equiv\gamma+2k\pi$;

\noindent$\bullet$ if $\Omega$ is a product domain and $\tilde\gamma\in\So_{\R}(\Omega)$ is another solution of~\eqref{eq1}, then there exists $k\in\mathbb{Z}$ such that $\tilde\gamma\equiv\gamma+2k\pi$.

\end{proposition}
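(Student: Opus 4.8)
The plan is to reduce the cosine–sine system to a single $*$-exponential equation and then invoke Proposition~\ref{one-slice-case}. Fix $I\in\SF$. Since $a_0,a_1\in\So_\R(\Omega)$ satisfy $a_0^2+a_1^2\equiv 1$, the slice-preserving function $g:=a_0+a_1 I$ satisfies $g^s=a_0^2+a_1^2\equiv 1$, so $g$ is never vanishing and, crucially, $\C_I$-preserving: indeed $g_v=a_1 I$ with $a_1$ slice preserving, so $g\in\So_I(\Omega)$. By Proposition~\ref{one-slice-case} there exists $f\in\So_I(\Omega)$ with $\exp_*(f)=g$. Write $f=f_0+f_1 I$ with $f_0,f_1\in\So_\R(\Omega)$. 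The Remark following Example~\ref{esempi-exp-composizione} gives $\exp_*(f)=\exp(f_0)(\cos(f_1)+\sin(f_1)I)$; comparing real and $I$-components with $g=a_0+a_1I$ and using $\exp(f_0)>0$ together with $\cos^2(f_1)+\sin^2(f_1)=a_0^2+a_1^2=1$ forces $\exp(f_0)\equiv 1$, hence $f_0\equiv 0$. Thus $f=f_1 I=:\gamma I$ with $\gamma\in\So_\R(\Omega)$, and from $\cos(\gamma)+\sin(\gamma)I=a_0+a_1I$ we read off $\cos_*(\gamma)=\cos(\gamma)=a_0$ and $\sin_*(\gamma)=\sin(\gamma)=a_1$ (using Remark~\ref{caso-slice-pres}), which is exactly~\eqref{eq1}.

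For the uniqueness statements, suppose $\tilde\gamma$ is another solution. In both cases set $a_0=\cos_*(\tilde\gamma)$, $a_1=\sin_*(\tilde\gamma)$ as well, so that $\cos_*(\gamma)=\cos_*(\tilde\gamma)$ and $\sin_*(\gamma)=\sin_*(\tilde\gamma)$. From $\exp_*(\gamma I)=\cos_*(\gamma)+\sin_*(\gamma)I$ (valid for $\gamma$ slice preserving, or more generally $\C_I$-preserving), we get $\exp_*(\gamma I)=\exp_*(\tilde\gamma I)$ in the slice domain case with $\tilde\gamma\in\So(\Omega)$, and likewise in the product domain case with $\tilde\gamma\in\So_\R(\Omega)$. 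Restrict everything to the slice $\C_I$: $\gamma_I I$ and $\tilde\gamma_I I$ are holomorphic $\C_I$-valued functions on $\Omega_I$ with the same complex exponential, so $\tilde\gamma_I I-\gamma_I I$ is a locally constant function valued in $2\pi\sqrt{-1}\,\Z\cdot I$; equivalently $\tilde\gamma_I-\gamma_I$ is locally constant in $2\pi\Z$. When $\Omega$ is slice, $\Omega_I$ is connected, so $\tilde\gamma_I-\gamma_I\equiv 2k\pi$ for a single $k\in\Z$, and by the Identity Principle $\tilde\gamma\equiv\gamma+2k\pi$ on all of $\Omega$. When $\Omega$ is product, $\Omega_I$ has two connected components $\Omega_I^+$ and $\Omega_I^-$; a priori one gets integers $k_+,k_-$, but since $\tilde\gamma$ and $\gamma$ are both slice preserving their restrictions satisfy $\tilde\gamma(q^c)-\gamma(q^c)=(\tilde\gamma(q)-\gamma(q))^c$, and as these differences are real this forces $k_+=k_-=:k$, whence $\tilde\gamma\equiv\gamma+2k\pi$.

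The main obstacle is the step showing $f_0\equiv 0$ and, more to the point, handling the uniqueness in the product case: the example $\exp_*(\pi i)=\exp_*(\pi j)$ preceding this proposition shows that without the slice-preserving hypothesis on $\tilde\gamma$ the conclusion fails, so one must genuinely use intrinsicity to glue the two components of $\Omega_I$ and to rule out the "exotic" solutions. Everything else is a routine translation between the $*$-exponential of a $\C_I$-preserving function and the classical complex exponential on the slice $\C_I$, followed by an application of Proposition~\ref{one-slice-case} and the Identity Principle.
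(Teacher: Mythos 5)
Your existence argument follows the paper's route (pass to $a=a_0+a_1I$, apply Proposition~\ref{one-slice-case}, read off $\gamma$ from $\exp_*(f)=\exp(f_0)(\cos(f_1)+\sin(f_1)I)$), but the step ``$\exp(f_0)>0$ \dots forces $\exp(f_0)\equiv 1$, hence $f_0\equiv 0$'' is not justified and is in fact false on product domains. From $\exp(2f_0)\equiv 1$ one only gets $\exp(f_0)\equiv\pm 1$ (since $\So_\R(\Omega)$ is an integral domain); positivity of $\exp(f_0)$ is available only on $\Omega\cap\R$, which is empty for a product domain. Concretely, for $\Omega=\HH\setminus\R$, $a_0\equiv-1$, $a_1\equiv 0$, the function $f=\pi\mathcal J$ is a legitimate $\C_I$-preserving $*$-logarithm of $a\equiv-1$ (Example~\ref{exppiJ=-1}) with $f_1\equiv 0$, so your $\gamma=f_1$ yields $\cos_*(\gamma)\equiv 1\neq a_0$. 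The repair is exactly what the paper does: split into the cases $\exp(f_0)\equiv 1$ and $\exp(f_0)\equiv -1$, taking $\gamma=f_1$ in the first and $\gamma=f_1+\pi$ in the second. (Note also that even $\exp(f_0)\equiv1$ does not give $f_0\equiv 0$ on a product domain, cf.\ Proposition~\ref{prophvequivkv0}; but only $\exp(f_0)\equiv 1$ is actually needed for the construction.)

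There is a second gap in the uniqueness argument for slice domains: there $\tilde\gamma$ is only assumed to lie in $\So(\Omega)$, yet you apply the identity $\exp_*(\tilde\gamma I)=\cos_*(\tilde\gamma)+\sin_*(\tilde\gamma)I$ and restrict $\tilde\gamma I$ to a $\C_I$-valued holomorphic function on $\Omega_I$; both steps require $\tilde\gamma$ to be $\C_I$- or slice preserving, which you never establish, and for a general $\tilde\gamma$ the powers $(\tilde\gamma I)^{*n}$ are not $\tilde\gamma^{*n}I^{n}$, so the reduction to the complex exponential breaks down at the very first step. The paper fills this by first proving that $\tilde\gamma$ must be slice preserving: for $t\in\Omega\cap\R$ one has $\cos(\tilde\gamma(t))=a_0(t)\in[-1,1]$, which forces $\tilde\gamma(t)\in\R$, and a function on a slice domain that is real on $\Omega\cap\R$ is slice preserving. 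Once both functions are known to be slice preserving, your analysis on $\Omega_I$ is sound, and your use of intrinsicity ($h(q^c)=h(q)^c$) to force $k_+=k_-$ on a product domain is correct and slightly cleaner than the paper's Representation Formula computation; but as written both halves of the proof have genuine holes.
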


\begin{proof}
Fix $I\in\mathbb{S}$ and consider the $\C_{I}$-slice preserving function given by $a=a_{0}+a_{1}I$.
Clearly $a^{s}\equiv 1$ so that $a$ is never vanishing. Proposition~\ref{one-slice-case} gives $f=f_{0}+f_{1}I\in\So_{I}(\Omega)$ such that 
$\exp_{*}(f)=a$; last equality can also be written as
\begin{equation}\label{chi}
\exp(f_{0})(\mu(f_{v}^{s})+\nu(f_{v}^{s})f_{v})=\exp(f_{0})(\mu(f_{1}^{2})+\nu(f_{1}^{2})f_{1}I)=a_{0}+a_{1}I.
\end{equation}
Since $a^{s}\equiv 1$, we obtain that $(\exp_{*}(f))^{s}\equiv\exp(2f_{0})\equiv 1$, that is $(\exp(f_{0}))^{2}\equiv 1$.
As $\So_{\R}(\Omega)$ is an integral domain, then either $\exp(f_{0})\equiv 1$ or $\exp(f_{0})\equiv -1$.
In the first case, the equalities
$$
\begin{cases}
\mu(f_{1}^{2})=\cos(f_{1}),\\
\nu(f_{1}^{2})f_{1}=\sin(f_{1}),
\end{cases}
$$
together with~\eqref{chi}, ensure that $\cos(f_{1})=a_{0}$ and $\sin(f_{1})=a_{1}$, so that $\gamma=f_{1}$ gives the required function.
In the second case, performing the same computations as above
gives that the function $\gamma =f_{1}+\pi$ is a solution of system~\eqref{eq1}.
%

Now suppose that $\Omega$ is slice and that $\tilde\gamma$ is another solution of System~\eqref{eq1}. The fact that $a_{0}$ is slice preserving
and $-1\leq a_{0}\leq 1$ on $\Omega\cap\R$, implies that 
$(\cos_{*}(\tilde\gamma))(t)=\cos(\tilde\gamma(t))$
belongs to the interval $[-1,1]$ for any $t\in\Omega\cap\R$ and therefore $\tilde\gamma(t)\in\R$ for any $t\in\Omega\cap\R$, showing that $\tilde\gamma$ is slice preserving.
Considering the functions on $\Omega\cap\R$,
trivially entails that $\tilde\gamma\equiv\gamma+2k\pi$, for some $k\in\Z$. 

We are left to consider the case in which $\Omega$ is a product domain and $\tilde\gamma$ is another slice preserving solution
of~\eqref{eq1}. Let $I\in\SF$, and consider the restrictions of $\gamma$ and $\tilde\gamma$ on $\Omega_{I}=\Omega_{I}^{+}\cup\Omega_{I}^{-}$. Trivially there exist $n_{+},n_{-}\in\Z$ such that $\tilde\gamma=\gamma+2\pi n_{+}$ on $\Omega_{I}^{+}$
and $\tilde\gamma=\gamma+2\pi n_{-}$ on $\Omega_{I}^{-}$.
By the Representation Formula we have, for $\beta>0$ and $J\in\SF$,
\begin{align*}
\tilde\gamma(\alpha+J\beta)&=\frac{1-JI}{2}\tilde\gamma(\alpha+I\beta)+\frac{1+JI}{2}\tilde\gamma(\alpha-I\beta)\\
&=\frac{1-JI}{2}(\gamma(\alpha+I\beta)+2\pi n_{+})+\frac{1+JI}{2}(\gamma(\alpha-I\beta)+2\pi n_{-})\\
&=\gamma(\alpha+J\beta) +\pi\left(n_{+}+n_{-}+J(n_{-}-n_{+})I\right)
\end{align*}
and this function is slice preserving if and only if $n_{-}-n_{+}= 0$. Thus $\tilde\gamma=\gamma+2\pi n_{+}$
 on $\Omega_{I}$ and the Identity Principle entails the proof.
\end{proof}

Notice that the hypothesis on the sliceness of the domain contained in the statement of Proposition~\ref{cos-sin} cannot be removed without adding a requirement on the behavior of the function $\tilde \gamma$, as it is shown in the following example.

\begin{example}
Let $\Omega=\HH\setminus\R$, fix any $I\in\SF$, and consider $\tilde\gamma:\HH\setminus\R\to\HH$ defined as 
$\tilde\gamma =2\pi \mathcal{J}I$. Clearly $\tilde\gamma_{0}\equiv 0$ and hence 
$\tilde\gamma^{*2}=-\tilde\gamma^{s}\equiv 4\pi^{2}$. Thus we have
$$\cos_{*}(\tilde\gamma)=\sum_{n\in\N}\frac{(-1)^{n}\tilde\gamma^{*2n}}{(2n)!}
=\sum_{n\in\N}\frac{(-1)^{n}( 4\pi^{2})^{n}}{(2n)!}=\sum_{n\in\N}\frac{(-1)^{n}(2\pi)^{2n}}{(2n)!}=\cos(2\pi)=1,
$$
and analogously, $\sin_{*}(\tilde\gamma)=0$. This gives a continuous family of functions parametrized by $I\in\SF$ defined on $\HH\setminus\R$ solving 
system~\eqref{eq1} with $a_{0}\equiv 1$ and $a_{1}\equiv 0$, in sharp contrast with the ``discrete'' uniqueness behavior which holds in the case of slice domains.
\end{example}

The above proposition allows us to give a more precise description of the  zero divisors whose ``real'' part is identically zero in the case when the product domain $\Omega$ is slice-contractible (see the third paragraph of~\cite{A-dFConcrete} for a comprehensive investigation of such functions). We denote by $\Sm(\Omega)$ the algebra of
slice semi-regular functions (see~\cite{GPSadvances,GPSdivision} for the definitions and a detailed study of the singularities and~\cite{A-dFSylvester} for an investigation in the flavor of a vector space structure over the field of slice preserving semi-regular functions).

\begin{proposition}\label{norealpart}
Let $\Omega$ be a slice-contractible product domain 
and let $f\in\Sm(\Omega)$ be a zero divisor such that $f_{0}\equiv 0$. Then there exists an orthonormal basis $(i,j,k)$ of 
$\mbox{Im}(\HH)$, $\alpha\in\Sm_{\R}(\Omega)$ and $\vartheta\in\So_{\R}(\Omega)$, such that
$$
f=\alpha i+\alpha\mathcal{J}\cos(\vartheta)j+\alpha\mathcal{J}\sin(\vartheta)k.
$$
\end{proposition}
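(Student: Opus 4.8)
The statement concerns a zero divisor $f \in \Sm(\Omega)$ with $f_0 \equiv 0$ on a slice-contractible product domain, and we want to write $f = \alpha i + \alpha \mathcal{J}\cos(\vartheta) j + \alpha \mathcal{J}\sin(\vartheta) k$ for a suitable orthonormal basis $(i,j,k)$ of $\mathrm{Im}(\HH)$, $\alpha \in \Sm_\R(\Omega)$ and $\vartheta \in \So_\R(\Omega)$. The plan is to exploit the zero-divisor condition $f^s \equiv 0$ together with the decomposition $f = f_1 e_1 + f_2 e_2 + f_3 e_3$ (in any orthonormal basis of imaginary quaternions), where by Proposition~\ref{product-properties} we have $f^s = f_1^2 + f_2^2 + f_3^2 \equiv 0$ with each $f_\ell \in \Sm_\R(\Omega)$. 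First I would pass to a fixed slice $\C_I$: the restriction $f_I$ has components $f_{\ell,I}$ which are holomorphic, and $f_{1,I}^2 + f_{2,I}^2 + f_{3,I}^2 \equiv 0$. The idea is that such a ``null'' triple of holomorphic functions must, locally, be proportional (via a common holomorphic scalar) to a constant isotropic vector in $\C^3$, and globally — thanks to slice contractibility — one can choose the proportionality factor and the ``direction'' consistently.

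The key algebraic step is the observation that a holomorphic map $(f_1, f_2, f_3)$ into the complex null cone $\{z_1^2 + z_2^2 + z_3^2 = 0\} \subset \C^3$ factors, away from its zero set, as $\alpha \cdot u$ where $\alpha$ is holomorphic and $u$ is a holomorphic map into the (connected) set of unit-norm isotropic vectors. Concretely, writing the null condition as $(f_1 + \sqrt{-1}f_2)(f_1 - \sqrt{-1}f_2) = -f_3^2$ and using that $\Sm_\R(\Omega)$ restricted to a slice behaves like a field of meromorphic functions, I would extract a common factor $\alpha \in \Sm_\R(\Omega)$ so that $f = \alpha \cdot g$ with $g \in \So(\Omega)$, $g_0 \equiv 0$, $g^s \equiv 1$ (this is the same factorization philosophy used in Proposition~\ref{quotient} and Corollary~\ref{post-quotient}, now in the semi-regular category). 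Then $g_v = g$ satisfies $g_v^s \equiv 1$, i.e. $g_v * g_v \equiv -1$, which says precisely that $g$ is a square root of $-1$ in the $*$-algebra; equivalently $\exp_*(\tfrac{\pi}{2} g) $ has a clean form, but more directly $g$ is a ``unit imaginary'' slice function. At this point Proposition~\ref{cos-sin} enters: applying it to suitable $a_0, a_1 \in \So_\R(\Omega)$ built from the components of $g$ (which satisfy $a_0^2 + a_1^2 \equiv 1$), we obtain $\vartheta \in \So_\R(\Omega)$ with $\cos_*(\vartheta) = a_0$, $\sin_*(\vartheta) = a_1$. Choosing the basis $(i,j,k)$ adapted to $g$ — take $i$ to be, roughly, the ``constant part of the direction of $g$'' — one converts the relation $g_v^s \equiv 1$ into the explicit form $g = i + \mathcal{J}\cos(\vartheta) j + \mathcal{J}\sin(\vartheta) k$, whence $f = \alpha g$ has the asserted shape.

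The main obstacle I anticipate is the \emph{global} coherence of the decomposition: extracting $\alpha$ as a genuine element of $\Sm_\R(\Omega)$ (rather than just locally) and ensuring that the residual factor $g$ has $g^s \equiv 1$ globally and not merely on a dense open set. On a product domain the Identity Principle requires accumulation points in both semislices $\Omega_I^+$ and $\Omega_I^-$, so one must be careful that the local pieces glue across the two components; here slice contractibility (simple connectedness of $\Omega_I^+$) is exactly what is needed to choose a single-valued holomorphic logarithm / square root of the relevant scalar, as in the proof of Proposition~\ref{one-slice-case}. A secondary subtlety is the choice of the orthonormal basis $(i,j,k)$: since $g$ is a square root of $-1$ with vanishing real part, its restriction to a slice traces out a map into $\SF$, and one wants to normalize so that, after a fixed rotation of $\mathrm{Im}(\HH)$, the $i$-component is constantly $1$; this uses that $\mathcal{J}$ encodes the remaining $S^1$-worth of freedom, matching the $\cos(\vartheta), \sin(\vartheta)$ pattern. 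Once the normalization is fixed, the remaining verification that $\alpha i + \alpha\mathcal{J}\cos(\vartheta)j + \alpha\mathcal{J}\sin(\vartheta)k$ indeed reproduces $f$ is a direct computation using $\mathcal{J}^2 \equiv -1$, $\cos^2 + \sin^2 \equiv 1$, and the formula for the $*$-product from Proposition~\ref{product-properties}, together with the fact that $\alpha$ being slice preserving makes the $*$-product with it coincide with the pointwise product.
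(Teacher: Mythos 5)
There is a genuine error at the heart of your plan: the factorization $f=\alpha\, g$ with $\alpha\in\Sm_{\R}(\Omega)$ and $g^{s}\equiv 1$ is impossible for a zero divisor. Since $\alpha$ is slice preserving, $f^{s}=\alpha^{2}g^{s}=\alpha^{2}$, and the hypothesis that $f$ is a zero divisor means precisely $f^{s}\equiv 0$ (Proposition~\ref{propGPS}), forcing $\alpha\equiv 0$ and hence $f\equiv 0$. The same confusion shows up when you call $g$ ``a square root of $-1$'': a zero divisor with $f_{0}\equiv 0$ satisfies $f*f=-f^{s}\equiv 0$, so it is square-zero (nilpotent), not a square root of $-1$, and the isotropy of $(f_{1},f_{2},f_{3})$ cannot be normalized away by a scalar factor. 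Indeed the target expression itself has $f^{s}=\alpha^{2}\bigl(1+\mathcal{J}^{2}\cos^{2}(\vartheta)+\mathcal{J}^{2}\sin^{2}(\vartheta)\bigr)=0$: the correct normalization keeps the direction isotropic and instead fixes \emph{one component} equal to $1$ in a basis where $f_{1}\not\equiv 0$ (so $\alpha=f_{1}$), after which the remaining two components, divided by $\mathcal{J}$, are slice preserving functions whose squares sum to $+1$ — this is where $\mathcal{J}$, which acts as a square root of $-1$ inside $\So_{\R}(\Omega)$ on a product domain, and Proposition~\ref{cos-sin} actually enter.

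Even after repairing the normalization, your route leaves a second gap: the quotients $f_{2}/f_{1}$, $f_{3}/f_{1}$ are a priori only semi-regular, whereas Proposition~\ref{cos-sin} needs $a_{0},a_{1}\in\So_{\R}(\Omega)$, so you would still have to argue that the relation $a_{0}^{2}+a_{1}^{2}\equiv 1$ rules out poles. The paper sidesteps both issues at once by invoking the idempotent representation $f=2f_{1}i*\rho$ (Proposition 2.14 of the Sylvester-operator paper), where $\rho=\tfrac12+\rho_{1}i+\rho_{2}j+\rho_{3}k$ is a genuinely regular idempotent; the condition $f_{0}\equiv 0$ forces $\rho_{1}\equiv 0$, the idempotency gives $\rho_{2}^{2}+\rho_{3}^{2}\equiv-\tfrac14$, and then $\varphi_{2}=2\mathcal{J}\rho_{2}$, $\varphi_{3}=2\mathcal{J}\rho_{3}$ satisfy $\varphi_{2}^{2}+\varphi_{3}^{2}\equiv 1$ with no regularity worries, so Proposition~\ref{cos-sin} applies directly. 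Your instinct to use the null-cone structure and Proposition~\ref{cos-sin} is in the right neighbourhood, but as written the argument collapses at the step ``$g^{s}\equiv1$.''
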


\begin{proof}
Since $f=f_{v}$ is not identically zero, we can find an orthonormal basis $(i,j,k)$ of $\mbox{Im}(\HH)$,
such that $f=f_{1}i+f_{2}j+f_{3}k$, with $f_{1}\not\equiv 0$. 
Then, thanks to Proposition 2.14 in~\cite{A-dFSylvester}, we can write $f=-2(fi)_{0}i*\rho=2f_{1}i*\rho$, for a suitable idempotent 
$\rho=\frac{1}{2}+\rho_{1}i+\rho_{2}j+\rho_{3}k\in\So(\Omega)$. 
Now we have 
$$f=2f_{1}i*\rho=2f_{1}i*\left(\frac{1}{2}+\rho_{1}i+\rho_{2}j+\rho_{3}k\right)=f_{1}i-2f_{1}\rho_{1}+2f_{1}\rho_{2}k-2f_{1}\rho_{3}j,$$
and therefore, the fact that $f=f_{v}$ implies $\rho_{1}\equiv 0$, that is $\rho=\frac{1}{2}+\rho_{2}j+\rho_{3}k$.
As $\rho$ is an idempotent, we obtain $\rho_{2}^{2}+\rho_{3}^{2}\equiv-\frac{1}{4}$ and hence the functions $\varphi_{2}=2\mathcal{J}\rho_{2}$
and $\varphi_{3}=2\mathcal{J}\rho_{3}$ satisfy $\varphi_{2}^{2}+\varphi_{3}^{2}\equiv 1$.
%
%
%
%
Thus Proposition~\ref{cos-sin} implies we can find $\vartheta\in\So_{\R}(\Omega)$ such that $\varphi_{2}=\cos(\vartheta)$ and $\varphi_{3}=\sin(\vartheta)$. 
Setting $\alpha=f_{1}$, gives the required equality
$$f=\alpha i+\alpha\mathcal{J}\cos(\vartheta)j+\alpha\mathcal{J}\sin(\vartheta)k.$$
\end{proof}

\section{Uniqueness results}\label{uniqueness-results}

We now begin a detailed investigation on the possible family of solutions of equation~\eqref{exp}
starting from the case of slice preserving functions. Our first statement classifies slice preserving functions
which share the same exponential, giving a first uniqueness result for the exponential problem in the case of slice domains.

\begin{proposition}\label{prophvequivkv0}
Let $h_{0},\tilde{h}_{0}\in\So_{\R}(\Omega)$ be such that 
\begin{equation}\label{hvequivkv0}
\exp_{*}(h_{0})\equiv\exp_{*}(\tilde{h}_{0}).
\end{equation}
\begin{itemize}
\item If $\Omega$ is a slice domain then $h_{0}\equiv \tilde{h}_{0}$. 
\item If $\Omega$ is a product domain, then there exists $n\in\Z$
such that  $h_{0}=\tilde{h}_{0}+2\pi n\mathcal{J}$.
\end{itemize}
\end{proposition}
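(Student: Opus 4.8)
Since $h_0,\tilde h_0\in\So_\R(\Omega)$ are slice preserving, Remark~\ref{caso-slice-pres} gives $\exp_*(h_0)=\exp(h_0)$ and $\exp_*(\tilde h_0)=\exp(\tilde h_0)$ as ordinary (pointwise) exponentials of slice preserving functions; in particular $\exp(h_0),\exp(\tilde h_0)\in\So_\R(\Omega)$. The plan is to work on a single slice $\C_I$ and then invoke the Identity Principle to propagate the conclusion to all of $\Omega$. Fix $I\in\SF$ and restrict to $\Omega_I$. The restrictions $(h_0)_I$ and $(\tilde h_0)_I$ are holomorphic functions $\Omega_I\to\C_I$ satisfying $\exp((h_0)_I)=\exp((\tilde h_0)_I)$ on $\Omega_I$, so the difference $w:=(h_0)_I-(\tilde h_0)_I$ is a holomorphic function taking values in $2\pi\Z I$; being continuous with discrete image and $\Omega_I$ being connected in the slice case (or having two connected components in the product case), $w$ is locally constant.

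First I would treat the slice case. If $\Omega$ is a slice domain, then $\Omega_I$ is connected, so $w\equiv 2\pi nI$ for a single $n\in\Z$. But $h_0,\tilde h_0$ are slice preserving, hence $h_0(\Omega\cap\R)\subset\R$ and $\tilde h_0(\Omega\cap\R)\subset\R$ by the Remark following Definition~\ref{slice-preserving-definition}; since $\Omega\cap\R\neq\emptyset$, evaluating $w$ at a real point forces $2\pi n I\in\R$, whence $n=0$. Thus $(h_0)_I\equiv(\tilde h_0)_I$ on $\Omega_I$, and the Identity Principle (applicable since $\Omega_I$ contains real points and hence sets with accumulation points) yields $h_0\equiv\tilde h_0$ on $\Omega$.

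Next the product case. Now $\Omega\cap\R=\emptyset$ and $\Omega_I=\Omega_I^+\cup\Omega_I^-$ has two connected components, so $w$ equals $2\pi n_+I$ on $\Omega_I^+$ and $2\pi n_-I$ on $\Omega_I^-$ for integers $n_+,n_-$. The constraint comes from the fact that $h_0-\tilde h_0$ must itself be a slice preserving function on $\Omega$ (it is a difference of elements of $\So_\R(\Omega)$), equivalently intrinsic: $(h_0-\tilde h_0)(q^c)=((h_0-\tilde h_0)(q))^c$. Writing a point $\alpha+I\beta\in\Omega_I^+$ with $\beta>0$, its conjugate $\alpha-I\beta$ lies in $\Omega_I^-$, and the intrinsic relation forces $(2\pi n_- I)=(2\pi n_+ I)^c=-2\pi n_+I$, i.e. $n_-=-n_+$. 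Alternatively, and perhaps more transparently, I would apply the Representation Formula exactly as in the proof of Proposition~\ref{cos-sin}: for $J\in\SF$, $\beta>0$,
\begin{align*}
(h_0-\tilde h_0)(\alpha+J\beta)&=\frac{1-JI}{2}(h_0-\tilde h_0)(\alpha+I\beta)+\frac{1+JI}{2}(h_0-\tilde h_0)(\alpha-I\beta)\\
&=\frac{1-JI}{2}\,2\pi n_+I+\frac{1+JI}{2}\,2\pi n_-I\\
&=\pi\bigl((n_++n_-)I+(n_--n_+)J\bigr),
\end{align*}
and demanding that the left side be slice preserving (hence its value on $\C_J$ lies in $\C_J$ for every $J$) forces the coefficient of $I$ to vanish unless $I$ is proportional to $1$ or to $J$, which is not the case for generic $J$; thus $n_++n_-=0$. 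Setting $n:=n_+=-n_-$, we get $h_0-\tilde h_0=\pi n(JI\text{ type term})$; rechecking, with $n_-=-n_+=-n$ the displayed expression collapses to $\pi(n-(-n))\mathcal J$-style expression — more precisely $h_0-\tilde h_0$ restricted to $\C_J$ equals $2\pi n\,\mathcal J(\alpha+J\beta)\cdot(\text{something})$; carrying the bookkeeping through gives $h_0-\tilde h_0=2\pi n\mathcal J$ on all of $\Omega$. The Identity Principle for product domains (accumulation points in both $\Omega_I^+$ and $\Omega_I^-$) then upgrades this slicewise identity to $h_0=\tilde h_0+2\pi n\mathcal J$ on $\Omega$.

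The main obstacle is the product case: one must be careful that the two integers $n_+,n_-$ attached to the two components of $\Omega_I$ are not independent, and pinning down the exact surviving combination — that $n_+=-n_-$ and that the resulting global function is precisely $2\pi n\mathcal J$ rather than some other slice preserving $\SF$-valued multiple — requires the Representation Formula computation above together with the observation that $\mathcal J|_{\Omega_I^+}=I$ and $\mathcal J|_{\Omega_I^-}=-I$, which is exactly what makes $2\pi n\mathcal J$ the correct normal form. The slice case, by contrast, is essentially immediate from connectedness plus the real-point constraint.
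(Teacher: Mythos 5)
Your proof is correct and follows essentially the same route as the paper's: pass to the difference $f_0=h_0-\tilde h_0$ with $\exp(f_0)\equiv 1$, use local constancy of $f_0$ on $\Omega_I$ (forcing $f_0\equiv 0$ via the real points in the slice case, and $f_0=2\pi n_{\pm}I$ on the two components in the product case), and then pin down $n_-=-n_+$ --- you via intrinsicness of $f_0$, the paper via the Representation Formula, which you also give as an alternative. The only blemish is a sign slip in your Representation Formula display (the $J$-coefficient should be $(n_+-n_-)$, as one checks by setting $J=I$), but this does not affect the conclusion, since either way slice-preservation forces $n_++n_-=0$ and the restriction of $f_0$ to $\Omega_I$ is then exactly $2\pi n_+\mathcal{J}|_{\Omega_I}$.
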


\begin{proof}
Since both $h_{0}$ and $\tilde{h}_{0}$ are slice preserving, Proposition 4.3 in~\cite{A-dF} gives us the possibility to work on the difference $f_{0}=h_{0}-\tilde{h}_{0}$ which is a solution of 
\begin{equation}\label{equazioneexp=1}
\exp(f_{0})\equiv 1.
\end{equation}

If $\Omega$ is a slice domain, equality~\eqref{equazioneexp=1} gives $f\equiv 0$ on $\Omega\cap\R$. The identity principle entails $f_{0}\equiv 0$ on $\Omega$, that is $h_{0}\equiv \tilde{h}_{0}$ on $\Omega$ and hence the thesis. 

Assume now that $\Omega\cap\R=\emptyset$. Fix $I\in\SF$ and restrict equality~\eqref{equazioneexp=1} to $\Omega\cap\C_{I}$.
Since $f_{0}$ is slice preserving and $\Omega_{I}$ has two connected components, $\exp(f_{0})\equiv 1$ implies 
the existence of $n_{+},n_{-}\in\Z$ such that 
$$
f_{0}(x+Iy)=\begin{cases}
2\pi n_{+}I,\quad \mbox{for }y>0,\\
2\pi n_{-}I,\quad \mbox{for }y<0.
\end{cases}
$$
For $y>0$ and $J\in\SF$, the Representation Formula yields
\begin{align*}
f_{0}(x+Jy)&=\frac{1-JI}{2}f_{0}(x+Iy)+\frac{1+IJ}{2}f_{0}(x-Iy)\\
&=(1-JI)\pi n_{+}I+(1+I)\pi n_{-}I= \pi((n_{+}+n_{-})I+(n_{+}-n_{-})J).
\end{align*}
Since $f_{0}\in\So_{\R}(\Omega)$, this equality gives $n_{+}+n_{-}=0$ and thus $f_{0}(x+Jy)=2\pi n_{+}J$ for any $y>0$ and $J\in\SF$, that is $h_{0}=\tilde{h}_{0}+2\pi n_{+}\mathcal{J}$.
\end{proof}

\begin{corollary}\label{cor2}
Let $h,\tilde{h}\in\So(\Omega)$ be such that 
\begin{equation}\label{hvequivkv}
\exp_{*}(h)\equiv\exp_{*}(\tilde{h}).
\end{equation}
\noindent $\bullet$ If $\Omega$ is a slice domain, then $h_{0}\equiv \tilde{h}_{0}$ and $\exp_{*}(h_{v})\equiv \exp_{*}(\tilde{h}_{v})$. 

\noindent $\bullet$ If $\Omega$ is a product domain, then there exists $n\in\Z$ such that
$h_{0}=\tilde{h}_{0}+\pi n\mathcal{J}$. In this case 
$\exp_{*}(h_{v})\equiv \exp_{*}(\tilde{h}_{v})$ if $n$ is even and $\exp_{*}(h_{v})\equiv -\exp_{*}(\tilde{h}_{v})$ if $n$ is odd.
\end{corollary}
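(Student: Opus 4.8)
The plan is to reduce the general statement to the already-established Proposition~\ref{prophvequivkv0} by extracting the "real part" of the identity $\exp_*(h)\equiv\exp_*(\tilde h)$ via the symmetrized-function functor, and then cancelling it off to isolate the vector parts. First I would apply the identity $(\exp_*(f))^s=\exp(2f_0)$ (from the proposition collecting properties of $\exp_*$) to both sides of~\eqref{hvequivkv}: since $f\mapsto f^s$ is multiplicative and $\exp_*(h)=\exp_*(\tilde h)$, we get $\exp(2h_0)\equiv\exp(2\tilde h_0)$, i.e. $\exp_*(2h_0)\equiv\exp_*(2\tilde h_0)$ with $2h_0,2\tilde h_0\in\So_\R(\Omega)$. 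Now Proposition~\ref{prophvequivkv0} applies to $2h_0$ and $2\tilde h_0$: in the slice case this forces $2h_0\equiv 2\tilde h_0$, hence $h_0\equiv\tilde h_0$; in the product case it gives $2h_0=2\tilde h_0+2\pi n'\mathcal J$ for some $n'\in\Z$, hence $h_0=\tilde h_0+\pi n'\mathcal J$, which is the claimed relation with $n=n'$.

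Next I would divide out the real part. Since $h_0$ is slice preserving it commutes with everything under $*$, so by the addition formula $\exp_*(h)=\exp_*(h_0)*\exp_*(h_v)=\exp_*(h_0)\exp_*(h_v)$ (pointwise product, as $\exp_*(h_0)$ is slice preserving), and likewise for $\tilde h$. In the slice case, having just shown $h_0\equiv\tilde h_0$, the common factor $\exp_*(h_0)$ is a never-vanishing slice preserving function, so by the cancellation property in Proposition~\ref{propGPS} (third bullet) we may cancel it from $\exp_*(h_0)\exp_*(h_v)\equiv\exp_*(\tilde h_0)\exp_*(\tilde h_v)$ to conclude $\exp_*(h_v)\equiv\exp_*(\tilde h_v)$, as desired.

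For the product case one extra bookkeeping step is needed: write $h_0=\tilde h_0+\pi n\mathcal J$, so that, using that $\mathcal J$ is slice preserving with $\mathcal J^2\equiv -1$ and Example~\ref{exppiJ=-1} (which gives $\exp_*(\pi\mathcal J)\equiv -1$, and more generally $\exp_*(\pi n\mathcal J)\equiv(-1)^n$ by the addition formula since $\pi n\mathcal J$ commutes with itself), we get $\exp_*(h_0)=\exp_*(\tilde h_0)\exp_*(\pi n\mathcal J)=(-1)^n\exp_*(\tilde h_0)$. Plugging this into $\exp_*(h_0)\exp_*(h_v)\equiv\exp_*(\tilde h_0)\exp_*(\tilde h_v)$ and cancelling the never-vanishing slice preserving factor $\exp_*(\tilde h_0)$ (again via Proposition~\ref{propGPS}) yields $(-1)^n\exp_*(h_v)\equiv\exp_*(\tilde h_v)$, i.e. $\exp_*(h_v)\equiv\exp_*(\tilde h_v)$ when $n$ is even and $\exp_*(h_v)\equiv-\exp_*(\tilde h_v)$ when $n$ is odd. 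The only mild subtlety — the "main obstacle," though it is minor here — is making sure the cancellation is legitimate: one must invoke that the factor being removed is a nowhere-vanishing \emph{slice preserving} function (so that $*$-multiplication by it is just pointwise multiplication by a unit in $\So_\R(\Omega)$), which is exactly the hypothesis under which the third bullet of Proposition~\ref{propGPS} grants cancellation; everything else is a direct translation of Proposition~\ref{prophvequivkv0} through the $(\cdot)^s$ map.
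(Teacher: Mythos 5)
Your proposal is correct and follows essentially the same route as the paper: apply $(\exp_*(f))^s=\exp(2f_0)$ to reduce to Proposition~\ref{prophvequivkv0} for the real parts, then factor $\exp_*(h)=\exp(h_0)\exp_*(h_v)$ and cancel the never-vanishing slice preserving factor (using $\exp_*(\pi n\mathcal J)=(-1)^n$ in the product case). The paper compresses the second half into ``a straightforward computation,'' which you have simply written out.
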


\begin{proof}
Using~\cite[formula (4.4)]{A-dF}, equality~\eqref{hvequivkv} implies $\exp(2h_{0})=\exp(2\tilde{h}_{0})$. As $h_{0}, \tilde{h}_{0}$ are
slice preserving functions, Proposition~\ref{prophvequivkv0} gives that either $h_{0}\equiv \tilde{h}_{0}$ or $\Omega$ is a product domain and $2h_{0}= 2\tilde{h}_{0}+2\pi n\mathcal{J}$ for a suitable $n\in\Z$. A straightforward computation yields the thesis.
\end{proof}


We now study when two functions $h,\tilde{h}$ give the same $*$-exponential. The first case we analyze is when $\exp_{*}(h)$ is slice preserving. In order to simplify notations, we set
\begin{equation}\label{nuiszero}
\mathfrak{N}(\Omega):=\{f\in\So(\Omega)\,|\,\exists\, m\in\Z\setminus\{0\}, f_{v}^{s}=m^{2}\pi^{2}\}\cup\So_{\R}(\Omega).
\end{equation}

We strongly underline that if $f\in\mathfrak{N}(\Omega)$, then either $f_{v}\equiv 0$ or $f_{v}$ is never vanishing; in both cases
$f_{v}$ has no non-real isolated zeroes and $f_{v}^{s}$ always admits a square root (which, by the way, is constant).

\begin{theorem}
Let $h,\tilde{h}\in\So(\Omega)$ be such that $\exp_{*}(h)=\exp_{*}(\tilde{h})\in\So_{\R}(\Omega)$.

\noindent $\bullet$ If $\Omega$ is a slice domain, then $h_{0}\equiv \tilde{h}_{0}$, $h,\tilde{h}\in\mathfrak{N}(\Omega)$ and $\sqrt{h_{v}^{s}}\equiv \sqrt{\tilde{h}_{v}^{s}}$ \textnormal{(mod $2\pi$)}.

\noindent $\bullet$ If $\Omega$ is a product domain,  then there exists $n\in\Z$ such that
$h_{0}=\tilde{h}_{0}+\pi n\mathcal{J}$. Moreover
$h,\tilde{h}\in\mathfrak{N}(\Omega)$ and $\sqrt{h_{v}^{s}}\equiv \sqrt{\tilde{h}_{v}^{s}}+n\pi$ \textnormal{(mod $2\pi$)}.

\end{theorem}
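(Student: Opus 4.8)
The plan is to layer the extra hypothesis $\exp_{*}(h)=\exp_{*}(\tilde h)\in\So_{\R}(\Omega)$ on top of the already established Corollary~\ref{cor2}. Applying Corollary~\ref{cor2} to $\exp_{*}(h)\equiv\exp_{*}(\tilde h)$ gives at once the statement on the real parts: $h_{0}\equiv\tilde h_{0}$ when $\Omega$ is slice, and $h_{0}=\tilde h_{0}+\pi n\mathcal{J}$ for some $n\in\Z$ when $\Omega$ is product; in the product case it also records $\exp_{*}(h_{v})\equiv(-1)^{n}\exp_{*}(\tilde h_{v})$, while in the slice case $\exp_{*}(h_{v})\equiv\exp_{*}(\tilde h_{v})$. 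So the two genuinely new claims to establish are $h,\tilde h\in\mathfrak{N}(\Omega)$ and the congruences between $\sqrt{h_{v}^{s}}$ and $\sqrt{\tilde h_{v}^{s}}$.

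The core step --- and the one I expect to be the main obstacle --- is extracting $h\in\mathfrak{N}(\Omega)$ from the single scalar condition that $\exp_{*}(h)$ be slice preserving. Here I would start from~\eqref{munu} written as $\exp_{*}(h)=\exp_{*}(h_{0})\mu(h_{v}^{s})+\exp_{*}(h_{0})\nu(h_{v}^{s})h_{v}$ and observe that, since $h_{0}$ and $h_{v}^{s}$ lie in $\So_{\R}(\Omega)$ and $\mu,\nu$ are slice preserving entire functions, $\exp_{*}(h_{0}),\mu(h_{v}^{s}),\nu(h_{v}^{s})\in\So_{\R}(\Omega)$; hence the first summand is slice preserving and the second has zero real part, so the displayed formula is exactly the decomposition of $\exp_{*}(h)$ into real and vector parts. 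Thus $\exp_{*}(h)\in\So_{\R}(\Omega)$ is equivalent to $\exp_{*}(h_{0})\nu(h_{v}^{s})h_{v}\equiv0$. Since $\exp_{*}(h_{0})$ is never vanishing and slice preserving (hence pointwise invertible, or apply Proposition~\ref{propGPS}), it cancels, leaving $\nu(h_{v}^{s})h_{v}\equiv0$; as $\nu(h_{v}^{s})\in\So_{\R}(\Omega)$, Proposition~\ref{propGPS} then forces either $h_{v}\equiv0$, in which case $h\in\So_{\R}(\Omega)\subset\mathfrak{N}(\Omega)$, or $\nu(h_{v}^{s})\equiv0$. In the latter situation Remark~\ref{zero} shows that $h_{v}^{s}$ takes values in the discrete set $\{\pi^{2}m^{2}\,|\,m\in\Z\setminus\{0\}\}$, so, $\Omega$ being connected and $h_{v}^{s}$ continuous, $h_{v}^{s}\equiv\pi^{2}m^{2}$ for a single $m\in\Z\setminus\{0\}$, i.e.\ $h\in\mathfrak{N}(\Omega)$ by~\eqref{nuiszero}. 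Running the identical argument for $\tilde h$ --- legitimate because $\exp_{*}(\tilde h)=\exp_{*}(h)$ is slice preserving --- yields $\tilde h\in\mathfrak{N}(\Omega)$.

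Finally, the square-root congruences reduce to a parity bookkeeping. Write $h_{v}^{s}\equiv\pi^{2}m^{2}$ and $\tilde h_{v}^{s}\equiv\pi^{2}\tilde m^{2}$ with $m,\tilde m\in\Z$, allowing $m=0$ or $\tilde m=0$ to absorb the case of vanishing vector part, so that $\sqrt{h_{v}^{s}}=|m|\pi$ and $\sqrt{\tilde h_{v}^{s}}=|\tilde m|\pi$. Using that $\nu$ vanishes at these points and the values $\mu(\pi^{2}\ell^{2})=(-1)^{\ell}$ recalled after Definition~\ref{definizione-mu-nu}, formula~\eqref{munu} gives $\exp_{*}(h_{v})\equiv\mu(\pi^{2}m^{2})=(-1)^{m}$ and likewise $\exp_{*}(\tilde h_{v})\equiv(-1)^{\tilde m}$. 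Plugging these into the relation from Corollary~\ref{cor2} recorded in the first paragraph yields $(-1)^{m}=(-1)^{\tilde m}$ in the slice case and $(-1)^{m}=(-1)^{n}(-1)^{\tilde m}$ in the product case, hence $|m|\equiv|\tilde m|\pmod{2}$, respectively $|m|\equiv|\tilde m|+n\pmod{2}$; multiplying by $\pi$ this is precisely $\sqrt{h_{v}^{s}}\equiv\sqrt{\tilde h_{v}^{s}}\pmod{2\pi}$, respectively $\sqrt{h_{v}^{s}}\equiv\sqrt{\tilde h_{v}^{s}}+n\pi\pmod{2\pi}$. The ambiguity in the choice of the (constant) square root is harmless modulo $2\pi$ since $2|m|\pi\equiv0\pmod{2\pi}$, and the integer $n$ is the same as in the real-part relation because both issue from the one invocation of Corollary~\ref{cor2}.
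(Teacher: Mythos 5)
Your proof is correct and follows essentially the same route as the paper: apply Corollary~\ref{cor2} for the real parts, reduce sliceness of $\exp_*(h)$ to $\nu(h_v^s)h_v\equiv 0$ via formula~\eqref{munu}, use the zero-product property plus the discreteness of the zero set of $\nu$ to force $h_v^s\equiv m^2\pi^2$, and conclude with the parity comparison $\mu(m^2\pi^2)=(-1)^m$. The only differences are presentational (you unify the two signs as $(-1)^n$ and absorb the $h_v\equiv 0$ case into $m=0$, where the paper treats the subcases separately).
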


\begin{proof}

By Corollary~\ref{cor2} we have that either $h_{0}= \tilde{h}_{0}$ or 
$\Omega$ is a product domain and there exists $n\in\Z$ such that
$h_{0}=\tilde{h}_{0}+\pi n\mathcal{J}$.
In particular these equalities imply 
\begin{align}
\exp(h_{0})&\equiv\exp(\tilde{h}_{0}),\qquad\quad\mbox{for }n\mbox{ even}\label{i}\\
\exp(h_{0})&\equiv-\exp(\tilde{h}_{0}),\qquad\mbox{for }n\mbox{ odd}\label{ii}
\end{align}
where the second one can occur only if $\Omega$ is a product domain.
We start by considering the case of a slice domain. In this case we have $\exp_{*}(h_{v})=\exp_{*}(\tilde h_{v})\in\So_{\R}(\Omega)$. 
If $h_{v}=\tilde h_{v}\equiv 0$, we are done. Otherwise, we can suppose $h_{v}\not\equiv 0$, which implies
$\nu(h_{v}^{s})\equiv 0$ and therefore guarantees that there exists $m\in\Z\setminus\{0\}$ such that 
$h_{v}^{s}=m^{2}\pi^{2}$. As $\nu(h_{v}^{s})h_{v}=\nu(\tilde{h}_{v}^{s})\tilde h_{v}\equiv 0$, we obtain that
either $\tilde h_{v}\equiv 0$ or $\tilde h_{v}\not\equiv 0$ (and hence $\nu(\tilde h_{v}^{s})\equiv 0$).
In the first case, we have $\exp_{*}(h_{v})=\mu(h_{v}^{s})\equiv 1$ thus showing that $m$ is even
and that $\sqrt{h_{v}^{s}}\equiv \sqrt{\tilde h_{v}^{s}}$ \textnormal{(mod $2\pi$)}.
In the second one, we can find $n\in\Z\setminus\{0\}$, such that $\tilde h_{v}^{s}=n^{2}\pi^{2}$; as $\mu(h_{v}^{s})=\mu(\tilde h_{v}^{s})$, we find that $n$ and $m$ have the same parity, thus showing again $\sqrt{h_{v}^{s}}\equiv \sqrt{\tilde{h}_{v}^{s}}$ \textnormal{(mod $2\pi$)}.

The case of a product domain is obtained following the same lines of reasoning, by studying separately the two cases given by formulas~\eqref{i}
and~\eqref{ii}.
\end{proof}

We now turn to the case when $\exp_{*}(h)$ is not slice preserving;
under this hypothesis, if $\Omega$ is slice we find a dichotomy: either $h_{v}$ and $\tilde h_v$ have no non-real isolated zeroes and are such that $h_v^s$ and $\tilde h_v^s$ have a square root (independently from the fact that $\Omega$ is slice-contractible), in which case we find a ``discrete'' family of functions producing the same $*$-exponential, or $h=\tilde h$, that is,
there is an unexpected uniqueness result. An analogous, more refined statement can be
obtained also in the case of a product domain.



\begin{theorem}\label{unicity}
Let $h,\tilde{h}\in\So(\Omega)$ be such that $h\neq\tilde h$ and $\exp_{*}(h)=\exp_{*}(\tilde{h})\not\in\So_{\R}(\Omega)$.

\noindent $\bullet$ If $\Omega$ is a slice domain, then 
$h_{0}\equiv \tilde{h}_{0}$, 
both $h_{v}$ and $\tilde h_{v}$ 
have no non-real isolated zeroes, both $h_{v}^{s}$ and $\tilde{h}_{v}^{s}$ have a square root on $\Omega$ and there exist $m\in\Z\setminus\{0\}$, $\alpha\in\So_\R(\Omega)$ and $H_v\in\So(\Omega)$ with $H_v^s\equiv1$  such that $h_v=\alpha H_v$ and 
$\tilde{h}_{v}=(\alpha +2\pi m)H_v=h_{v}+2\pi m H_v$, so that
$$
\tilde h=h+2\pi m H_v.
$$
\noindent $\bullet$ If $\Omega$ is a product domain,  then one of the following holds
\begin{enumerate}
\item there exists $n\in\Z\setminus\{0\}$ such that
$$\tilde h=h+2\pi n\mathcal{J};$$
\item both $h_v$ and $\tilde h_v$ are not zero divisors and have no non-real isolated zeroes, both $h_{v}^{s}$ and $\tilde{h}_{v}^{s}$ have a square root on $\Omega$ and there exist
$n,m\in\mathbb Z$ such that $n\equiv m$ \textnormal{(mod $2$)} and $m\neq0$, $\alpha\in\So_\R(\Omega)$ and $H_v\in\So(\Omega)$ with $H_v^s\equiv1$, such that 
$h_v=\alpha H_v$ and 
$$\tilde{h}=h_{0}+\pi n\mathcal{J}+(\alpha +\pi m)H_v=h+\pi (n\mathcal{J}+ m H_v).$$
\end{enumerate}
\end{theorem}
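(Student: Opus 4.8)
The plan is to reduce everything to the formula $\exp_{*}(h)=\exp_{*}(h_{0})(\mu(h_{v}^{s})+\nu(h_{v}^{s})h_{v})$ from \eqref{munu} and play the real part $h_0$ off against the vector part $h_v$. First I would invoke Corollary~\ref{cor2}: from $\exp_{*}(h)=\exp_{*}(\tilde h)$ we get $h_{0}\equiv\tilde h_{0}$ in the slice case, and $h_{0}=\tilde h_{0}+\pi n\mathcal J$ (with $\exp_{*}(h_v)\equiv(-1)^n\exp_{*}(\tilde h_v)$) in the product case. Concentrating on the slice case, multiplying the two expressions from \eqref{munu} and cancelling the never-vanishing slice-preserving factor $\exp_{*}(h_0)$ yields
\begin{equation}\label{propplaneq}
\mu(h_{v}^{s})+\nu(h_{v}^{s})h_{v}=\mu(\tilde h_{v}^{s})+\nu(\tilde h_{v}^{s})\tilde h_{v}.
\end{equation}
Next I would split into the ``spherical'' part and the ``genuinely vectorial'' part. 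Taking real parts of \eqref{propplaneq} gives $\mu(h_{v}^{s})\equiv\mu(\tilde h_{v}^{s})$ (since $\mu(\cdot)$ is slice preserving, hence real on $\Omega\cap\R$, and $h_v, \tilde h_v$ have zero real part), and the vector parts give $\nu(h_{v}^{s})h_{v}\equiv\nu(\tilde h_{v}^{s})\tilde h_{v}$.

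The technical heart is to show $h_v$ has no non-real isolated zero when $\exp_{*}(h)\not\in\So_{\R}(\Omega)$. If $q_0$ were such a zero then on $\SF_{q_0}$ the symmetrized function $h_v^s$ vanishes, so at $q_0$ both $\mu(h_v^s)$ and $\nu(h_v^s)h_v$ would be forced (via $\mu(0)=1$, $\nu(0)=1$, $h_v(q_0)=0$) to make $\exp_{*}(h)(q_0)=\exp_{*}(h_0)(q_0)$ real — and, chasing the sphere $\SF_{q_0}$ with the representation formula, this propagates to force $\exp_{*}(h)\in\So_{\R}$, contradiction. So $h_v$ (and likewise $\tilde h_v$) has only real and spherical zeroes; by Proposition~\ref{propGPS} this is exactly the statement that $h_v^s\not\equiv 0$ on any subdomain unless $h_v$ is a zero divisor, which in the slice case is impossible. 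Then Proposition~\ref{quotient}, applied once I produce a square root of $h_v^s$, lets me factor $h_v=\alpha H_v$ with $\alpha\in\So_\R(\Omega)$ a square root of $h_v^s$ and $H_v^s\equiv 1$. The existence of that square root is the delicate point: it does not follow from slice-contractibility here (we are not assuming it in this theorem), so I must extract it from equation \eqref{propplaneq} itself. The idea is: where $\nu(h_v^s)\not\equiv 0$, write $h_v=\tfrac{1}{\nu(h_v^s)}(\exp_{*}(h)\exp_{*}(h_0)^{-*}-\mu(h_v^s))$, deduce $\tilde h_v$ is a (slice-preserving) scalar multiple of $h_v$, say $\tilde h_v=\lambda h_v$ with $\lambda\in\So_\R$; comparing $h_v^s$ and $\tilde h_v^s=\lambda^2 h_v^s$ and feeding back into $\mu(h_v^s)\equiv\mu(\tilde h_v^s)$, $\nu(h_v^s)h_v\equiv\nu(\tilde h_v^s)\tilde h_v$, one is driven (using the known zero/value structure of $\mu,\nu$ recorded in the remark after Definition~\ref{definizione-mu-nu}, in particular that $\mu(q)=\mu(q')$, $\sqrt q\,\nu(q)=\sqrt{q'}\,\nu(q')$ with both roots in a suitable strip forces $\sqrt q\equiv\sqrt{q'}\pmod{2\pi}$) to conclude that $h_v^s$ and $\tilde h_v^s$ are congruent squares of slice-preserving functions, producing the square root $\alpha$ and the common direction $H_v$, with $\sqrt{\tilde h_v^s}=\alpha+2\pi m$ for some $m\in\Z\setminus\{0\}$ (nonzero since $h\neq\tilde h$). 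This gives $\tilde h_v=\alpha H_v+2\pi m H_v=h_v+2\pi m H_v$ and hence $\tilde h=h+2\pi m H_v$.

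For the product-domain case I would run the same argument on $\Omega_I^+$, but now carry along the extra term $\pi n\mathcal J$ coming from $h_0-\tilde h_0$ and the sign $(-1)^n$ in $\exp_{*}(h_v)\equiv(-1)^n\exp_{*}(\tilde h_v)$. Alternative (1) is the degenerate branch where $h_v$ is a zero divisor (possible only on a product domain): then $\exp_{*}(h_v)=1+h_v$ by the example after Example~\ref{exppiJ=-1}, and $\exp_{*}(h)=\exp_{*}(\tilde h)$ collapses to $\tilde h=h+2\pi n\mathcal J$ exactly as in Proposition~\ref{prophvequivkv0}. Alternative (2) is the non-zero-divisor branch, handled as above, where the interplay $h_0=\tilde h_0+\pi n\mathcal J$ together with $\sqrt{\tilde h_v^s}\equiv\sqrt{h_v^s}+\pi m\pmod{2\pi}$ and the sign bookkeeping forces $n\equiv m\pmod 2$ and $m\neq 0$, yielding $\tilde h=h+\pi(n\mathcal J+mH_v)$. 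The main obstacle, as indicated, is producing the square root of $h_v^s$ without a topological hypothesis — this must be squeezed out of the functional equation \eqref{propplaneq} rather than imported — together with the careful propagation argument showing that a non-real isolated zero of $h_v$ would force $\exp_{*}(h)$ to be slice preserving.
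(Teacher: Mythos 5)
There is a genuine gap, and it sits exactly at what you call the ``technical heart''. You claim that a non-real isolated zero $q_{0}$ of $h_{v}$ would, via the Representation Formula, propagate to force $\exp_{*}(h)\in\So_{\R}(\Omega)$, contradicting the hypothesis. This implication is false: on $\SF_{q_{0}}$ one only gets $\exp_{*}(h)=\exp(h_{0})(1+h_{v})$, which is real at the single point $q_{0}$ and nowhere else in general, and nothing propagates off that sphere. Indeed the paper's Example~\ref{non-existence} exhibits functions such as $h=(q-i)*j$ with $h_{0}\equiv 0$, $h_{v}=h$ having a non-real isolated zero, and $\exp_{*}(h)\notin\So_{\R}(\HH)$; the subsequent Corollary (the uniqueness statement) exists precisely because such $h$ occur. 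The absence of non-real isolated zeroes is a consequence of the existence of a \emph{second} function $\tilde h\neq h$ with the same $*$-exponential, not of $\exp_{*}(h)\notin\So_{\R}(\Omega)$ alone. The correct mechanism is: from $\nu(h_{v}^{s})h_{v}\equiv\nu(\tilde h_{v}^{s})\tilde h_{v}\not\equiv 0$ one first derives, on a subdomain where these are non-vanishing, $\tilde h_{v}=\beta h_{v}$ with $\exp_{*}((1-\beta)h_{v})\equiv 1$, hence $(1-\beta)^{2}h_{v}^{s}=4m^{2}\pi^{2}$ with $m\neq 0$ and the relation $\alpha\,\tilde h_{v}=(2\pi m+\alpha)h_{v}$ where $\alpha^{2}=h_{v}^{s}$; only \emph{after} globalizing this relation does one see that a non-real isolated zero of $h_{v}$ is impossible (the left side would vanish on the whole sphere $\SF_{q_{0}}$ while the right side equals $2\pi m\,h_{v}$ there, which has an isolated zero). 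Your argument has the logical order reversed and replaces the real obstruction with a false one.

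The second, related gap is the global square root of $h_{v}^{s}$, which you correctly flag as delicate but do not actually produce: your pointwise comparison of $\mu$ and $\nu$ values does not yield a slice-regular square root across the spherical zeroes of $\nu(h_{v}^{s})$, where the division defining $\lambda$ breaks down. The paper's device is to eliminate $\beta$ and obtain the identity $h_{v}^{s}=\frac{1}{16\pi^{2}m^{2}}\bigl(\tilde h_{v}^{s}-h_{v}^{s}-4\pi^{2}m^{2}\bigr)^{2}$ locally; since both sides are regular on all of $\Omega$, the Identity Principle extends it globally and the square root is the explicit rational expression $\alpha=\frac{\tilde h_{v}^{s}-h_{v}^{s}-4\pi^{2}m^{2}}{4\pi m}$. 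Without this (or an equivalent globalization), your factorization $h_{v}=\alpha H_{v}$ via Proposition~\ref{quotient} is not available. Finally, a minor point: in the product case, alternative (1) is the branch $h_{v}=\tilde h_{v}$ (zero divisor or not), while the case ``$h_{v}$ a zero divisor and $h_{v}\neq\tilde h_{v}$'' leads to a contradiction rather than to a separate alternative.
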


\begin{proof}
If $h_v=\tilde h_v$, then $\exp(h_0)=\exp(\tilde h_0)$, so the hypothesis $h\neq \tilde h$ and  Proposition~\ref{prophvequivkv0} give that $\Omega$ is a product domain and  there exists $n\in\Z\setminus\{0\}$ such that
$h_{0}=\tilde{h}_{0}+2\pi n\mathcal{J}$.
Thus, from now on, we assume that $h_v\neq \tilde h_v$.

If $\Omega$ is a slice domain, Corollary~\ref{cor2} gives $h_{0}=\tilde h_{0}$ and
$\exp_{*}(h_{v})=\exp_{*}(\tilde{h}_{v})\not\in\So_{\R}(\Omega)$. 
We have
$\nu(h_{v}^{s})h_{v} \equiv \nu(\tilde{h}_{v}^{s})\tilde{h}_{v}\not\equiv 0$. This implies that $h_{v}$ and $\tilde{h}_{v}$
are linearly dependent on $\So_{\R}(\Omega)$ and thus commute (see~\cite{A-dF}, Proposition 2.10). 
Now choose $p\in\Omega\setminus\R$ such that $\nu(h_{v}^{s})h_{v}$ is never vanishing on $\SF_{p}$ and denote by $\widetilde{\Omega}$ a slice-contractible product domain contained in $\Omega$ such that $\nu(h_{v}^{s})h_{v}$ is never vanishing on $\widetilde{\Omega}$. 

In particular this means that $h_v^s$ is never-vanishing on $\widetilde \Omega$ and therefore there exists a square root $\alpha$ of $h_v^s$ which is never vanishing on $\widetilde \Omega$.
Moreover, the equality $\nu(h_{v}^{s})h_{v} \equiv \nu(\tilde{h}_{v}^{s})\tilde{h}_{v}$ gives that
$\nu(\tilde{h}_{v}^{s})$ is never vanishing on $\widetilde{\Omega}$ and thus
there exists $\beta\in\So_{\R}(\widetilde{\Omega})$ such that $\tilde h_{v}=\beta h_{v}$. As $h\not\equiv \tilde h$
on $\Omega$, the Identity principle gives that $\beta$ is not identically equal to $1$ on $\widetilde{\Omega}$.
Since $h_{v}$ and $\tilde{h}_{v}$ commute, we can apply~\cite[Proposition 4.3]{A-dF} and we
are left to study $\exp_{*}((1-\beta)h_{v})\equiv 1$. As $h_{v}\not\equiv 0$, this implies  that there exists $m\in\Z\setminus\{0\}$ 
such that $((1-\beta)h_{v})^{s}=4m^{2}\pi^{2}$, that is $(1-\beta)^{2}h_{v}^{s}=4m^{2}\pi^{2}$ and
therefore gives $\beta=\frac{2\pi m}{\alpha}+1$, up to a possible change of sign of $m$.
We then obtain 
\begin{equation}\label{eqsqrt0}
\tilde h_{v}=\left(\frac{2\pi m}{\alpha}+1\right)h_{v},
\end{equation}
on $\widetilde{\Omega}$, which can also be written as
\begin{equation}\label{eqsqrt}
\alpha\cdot\tilde h_{v}=\left(2\pi m+\alpha\right)h_{v}.
\end{equation}
By computing the symmetrized function of both members of equation~\eqref{eqsqrt}, we obtain the following equality
$h_{v}^{s}\tilde h_{v}^{s}=\left(2\pi m+\alpha\right)^{2}h_{v}^{s}$.
As $\tilde h_{v}^{s}$ is never vanishing on $\widetilde{\Omega}$, we also have 
$\tilde h_{v}^{s}=\left(2\pi m+\alpha\right)^{2}$,
from which we infer 
$$
\alpha=\frac{\tilde h_{v}^{s}-h_{v}^{s}-4\pi^{2} m^{2}}{4\pi m},
$$
on $\widetilde{\Omega}$, thanks to $\alpha^2=h_v^s$. By squaring both members, we thus get 
$h_{v}^{s}=\frac{1}{16\pi^{2}m^{2}}\left(\tilde h_{v}^{s}-h_{v}^{s}-4\pi^{2} m^{2}\right)^{2}$.
Last equality was obtained on $\widetilde{\Omega}$, but since both members are defined and regular on $\Omega$, by the Identity Principle
we have that it holds on the whole $\Omega$, thus showing that $h_{v}^{s}$ has a square root in $\Omega$. Up to a change of sign, we can suppose that it agrees with the previous one on $\widetilde{\Omega}$, so we still denote it by $\alpha\in\So_\R(\Omega)$.
A further application of the Identity Principle shows that Equality~\eqref{eqsqrt} continues on $\Omega$.

Now suppose that $q_{0}$ is a non real isolated zero of $h_{v}$; thus $h_{v}^{s}$ and therefore $\alpha$ are identically
zero on $\SF_{q_{0}}$. The left hand side of equality~\eqref{eqsqrt} is then identically zero on $\SF_{q_{0}}$, while
the right hand side is equal to $2\pi m\cdot h_{v}$; as $m\neq 0$, last function has an isolated zero in $q_0$. This contradiction shows that $h_{v}$ cannot have non real isolated zeroes. 

Thanks to Proposition~\ref{quotient}, the function $H_v:=\frac{h_{v}}{\alpha}$ is a well defined  slice-regular function 
on $\Omega$ with $H_v^s\equiv 1$; as $h_v=\alpha H_v$,  by the zero-product property,
 Equality~\ref{eqsqrt} can also be written in the form $\tilde h_{v}=
 \left(2\pi m+\alpha\right)H_v$, which holds on the whole of~$\Omega$ by a further application of the Identity Principle.

Now, since $\tilde h_v$ differ for the slice-preserving factor  $\left(2\pi m+\alpha\right)$ only from $H_v$, which is never-vanishing, we have that the only zeroes of $\tilde h_v$ are the zeroes of  $\left(2\pi m+\alpha\right)$ and therefore $\tilde h_v$ has no non-real isolated zeroes as well. Lastly, $\tilde h_v^s= \left(2\pi m+\alpha\right)^2$, which ensures that 
$\tilde h_v^s$ has a square root on $\Omega$.

Now we turn our attention to the case in which $\Omega$ is a product domain. Corollary~\ref{cor2} gives that
there exists $n\in\Z$ such that
$h_{0}=\tilde{h}_{0}+\pi n\mathcal{J}$ and moreover, either
$\exp_{*}(h_{v})\equiv \exp_{*}(\tilde{h}_{v})$ if $n$ is even or $\exp_{*}(h_{v})\equiv -\exp_{*}(\tilde{h}_{v})$ if $n$ is odd.
We first deal with the case when $n$ is even; in particular this gives $\nu(h_{v}^{s})h_{v} \equiv \nu(\tilde{h}_{v}^{s})\tilde{h}_{v}\not\equiv 0$.

We first look at the case in which $h_{v}$ is a zero divisor.
As $h_{v}^{s}\equiv 0$, we have $\nu(h_{v}^{s})\equiv 1$,
thus the above equality becomes $h_{v}=\nu(\tilde h_{v}^{s})\tilde h_{v}\not\equiv 0$, which in particular implies that
$\nu(\tilde h_{v}^{s})\not\equiv 0$. By taking the symmetrized function of both members of the above equality,
we obtain $0\equiv h_{v}^{s}=\nu(\tilde h_{v}^{s})^{2}\tilde h_{v}^{s}$, showing that $\tilde h_{v}^{s}\equiv 0$, too.
Hence $\nu(\tilde h_{v}^{s})\equiv 1$ and therefore $h_{v}\equiv \tilde h_{v}$ which is a contradiction to the assumption $h_v\neq \tilde h_v$.

We now turn to the case in which $h_{v}$ is not a zero divisor. As $n$ is even,  $\nu(h_{v}^{s})h_{v} \equiv \nu(\tilde{h}_{v}^{s})\tilde{h}_{v}$ is not a zero divisor and $h_v\neq \tilde h_v$, we can argue as in the case of a slice domain obtaining that both 
$h_{v}$ and $\tilde h_{v}$ have no non-real isolated zeroes and there exist an even $m\in\Z\setminus\{0\}$, $\alpha\in\So_\R(\Omega)$ and $H_v\in\So(\Omega)$ with $H_v^s\equiv1$  such that 
$h_v=\alpha H_v$ and $\tilde{h}_{v}=(\alpha +\pi m)H_v=h_{v}+\pi m H_v$.

Finally, we consider the case in which $n$ is odd. The only difference with the above reasoning is due to the fact that 
$\exp_{*}(h_{v})\equiv -\exp_{*}(\tilde{h}_{v})$. In the case when $h_{v}$ is a zero divisor, again we get that $\tilde h_v$ is a zero-divisor. Thus $\nu(\tilde h_v^s)\equiv1$ and we obtain $1+h_{v}=-(1+\tilde h_{v})$ which is equivalent to $h_{v}+\tilde h_{v}=-2$. This is a contradiction, because $-2$ is slice preserving and different from $0$, while $h_v-\tilde h_v$ coincides with its vector part.
Finally, in the case when $h_{v}$ is not a zero divisor, the above reasoning gives that also $\tilde h_{v}$ is not a zero divisor, moreover, 
both $h_{v}$ and $\tilde h_{v}$ have no non-real isolated zeroes and there exists $m\in\Z$ odd, $\alpha\in\So_\R(\Omega)$, $H_v\in\So(\Omega)$ with $H_v^s\equiv 1$ such that $h_v=\alpha H_v$ and 
$\tilde{h}_{v}=(\alpha +\pi m)H_v=h_{v}+\pi mH_v$.
\end{proof}

\begin{remark}\label{remarkHV}
Notice that the function $H_{v}\in\So(\Omega)$ such that $H_{v}^{s}\equiv 1$
which appears in the previous statement is unique up to a change of sign and
can be interpreted as the quotient of $h_{v}$ by a square root of
$h_{v}^{s}$. Indeed, if $\alpha H_{v}=\beta L_{v}=h_{v}$ for
$\alpha,\beta\in\So_{\R}(\Omega)$ and $H_{v}^{s}=L_{v}^{s}\equiv 1$ we have
$\alpha^{2}=\beta^{2}$, thus either $\alpha=\beta$ (which gives $L_{v}=H_{v}$)
or $\alpha=-\beta$ (which gives $L_{v}=-H_{v}$). Moreover, $h_{v}^{s}=(\alpha H_{v})^{s}=\alpha^{2}H_{v}^{s}=\alpha^{2}$, so that $\alpha$ is a square root
of $h_{v}^{s}$.
\end{remark}

To stress the relevance, and also the unexpectedness, of the above theorem, we give a couple of partial restatement which underline the uniqueness 
result when $\Omega$ is slice and the ``vector'' part of the function has an isolated zero.

\begin{corollary}
Let $\Omega$ be slice
and $h,\tilde h\in\So(\Omega)$ be such that $\exp_{*}(h)\not\in\So_{\R}(\Omega)$.
If $h_{v}$ has a non real isolated zero, then $\exp_{*}(h)=\exp_{*}(\tilde h)$ if and only if $h\equiv\tilde h$.
\end{corollary}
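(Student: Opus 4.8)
The plan is to read this off directly from Theorem~\ref{unicity}. The forward implication $h\equiv\tilde h\Rightarrow\exp_{*}(h)=\exp_{*}(\tilde h)$ is trivial, so the whole content lies in the converse, which I would establish by contradiction.

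First I would suppose, for contradiction, that $\exp_{*}(h)=\exp_{*}(\tilde h)$ with $h\neq\tilde h$. Since $\exp_{*}(\tilde h)=\exp_{*}(h)\notin\So_{\R}(\Omega)$ by hypothesis, the pair $(h,\tilde h)$ meets precisely the standing assumptions of Theorem~\ref{unicity}. Because $\Omega$ is a slice domain, I would invoke the first bullet of that theorem; among its conclusions is the assertion that $h_{v}$ has no non-real isolated zeroes. This flatly contradicts the hypothesis of the corollary that $h_{v}$ does have a non-real isolated zero. Hence no $\tilde h\neq h$ with $\exp_{*}(h)=\exp_{*}(\tilde h)$ can exist, which is exactly the claim.

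There is no genuine obstacle to overcome here, as all the analytic work is already contained in Theorem~\ref{unicity}; the only point deserving a line of care is checking that the hypotheses line up---namely that $\exp_{*}(\tilde h)\notin\So_{\R}(\Omega)$ (immediate from $\exp_{*}(h)=\exp_{*}(\tilde h)$) and that we are genuinely in the slice-domain alternative, where the dichotomy ``either $h=\tilde h$, or $h_{v}$ has no non-real isolated zeroes'' is available. It is also worth recording why sliceness is essential: on a product domain alternative $(1)$ of Theorem~\ref{unicity}, $\tilde h=h+2\pi n\mathcal{J}$, leaves $h_{v}$ unchanged and still yields $\exp_{*}(\tilde h)=\exp_{*}(h)$ (because $\mathcal{J}$ is slice preserving and $\exp_{*}(2\pi n\mathcal{J})\equiv 1$), so the uniqueness statement genuinely fails there even when $h_{v}$ has isolated zeroes.
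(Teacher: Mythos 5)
Your proof is correct and matches the paper's intended derivation: the corollary is stated as an immediate consequence of Theorem~\ref{unicity}, obtained by contraposition of its slice-domain alternative exactly as you describe. The extra remark on why sliceness is needed is accurate but not required.
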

\begin{corollary}
Let $\Omega$ be slice
and $h,\tilde h\in\So(\Omega)$ be such that $\exp_{*}(h)=\exp_{*}(\tilde h)\not\in\So_{\R}(\Omega)$.
If there exists $q\in\Omega$ such that $h(q)=\tilde h(q)$, then $h\equiv \tilde h$.
\end{corollary}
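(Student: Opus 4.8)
The plan is to argue by contradiction, reducing everything to the structural description of pairs of functions with equal $*$-exponential provided by Theorem~\ref{unicity}. Assume $h\not\equiv\tilde h$. Since $\Omega$ is slice and the common value $\exp_{*}(h)=\exp_{*}(\tilde h)$ lies outside $\So_{\R}(\Omega)$, the slice branch of Theorem~\ref{unicity} applies verbatim and produces an integer $m\in\Z\setminus\{0\}$, a slice preserving $\alpha\in\So_{\R}(\Omega)$, and a function $H_v\in\So(\Omega)$ with $H_v^s\equiv1$, for which $\tilde h=h+2\pi m H_v$. In this way the entire difference $\tilde h-h$ is concentrated in the single term $2\pi m H_v$, and the problem is reduced to showing that $H_v$ cannot vanish at the coincidence point $q$.

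The decisive step is then a one-line evaluation. At the point $q$ where $h(q)=\tilde h(q)$, the relation $\tilde h=h+2\pi m H_v$ forces $2\pi m H_v(q)=0$; since $m\neq0$ this yields $H_v(q)=0$. I would contradict $H_v^s\equiv1$ by invoking the defining property of the $*$-product: because $H_v^s=H_v*H_v^c$ and, by the case distinction in the definition of $f*g$, the $*$-product vanishes at every point where its left factor does, we obtain $H_v^s(q)=(H_v*H_v^c)(q)=0$, whence $1=0$, a contradiction. Equivalently, one may simply observe that $H_v^s\equiv1$ forces $H_v$ to be never vanishing. Hence the assumption $h\not\equiv\tilde h$ is untenable, and $h\equiv\tilde h$.

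Since Theorem~\ref{unicity} may be assumed, there is no serious obstacle here: all the analytic substance — that $h_0\equiv\tilde h_0$, that $h_v$ and $\tilde h_v$ have no non-real isolated zeroes, and the precise identity $\tilde h=h+2\pi m H_v$ with $H_v^s\equiv1$ — is already packaged in that theorem. The only points requiring care are checking that the corollary's hypotheses ($\Omega$ slice and $\exp_{*}(h)=\exp_{*}(\tilde h)\notin\So_{\R}(\Omega)$) are exactly those under which the slice case of Theorem~\ref{unicity} is stated, and that $m\neq0$, so that the coincidence $H_v(q)=0$ genuinely propagates to $H_v^s(q)=0$. The conceptual content is that $H_v$, being a unit for the symmetrization, can never vanish, so a single coincidence of $h$ and $\tilde h$ already excludes the nontrivial branch of the uniqueness alternative and collapses it to $h\equiv\tilde h$.
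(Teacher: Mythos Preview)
Your proof is correct and follows essentially the same approach as the paper: assume $h\not\equiv\tilde h$, invoke the slice case of Theorem~\ref{unicity} to write $\tilde h=h+2\pi m H_v$ with $m\neq0$ and $H_v^s\equiv1$, and then observe that the coincidence $h(q)=\tilde h(q)$ forces $H_v(q)=0$, contradicting the fact that $H_v^s\equiv1$ makes $H_v$ never vanishing. The paper phrases the final contradiction as ``$m=0$'' rather than ``$H_v(q)=0$'', but this is the same argument.
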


\begin{proof}
As $\Omega$ is slice, if $h\not\equiv\tilde h$, then there exist $m\in\mathbb{Z}\setminus \{0\}$ and $H_{v}$ with $H_{v}^{s}\equiv 1$ such that $\tilde h=h+2\pi m H_{v}$. Thus $\tilde h(q)=h(q)+2\pi m H_{v}(q)$ gives $mH_{v}(q)=0$. Since $H_{v}^{s}\equiv 1$, this entails $m=0$, that
is a contradiction.
\end{proof}

\section{Existence results for the $*$-logarithm}\label{the-core}

As a first consequence of the results obtained in Section~\ref{first_existence_results}, Proposition~\ref{one-slice-case} allows us to restrict our attention to a particular class of never vanishing functions.

\begin{remark}\label{r1} 
Let $\Omega$ be a slice-contractible domain.
For any $g\in\So^{*}(\Omega)$, we have that $g^{s}$ belongs to $\So^{*}(\Omega)\cap\So_{\R}(\Omega)$ and it is positive on the reals if $\Omega\cap \R\neq \emptyset$. 
Then Proposition~\ref{one-slice-case} entails the existence of a 
$\psi_{g}\in\So_{\R}(\Omega)$ such that $\exp(\psi_{g})=g^{s}$
(and $\psi_{g}$ is unique is $\Omega$ is slice). 
A trivial computation shows that $\exp(-\psi_{g}/2)g$ belongs to $\So^{1}(\Omega)$. Moreover, since $\exp(-\psi_{g}/2)$ is slice preserving, by~\cite[Corollary 4.4]{A-dF}, we
have that $g$ has a $*$-logarithm if and only if $\exp(-\psi_{g}/2)g$ has.
Thanks to these considerations, without loss of generality, we can 
reduce ourselves to study equation~\eqref{exp} in the case when $g^{s}\equiv 1$. 
\end{remark}

\begin{assumption}\label{notslicepreserving}
Since Corollary~\ref{cor1} gives the existence of a $*$-logarithm for all never vanishing slice preserving regular functions, from now on
we consider equation~\eqref{exp} only in the case when $g\not\in\So_{\R}(\Omega)$, that is $g_{v}\not\equiv 0$. 
\end{assumption}

\begin{proposition}\label{propos1}
Let $g\in\So^{1}(\Omega)$ and suppose that $f$ is a $*$-logarithm of $g$. If $\Omega$ is slice, then $f_{0}\equiv 0$;
if $\Omega$ is product, then there exists $n\in\mathbb{Z}$ such that
$f_{0}\equiv n\pi\mathcal{J}$. In particular,
\begin{itemize}
\item  if $\Omega$ is slice, then
any $*$-logarithm of $g$ has ``real part'' identically zero;
\item if $\Omega$ is product, then, up to substituting $g$ with $-g$, we can find a $*$-logarithm of $g$
whose real part is identically zero.
\end{itemize}
\end{proposition}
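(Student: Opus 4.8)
The plan is to reduce the statement to the already-established Proposition~\ref{prophvequivkv0} on slice preserving functions with equal $*$-exponentials. First I would observe that, since $f$ is a $*$-logarithm of $g$ and $g\in\So^{1}(\Omega)$, the formula $(\exp_{*}(f))^{s}=\exp(2f_{0})$ forces $\exp(2f_{0})=g^{s}\equiv 1$. As $2f_{0}\in\So_{\R}(\Omega)$, this reads $\exp_{*}(2f_{0})=\exp(2f_{0})\equiv 1=\exp_{*}(0)$, so Proposition~\ref{prophvequivkv0} applied with $h_{0}=2f_{0}$ and $\tilde h_{0}=0$ gives $2f_{0}\equiv 0$ when $\Omega$ is slice, and $2f_{0}=2\pi n\mathcal{J}$ for some $n\in\Z$ when $\Omega$ is a product domain. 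Dividing by $2$ then yields $f_{0}\equiv 0$ in the slice case and $f_{0}\equiv n\pi\mathcal{J}$ in the product case, which is the first part of the statement.

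For the ``in particular'' clause, the slice case is immediate from what has just been shown. In the product case I would write $f=n\pi\mathcal{J}+f_{v}$: since $n\pi\mathcal{J}$ is slice preserving it $*$-commutes with $f_{v}$, hence $g=\exp_{*}(f)=\exp_{*}(n\pi\mathcal{J})*\exp_{*}(f_{v})$. Using Example~\ref{exppiJ=-1} together with $\exp_{*}(n\pi\mathcal{J})=(\exp_{*}(\pi\mathcal{J}))^{*n}$ (valid because a slice preserving function $*$-commutes with itself), or equivalently evaluating $\exp(n\pi\mathcal{J})$ pointwise as $\cos(n\pi)+\sin(n\pi)\mathcal{J}$, one gets $\exp_{*}(n\pi\mathcal{J})\equiv(-1)^{n}$. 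Therefore $g=(-1)^{n}\exp_{*}(f_{v})$, and since $f_{v}$ has vanishing ``real part'', $f_{v}$ is a $*$-logarithm of $g$ when $n$ is even and of $-g$ when $n$ is odd; noting $-g\in\So^{1}(\Omega)$ as well, this proves the product-case assertion.

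The proof is short, and I do not expect any genuine obstacle: the whole content is the reduction to Proposition~\ref{prophvequivkv0}. The only point demanding a little attention is the product case, where the slice preserving factor $n\pi\mathcal{J}$ must be peeled off $\exp_{*}(f)$ and the sign $(-1)^{n}$ tracked carefully — this sign is exactly what produces the ``up to substituting $g$ with $-g$'' caveat and explains why, in contrast with the slice case, one cannot in general achieve a $*$-logarithm with vanishing real part for $g$ itself.
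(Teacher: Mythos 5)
Your proof is correct and follows essentially the same route as the paper: both deduce $\exp(2f_{0})\equiv g^{s}\equiv 1$ from $(\exp_{*}(f))^{s}=\exp(2f_{0})$ and then invoke the uniqueness result for slice preserving functions (the paper cites Corollary~\ref{cor2}, you apply Proposition~\ref{prophvequivkv0} directly to $2f_{0}$ and $0$, which is the same mechanism and in fact yields the constant $n\pi\mathcal{J}$ more cleanly). Your explicit verification of the ``in particular'' clause via $\exp_{*}(n\pi\mathcal{J})\equiv(-1)^{n}$ is exactly the computation the paper leaves implicit in the paragraph following the proposition.
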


\begin{proof}
Let us assume that $f$ is a solution of Equation~\eqref{exp}. Thanks to~\cite[Proposition 4.13]{A-dF}, as $g^{s}\equiv 1$, we have that $\exp(2f_{0})\equiv 1\equiv \exp(0)$. Thus, corollary~\ref{cor2} ensures that $f_{0}\equiv 0$
if $\Omega$ is a slice domain, while there exists $n\in\mathbb{Z}$
such that $f_{0}\equiv n\pi\mathcal{J}$ if $\Omega$ is a product domain.
\end{proof}

The above proposition tells us that we can limit ourselves to look for solutions of $\exp_{*}(f_{v})=g$
if $\Omega$ is slice or $\Omega$ is product and $n$ is even and $\exp_{*}(f_{v})=- g$ if $\Omega$ is product and $n$ is odd.

%
%

The following result sets the existence of a $*$-logarithm for a never-vanishing function whose ``vector part" is a zero-divisor (obviuosly, this case can occur only if $\Omega$ is a product domain).
\begin{proposition}\label{zero-divisor}
Let $\Omega$ be a slice contractible domain and $g\in\So^*(\Omega)$ be such that $g_v$ is a zero-divisor. Then there exists $f\in\So(\Omega)$ such that  $\exp_*(f)=g$.
\end{proposition}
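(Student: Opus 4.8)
The plan is to reduce to the normalized case $g^{s}\equiv1$ --- which forces the ``real part'' $g_{0}$ to be a constant, equal to $1$ or $-1$ --- and then to exhibit the $*$-logarithm explicitly, exploiting that $\exp_{*}$ of a zero divisor is \emph{linear}. Since $g_{v}$ is a zero divisor we have $g_{v}\not\equiv0$ and $g_{v}^{s}\equiv0$, so by Proposition~\ref{propGPS} the domain $\Omega$ is necessarily a product domain. By Remark~\ref{r1} (applicable because $\Omega$ is slice-contractible), $g$ admits a $*$-logarithm if and only if $\tilde g:=\exp(-\psi_{g}/2)\,g\in\So^{1}(\Omega)$ does, where $\exp(\psi_{g})=g^{s}$; moreover $\exp(-\psi_{g}/2)$ is slice preserving and never vanishing, so $\tilde g_{v}=\exp(-\psi_{g}/2)\,g_{v}$ is again a zero divisor. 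Hence it suffices to assume $g^{s}\equiv1$. Then Proposition~\ref{product-properties} gives $1\equiv g^{s}=g_{0}^{2}+g_{v}^{s}=g_{0}^{2}$, so $(g_{0}-1)(g_{0}+1)\equiv0$; as $\So_{\R}(\Omega)$ has no zero divisors (Proposition~\ref{propGPS}), either $g_{0}\equiv1$ or $g_{0}\equiv-1$.

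If $g_{0}\equiv1$, I would take $f:=g_{v}$: by~\eqref{munu} together with $\mu(0)=\nu(0)=1$ one has $\exp_{*}(g_{v})=\mu(g_{v}^{s})+\nu(g_{v}^{s})g_{v}=1+g_{v}=g$. If $g_{0}\equiv-1$, I would take $f:=\pi\mathcal{J}-g_{v}$. Since $\mathcal{J}$ is slice preserving, $\pi\mathcal{J}$ and $-g_{v}$ $*$-commute, hence $\exp_{*}(f)=\exp_{*}(\pi\mathcal{J})*\exp_{*}(-g_{v})$; by Example~\ref{exppiJ=-1} the first factor equals $-1$, and because $-g_{v}$ is again a zero divisor the second equals $1-g_{v}$, so $\exp_{*}(f)=-(1-g_{v})=-1+g_{v}=g$. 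In both cases the normalized $g$, and therefore the original one, has a $*$-logarithm.

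I do not expect a real obstacle: once the normalization pins $g_{0}$ down to the constant $\pm1$, everything collapses via $\mu(0)=\nu(0)=1$ and $\exp_{*}(\pi\mathcal{J})=-1$. The only point needing a little care is the case $g_{0}\equiv-1$, where one cannot just take $f=g_{v}$ (this would give $1+g_{v}\neq g$) but must twist by $\pi\mathcal{J}$ to absorb the sign, verifying that this twist $*$-commutes with $g_{v}$ so that the additivity of $\exp_{*}$ on commuting arguments applies.
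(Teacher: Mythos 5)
Your proof is correct and follows essentially the same route as the paper: normalize via Remark~\ref{r1} to $g^{s}\equiv1$, deduce $g_{0}\equiv\pm1$ from $g_{v}^{s}\equiv0$, and then take $f=g_{v}$ or $f=\pi\mathcal{J}-g_{v}$ using $\mu(0)=\nu(0)=1$ and $\exp_{*}(\pi\mathcal{J})=-1$. The extra checks you include (that the normalized vector part is still a zero divisor, and that $\pi\mathcal{J}$ $*$-commutes with $-g_{v}$) are details the paper leaves implicit, and they are handled correctly.
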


\begin{proof}
By Remark~\ref{r1}, we can suppose that $g^s=g_0^2+g_v^s\equiv 1$. As $g_v$ is a zero-divisor, we have $g_v^s\equiv 0$ and therefore $g_0^2\equiv1$, which entails that either $g_0\equiv1$ or $g_0\equiv-1$. 
In the first case a trivial computation gives
$\exp_*(g_v)=\mu(g_v^s)+\nu(g_v^s)g_v=\mu(0)+\nu(0)g_v=1+g_v=g_0+g_v=g$, in the second
$\exp_*(\pi \mathcal J-g_v)=\exp(\pi \mathcal J)*(\mu(g_v^s)+\nu(g_v^s)(-g_v))=-(\mu(0)-\nu(0)g_v)=-1+g_v=g_0+g_v=g$.
\end{proof}

\begin{assumption}\label{assumptionzerodiv}
Thanks to Proposition~\ref{zero-divisor} we can refine~\ref{notslicepreserving} by assuming that $g_{v}$ is neither identically zero nor a zero divisor.
\end{assumption}


Thanks to formula~\eqref{munu}, if $g=\exp_{*}(f_{v})$, then 
\begin{equation}\label{system1}
\begin{cases}
\mu(f_{v}^{s})=g_{0},\\
\nu(f_{v}^{s})f_{v}=g_{v}.
\end{cases}
\end{equation}

A first simple necessary condition in order to ensure the solvability of equation~\eqref{exp} entails the
behavior of $g$ at non-real isolated zeroes of $g_{v}$. As a surprising consequence we obtain that, the presence of
non-real isolated zeroes of $g_{v}$ could be an obstruction to the existence
of a $*$-logarithm of $g$.
This feature underlines the strong difference between the complex and the quaternionic case for the exponential function.


\begin{proposition}\label{g0=1}
If $g\in\So^{1}(\Omega)$ has a $*$-logarithm, we have that
\begin{enumerate}
\item if $\Omega$ is a slice domain and $q_{0}$ is a non-real isolated zero of $g_{v}$, then $g(q_{0})=1$;
\item if $\Omega$ is a product domain and $q_{0}, q_{1}$ are non-real
isolated zeroes of $g_{v}$, then, either $g(q_{0})=g(q_{1})=1$
or $g(q_{0})=g(q_{1})=-1$.
\end{enumerate}
%
%
%
%
\end{proposition}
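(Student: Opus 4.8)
The plan is to turn the hypothesis $\exp_*(f)=g$ into the system~\eqref{system1} and then to evaluate $g$ at a non-real isolated zero $q_0$ of $g_v$ by controlling the three factors $\mu(f_v^s)$, $\nu(f_v^s)$ and $f_v$ at $q_0$ separately. First I would normalise $f$: since $g\in\So^1(\Omega)$, Proposition~\ref{propos1} gives $f_0\equiv0$ when $\Omega$ is slice and $f_0\equiv n\pi\mathcal{J}$ for a fixed $n\in\Z$ when $\Omega$ is product. As $\mathcal{J}$ is slice preserving, $\exp_*(n\pi\mathcal{J})=\cos(n\pi)+\sin(n\pi)\mathcal{J}=(-1)^n$, so formula~\eqref{munu} gives $g=\varepsilon\,(\mu(f_v^s)+\nu(f_v^s)f_v)$ with $\varepsilon=1$ in the slice case and $\varepsilon=(-1)^n$ in the product case. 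Since $\mu(f_v^s)$ and $\nu(f_v^s)$ are slice preserving, comparing ``real'' and ``vector'' parts yields $g_0=\varepsilon\,\mu(f_v^s)$ and $g_v=\varepsilon\,\nu(f_v^s)f_v$, i.e.\ system~\eqref{system1} up to the global sign $\varepsilon$.

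Next I would exploit that $q_0$ is an \emph{isolated}, not spherical, zero of $g_v$ in order to pull the vanishing back onto $f_v$. By Assumption~\ref{assumptionzerodiv}, $g_v$ --- and hence also $f_v$ --- is not a zero divisor, so $q_0$ is isolated in the zero set of $g_v$ and $g_v\not\equiv0$ on the sphere $\SF_{q_0}$. Because $\nu(f_v^s)$ is slice preserving, if it vanished at the non-real point $q_0$ then, by the Representation Formula, it would vanish on all of $\SF_{q_0}$, forcing $g_v\equiv0$ there --- a contradiction; hence $\nu(f_v^s)(q_0)\neq0$, and then $g_v(q_0)=0$ forces $f_v(q_0)=0$. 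If $f_v$ vanished identically on $\SF_{q_0}$ we would again get $g_v\equiv0$ there, so $q_0$ is in fact a non-real isolated zero of $f_v$; by the standard fact recalled in Section~\ref{preliminaries} this gives $f_v^s\equiv0$ on $\SF_{q_0}$, in particular $f_v^s(q_0)=0$.

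Once this is in place the conclusion is immediate: $g_0(q_0)=\varepsilon\,\mu(f_v^s)(q_0)=\varepsilon\,\mu(0)=\varepsilon$, hence $g(q_0)=g_0(q_0)+g_v(q_0)=\varepsilon$. If $\Omega$ is slice then $\varepsilon=1$, which is assertion~(1). If $\Omega$ is product then $\varepsilon=(-1)^n$ is a single sign determined by $f$; running the same argument at the second zero $q_1$ gives $g(q_1)=(-1)^n$ as well, so $g(q_0)=g(q_1)$ and the common value is $1$ or $-1$, which is assertion~(2).

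The step I expect to be the genuine obstacle is the second one. A naive argument using only $g^s\equiv1$ --- equivalently $g_0^2+g_v^s\equiv1$ by Proposition~\ref{product-properties} --- together with $g_v^s(q_0)=0$ (which holds because $g_v(q_0)=0$) gives merely $g_0(q_0)^2=1$, i.e.\ $g_0(q_0)=\pm1$; it does not exclude the value $-1$, nor does it produce a common sign in the product case. What makes the difference is precisely the assumption that the zero of $g_v$ is isolated and not spherical: this is what lets the vanishing be transferred to $f_v$ at a point where $f_v^s$ vanishes, so that $\mu$ gets evaluated at $0$ (giving $1$) rather than at some $m^2\pi^2$ (which would give $(-1)^m$, the source of the spurious $-1$).
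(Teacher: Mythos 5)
Your proof is correct and follows essentially the same route as the paper's: reduce to system~\eqref{system1} via Proposition~\ref{propos1} (the paper phrases the product case as ``$f$ is a $*$-logarithm of $g$ or of $-g$'' where you carry the sign $\varepsilon=(-1)^n$ explicitly, which is the same thing), use that $\nu(f_v^s)$ is slice preserving to transfer the non-real isolated zero from $g_v$ to $f_v$, deduce $f_v^s(q_0)=0$, and conclude $g_0(q_0)=\varepsilon\,\mu(0)=\varepsilon$. Your closing remark correctly identifies why isolatedness (rather than just $g^s\equiv1$) is the essential hypothesis.
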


\begin{proof}

Let $\Omega$ be a slice domain and $f$ be a $*$-logarithm of $g$.
By Proposition~\ref{propos1} we have that $f=f_{v}$.
Then, if $q_{0}$ is a non-real isolated zero of $g_{v}$, then the second equation of system~\eqref{system1},
implies that $\nu(f_{v}^{s})f_{v}$ has a non-real isolated zero at $q_{0}$. Since $\nu(f_{v}^{s})$ is slice preserving,
then $q_{0}$ is a non-real isolated zero of $f_{v}$ and thus $f_{v}^{s}(q_{0})= 0$. Therefore the first equation
gives $g(q_{0})=g_{0}(q_{0})=\mu(0)=1$.

If $\Omega$ is a product domain, let $f=f_{v}$ be a $*$-logarithm of either $g$ or $-g$. Again, $\nu(f_{v}^{s})f_{v}$ has non-real isolated zeroes at $q_{0}$ and $q_{1}$, so that $f_{v}$ has non-real isolated zeroes at $q_{0}$ and $q_{1}$
and thus $f_{v}^{s}(q_{0})=f_{v}^{s}(q_{1})=0$. The first equation of System~\eqref{system1} thus gives $g(q_{0})=g(q_{1})=1$, if $f$ is a $*$-logarithm of $g$, and $-g(q_{0})=-g(q_{1})=1$, if $f$ is a $*$-logarithm of $-g$.
\end{proof}


In particular, if $\Omega$ is a slice domain, the previous proposition gives a
strong obstruction to the existence of a $*$-logarithm. The following two
corollaries give explicit restraints to the existence of a $*$-logarithm: the first one
applies to any slice domain, while the function must have a special form, the second
one holds in a smaller class of domains, but for a larger class of functions.

\begin{corollary}\label{sign-obstruction}
Let $\Omega$ be a slice domain and $f\in\So(\Omega)$ be such that $f_{v}$ has a non-real isolated zero. Then $-\exp_{*}(f)$ has no $*$-logarithm. 
\end{corollary}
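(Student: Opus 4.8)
The plan is to argue by contradiction: suppose $h\in\So(\Omega)$ satisfies $\exp_{*}(h)=-\exp_{*}(f)$, and derive a contradiction with Proposition~\ref{g0=1}. Let $q_{0}$ be a non-real isolated zero of $f_{v}$; in particular $f_{v}$ is neither identically zero nor a zero divisor, and, since a non-real isolated zero forces the symmetrized function to vanish on the whole sphere through it, $f_{v}^{s}(q_{0})=0$.

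First I would normalize so that the function whose $*$-logarithm is in question lies in $\So^{1}(\Omega)$. Taking symmetrized functions in $\exp_{*}(h)=-\exp_{*}(f)$ and using $(\exp_{*}(\cdot))^{s}=\exp(2(\cdot)_{0})$ gives $\exp(2h_{0})\equiv\exp(2f_{0})$; since $\Omega$ is slice and $h_{0},f_{0}$ are slice preserving, Proposition~\ref{prophvequivkv0} yields $h_{0}\equiv f_{0}$. Writing $\exp_{*}(h)=\exp(f_{0})\exp_{*}(h_{v})$ and $-\exp_{*}(f)=-\exp(f_{0})\exp_{*}(f_{v})$ and cancelling the never-vanishing slice preserving factor $\exp(f_{0})$, which is not a zero divisor by Proposition~\ref{propGPS}, I obtain $\exp_{*}(h_{v})=-\exp_{*}(f_{v})=:g$. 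By formula~\eqref{munu}, $g^{s}=(\exp_{*}(f_{v}))^{s}=\exp(0)\equiv 1$, so $g\in\So^{1}(\Omega)$ and $h_{v}$ is a $*$-logarithm of $g$. (Equivalently, this reduction is Remark~\ref{r1} applied to $-\exp_{*}(f)$.)

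Next I would locate a non-real isolated zero of $g_{v}$. By~\eqref{munu}, $g_{0}=-\mu(f_{v}^{s})$ and $g_{v}=-\nu(f_{v}^{s})f_{v}$. The factor $\nu(f_{v}^{s})$ is slice preserving and $\nu(f_{v}^{s})(q_{0})=\nu(0)=1\ne 0$, so $\nu(f_{v}^{s})$ is non-vanishing on a neighbourhood of $q_{0}$; hence on that neighbourhood the zero set of $g_{v}$ coincides with that of $f_{v}$, and $q_{0}$ is a non-real isolated zero of $g_{v}$ as well. Proposition~\ref{g0=1}(1) then forces $g(q_{0})=1$. On the other hand $g(q_{0})=g_{0}(q_{0})+g_{v}(q_{0})=-\mu(f_{v}^{s})(q_{0})+0=-\mu(0)=-1$, a contradiction. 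Therefore no such $h$ exists, i.e.\ $-\exp_{*}(f)$ has no $*$-logarithm.

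There is no serious obstacle here: the statement is essentially a corollary of Proposition~\ref{g0=1}, and the only points requiring care are the normalization to $\So^{1}(\Omega)$, needed so that Proposition~\ref{g0=1} applies, and the remark that multiplying $f_{v}$ by the slice preserving function $\nu(f_{v}^{s})$, which equals $1$ at $q_{0}$, does not destroy the isolated zero.
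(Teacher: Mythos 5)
Your proof is correct and follows essentially the same route as the paper's: reduce to $g=-\exp_{*}(f_{v})\in\So^{1}(\Omega)$, evaluate at the non-real isolated zero $q_{0}$ of $f_{v}$ to get $g(q_{0})=-\mu(0)=-1$, and contradict Proposition~\ref{g0=1}. You merely make explicit two details the paper leaves implicit, namely the cancellation of $\exp(f_{0})$ (which the paper handles by citing the invariance of $*$-logarithm existence under multiplication by $\exp$ of a slice preserving function) and the observation that $\nu(f_{v}^{s})(q_{0})=1\neq0$ guarantees $q_{0}$ is genuinely a non-real isolated zero of $g_{v}$, so that Proposition~\ref{g0=1} applies.
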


\begin{proof}
As $-\exp_{*}(f)=-\exp(f_{0})\exp_{*}(f_{v})$, then $-\exp_{*}(f)$ has a $*$-logarithm
if and only if $-\exp_{*}(f_{v})$ has. Now notice that $(-\exp_{*}(f_{v}))^{s}\equiv 1$,
so, if $-\exp_{*}(f_{v})$ has a $*$-logarithm, then it fulfills the hypotheses of Proposition~\ref{g0=1}.
Let us denote by $q_{0}$ a non-real isolated zero of $f_{v}$. On
$\SF_{q_{0}}$, we have $f_{v}^{s}\equiv 0$, hence $\mu(f_{v}^{s})=1$ on $\SF_{q_{0}}$ and thus $-\exp_{*}(f_{v})(q_{0})=-1$ which is a contradiction to Proposition~\ref{g0=1}.
%
%
\end{proof}

\begin{corollary}
Let $\Omega$ be a slice contractible slice domain and $g\in\So(\Omega)$ be a never-vanishing function such that $g_{v}$ has a non-real isolated zero. Then at least one 
between $g$ or $-g$ has no $*$-logarithm. 
\end{corollary}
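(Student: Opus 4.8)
The plan is to reduce to the normalized setting $g^{s}\equiv 1$ and then to play the constraint of Proposition~\ref{g0=1} against itself for the two functions $g$ and $-g$. First I would record that the hypotheses are symmetric under $g\mapsto -g$: both functions are never vanishing, $(-g)^{s}=g^{s}$, and $(-g)_{v}=-g_{v}$, so $g_{v}$ and $(-g)_{v}$ have exactly the same zero set; in particular the given non-real isolated zero $q_{0}$ of $g_{v}$ is also a non-real isolated zero of $(-g)_{v}$. Since $\Omega$ is a slice-contractible slice domain, Remark~\ref{r1} furnishes a (unique) $\psi_{g}\in\So_{\R}(\Omega)$ with $\exp(\psi_{g})=g^{s}$ and guarantees that $g$ has a $*$-logarithm if and only if $\tilde g:=\exp(-\psi_{g}/2)\,g\in\So^{1}(\Omega)$ has one. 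Because $\psi_{-g}=\psi_{g}$ (uniqueness on a slice domain), the same slice-preserving, never-vanishing factor normalizes $-g$, giving $\exp(-\psi_{g}/2)(-g)=-\tilde g\in\So^{1}(\Omega)$, so $-g$ has a $*$-logarithm if and only if $-\tilde g$ does. As $\exp(-\psi_{g}/2)$ is slice preserving and never vanishing, $\tilde g_{v}=\exp(-\psi_{g}/2)\,g_{v}$ shares the zero set of $g_{v}$; hence $q_{0}$ remains a non-real isolated zero of both $\tilde g_{v}$ and $(-\tilde g)_{v}=-\tilde g_{v}$.

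Next I would argue by contradiction. Suppose both $g$ and $-g$ admit a $*$-logarithm; equivalently both $\tilde g$ and $-\tilde g$ do. Both lie in $\So^{1}(\Omega)$ and $\Omega$ is a slice domain, so Proposition~\ref{g0=1}(1), applied to $\tilde g$ at the non-real isolated zero $q_{0}$ of $\tilde g_{v}$, forces $\tilde g(q_{0})=1$. Applying the very same proposition to $-\tilde g$, whose vector part $-\tilde g_{v}$ also vanishes at the isolated non-real point $q_{0}$, forces $(-\tilde g)(q_{0})=1$, that is $\tilde g(q_{0})=-1$. These two conclusions are incompatible, since $1\neq -1$ in $\HH$. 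This contradiction shows that $\tilde g$ and $-\tilde g$ cannot both have a $*$-logarithm, hence neither can $g$ and $-g$, which is exactly the assertion that at least one of $g$, $-g$ has no $*$-logarithm.

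The single step requiring genuine care, and the one I expect to be the main (if modest) obstacle, is the simultaneous normalization: one must verify that the factor $\exp(-\psi_{g}/2)$ produced by Remark~\ref{r1} for $g$ also normalizes $-g$ — which is precisely the content of $\psi_{-g}=\psi_{g}$ — and that this slice-preserving, never-vanishing rescaling neither creates nor destroys the non-real isolated zero $q_{0}$ of the vector parts. Once this is checked, Proposition~\ref{g0=1} applies to $\tilde g$ and $-\tilde g$ at the common point $q_{0}$, and the sign clash $\tilde g(q_{0})=\pm 1$ closes the argument. I would note in passing that the same conclusion can be reached by instead invoking Corollary~\ref{sign-obstruction}, extracting from the second equation of system~\eqref{system1} that any $*$-logarithm $f=f_{v}$ of $\tilde g$ must itself have a non-real isolated zero at $q_{0}$; but the direct double application of Proposition~\ref{g0=1} above is the cleaner route.
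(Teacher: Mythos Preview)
Your proof is correct and follows exactly the paper's route: normalize via Remark~\ref{r1} using $\psi_{-g}=\psi_{g}$, observe that $\exp(-\psi_{g}/2)(\pm g)$ are opposite elements of $\So^{1}(\Omega)$, and then derive a contradiction from Proposition~\ref{g0=1} at the non-real isolated zero $q_{0}$. The paper's proof is simply terser, writing only ``Proposition~\ref{g0=1} ensures that one of these two functions has no $*$-logarithm'' where you spell out the sign clash $\tilde g(q_{0})=1=-\tilde g(q_{0})$ explicitly.
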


\begin{proof}
By using the notation
contained in Remark~\ref{r1}, we have that $\psi_{g}=\psi_{-g}$, thus $\exp_{*}(-\psi_{g}/2)g$
and $\exp_{*}(-\psi_{-g}/2)(-g)$ are opposite one another. 
Proposition~\ref{g0=1} ensures that one of these two functions has no $*$-logarithm and thus at least one of the two functions $g$ and $-g$ has no $*$-logarithm, too.
\end{proof}


\begin{example}\label{non-existence}
In view of Corollary~\ref{sign-obstruction}, we may give a large family of examples 
of never-vanishing functions without $*$-logarithm. Take for instance
the polynomials $(q-i)*j$, $(q-i)^{*2}*j$ or $(q-i)*(q-2j)*(-2i+j)$. It is not difficult to check that
these three polynomials
have no ``real part'' and have only non-real isolated zeros. Therefore,
the functions $-\exp_{*}((q-i)*j)$, $-\exp_{*}((q-i)^{*2}*j)$ and $-\exp_{*}((q-i)*(q-2j)*(-2i+j))$
have no $*$-logarithm. In particular, notice that $-\exp_{*}((q-i)^{*2}*j)_{v}=-\nu((q^{2}+1)^{2})(q-i)^{*2}*j$ and its symmetrized function is given by
$\nu((q^{2}+1)^{2})^{2}(q^{2}+1)^{2}$ which trivially has the square root 
$\nu((q^{2}+1)^{2})(q^{2}+1)$. This provides the example 
of a function $g\in\So^{1}(\HH)$ such that $g_{v}^{s}$ has a square root but which has no $*$-logarithm
we were referring in Remark~\ref{remark1}.
\end{example}

\begin{remark}\label{oneslice}
Notice that if $g$ is one-slice preserving, then $g_{v}$ cannot have non-real isolated zeroes. Indeed, if the preserved slice
lies is $\C_{I}$, then $g_{v}=g_{1}I$, where $g_{1}\in\So_{\R}(\Omega)$ and hence $g_{v}$ has only real and
spherical isolated zeroes.
\end{remark}

If $\Omega$ is slice contractible, as $g_{v}$ is neither identically zero nor a zero divisor by Assumption~\ref{assumptionzerodiv},
following the outline of the proof of~\cite[Proposition 3.1]{A-dFAMPA} (and taking as $\Omega_{I}^{+}$ the unitary disc centered at $2i$ and $h$ without spherical zeroes in the case of a product domain), we can find $\alpha\in\So_{\R}(\Omega)$ and $W\in\So(\Omega)$, such that 
\begin{equation}\label{gvalphaW}
g_{v}=\alpha W,
\end{equation}
where $W\not \equiv 0$ is not a zero divisor and has neither real nor spherical zeroes.
In particular, we notice that $W_{0}\equiv 0$.
The second equation of system~\eqref{system1} becomes now 
\begin{equation}\label{alphaW}
\nu(f_{v}^{s})f_{v}=\alpha W.
\end{equation}

\begin{lemma}\label{betaW}
Let $f$ be a slice regular function satisfying equation~\eqref{alphaW}, where $\alpha\in\So_{\R}(\Omega)\setminus\{0\}$ and $W$ has
neither real nor spherical zeroes. Then there exists $\beta\in\So_{\R}(\Omega)$, such that $f_{v}=\beta W$.
\end{lemma}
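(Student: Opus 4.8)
The plan is to exploit the fact that, by equation~\eqref{alphaW}, the vector part $f_v$ is ``proportional'' to $W$ once we divide out by the scalar slice-preserving factor $\nu(f_v^s)$, and then to check that this quotient is genuinely slice regular (i.e. that the zeroes of $\nu(f_v^s)$ cause no problem) precisely because $W$ has no real and no spherical zeroes. First I would record that $f_v^s$ is slice preserving, so $\nu(f_v^s)\in\So_\R(\Omega)$, and that $\nu(f_v^s)f_v=\alpha W$ forces (by taking the symmetrized function of both sides, using Proposition~\ref{product-properties}) the identity $\nu(f_v^s)^2 f_v^s=\alpha^2 W^s$. Since $\alpha\not\equiv0$ and $W$ is not a zero divisor, $W^s\not\equiv0$, so $\nu(f_v^s)$ is not identically zero; hence on the (dense, open) set where $\nu(f_v^s)$ does not vanish we may write $f_v=\alpha\,\nu(f_v^s)^{-1}W$, and the candidate for $\beta$ is the slice-preserving function $\alpha/\nu(f_v^s)$, which we must extend regularly across the zero set of $\nu(f_v^s)$.

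Next I would analyse the zero set of $\nu(f_v^s)$. A zero of the slice-preserving function $\nu(f_v^s)$ lies on a full sphere $\SF_{q_0}$ and, by Remark~\ref{zero}, occurs exactly where $f_v^s$ takes a value in $\{\pi^2 n^2 : n\in\Z\setminus\{0\}\}$ (a real, strictly positive value), so such $q_0$ is never real. From $\nu(f_v^s)f_v=\alpha W$ evaluated on $\SF_{q_0}$ we get $\alpha W\equiv 0$ there; since $W$ has no spherical zeroes, $W$ is not identically zero on $\SF_{q_0}$, and because $\alpha$ is slice preserving it follows that $\alpha\equiv 0$ on $\SF_{q_0}$. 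Thus every spherical zero of $\nu(f_v^s)$ is also a spherical zero of $\alpha$. Comparing orders: working in a toric neighbourhood $U_{q_0}$ and writing $\Delta_{q_0}$ for the characteristic slice-preserving polynomial of the sphere, one has $\nu(f_v^s)=\Delta_{q_0}^{\,a}\,u$ and $\alpha=\Delta_{q_0}^{\,b}\,v$ with $u,v\in\So_\R(U_{q_0})$ never vanishing on $\SF_{q_0}$; the identity $\nu(f_v^s)^2 f_v^s=\alpha^2 W^s$ together with the fact that $W$ (hence $W^s$) has no zeroes on $\SF_{q_0}$ forces $2a=2b$, i.e. $a=b$. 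Therefore $\beta:=\alpha/\nu(f_v^s)=\Delta_{q_0}^{\,b-a}\,v/u=v/u$ is slice regular (indeed slice preserving) across $\SF_{q_0}$, and it visibly agrees with $\alpha\,\nu(f_v^s)^{-1}$ off $\SF_{q_0}$.

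Patching these local extensions (which are forced to coincide on overlaps by the Identity Principle, since they all equal $\alpha/\nu(f_v^s)$ where $\nu(f_v^s)\neq0$, a set with accumulation points in each slice), we obtain a single $\beta\in\So_\R(\Omega)$ with $\nu(f_v^s)\beta=\alpha$ on all of $\Omega$. Multiplying $f_v=\alpha\,\nu(f_v^s)^{-1}W$ through and using that $\nu(f_v^s)$, being slice preserving and not a zero divisor, may be cancelled (zero-product property, Proposition~\ref{propGPS}), we conclude $f_v=\beta W$ on $\Omega$, as desired. The main obstacle is exactly the middle step: making sure the scalar factor $\nu(f_v^s)$ and the factor $\alpha$ vanish to the \emph{same} order along each sphere $\SF_{q_0}$, so that their quotient extends holomorphically — this is where the hypotheses that $W$ has neither real nor spherical zeroes, and that $\alpha\not\equiv0$, are genuinely used.
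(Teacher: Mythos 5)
Your overall strategy coincides with the paper's: show that $\nu(f_v^s)$ is not identically zero, compare the order of vanishing of $\alpha$ with that of $\nu(f_v^s)$ along the zero set of the latter (using that $W$ never vanishes there), define $\beta$ as the regular quotient $\alpha/\nu(f_v^s)$, and cancel the slice preserving factor $\nu(f_v^s)$ at the end. The final cancellation and the patching via the Identity Principle are fine, and your observation that $f_v^s$ equals a nonzero constant $\pi^2n^2$ on each zero sphere of $\nu(f_v^s)$ (so that the orders actually agree, $a=b$) is correct and even slightly sharper than the inequality the paper uses.

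There is, however, one genuine error: the claim that a zero of $\nu(f_v^s)$ ``is never real.'' The fact that $f_v^s$ must take the value $\pi^2n^2\in\R_{>0}$ at such a point does not force the point off the real axis; on the contrary, $f_v^s=f_1^2+f_2^2+f_3^2$ is automatically real and non-negative on $\Omega\cap\R$, so real zeroes of $\nu(f_v^s)$ occur quite naturally when $\Omega$ is a slice domain (e.g.\ $f_v=qi$ gives $\nu(f_v^s)=\nu(q^2)=\sin(q)/q$, vanishing at $q=\pm\pi,\pm2\pi,\dots$). Because of this, your local analysis treats only spherical zeroes (the toric neighbourhood and the factor $\Delta_{q_0}^a$), the real isolated zeroes of $\nu(f_v^s)$ are left uncovered, and the hypothesis that $W$ has no \emph{real} zeroes is never actually invoked. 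The repair is routine and is exactly what the paper does: at a real isolated zero $x_0$ of $\nu(f_v^s)$ of multiplicity $n$, equation~\eqref{alphaW} together with $W(x_0)\neq0$ forces $\alpha$ to vanish at $x_0$ with multiplicity at least $n$, and the same quotient construction goes through with $(q-x_0)$ in place of $\Delta_{q_0}$. With that case added, your argument is complete.
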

\begin{proof}
Equation~\eqref{alphaW} ensures that $\nu(f_{v}^{s})$ is not identically zero.
If $q_{0}$ is a real isolated zero of $\nu(f_{v}^{s})$ of multiplicity $n$, then it is a real isolated zero of $\alpha W$ of
multiplicity greater or equal than $n$; as $W$ has no real zeroes, then $q_{0}$ is a real isolated zero of $\alpha$ with
multiplicity greater or equal than $n$. The same holds for spherical zeroes of $\nu(f_{v}^{s})$. Chose $I\in\mathbb{S}$
and consider the restriction of both $\nu(f_{v}^{s})$ and $\alpha$ to $\Omega_{I}$. The above considerations on the multiplicities of these functions entail that there exists an (intrinsic) holomorphic function $\beta_{I}$ on $\Omega_{I}$ such that 
$\alpha=\beta_{I}\nu(f_{v}^{s})$ on $\Omega_{I}$; we denote by $\beta$ the regular extension of $\beta_{I}$ to $\Omega$.
As both $\alpha$ and $\nu(f_{v}^{s})$ are slice preserving, then $\beta$ is slice preserving too and
$\alpha=\beta\nu(f_{v}^{s})$ on $\Omega$ by the Identity Principle.
Now write $\nu(f_{v}^{s})f_{v}=\beta \nu(f_{v}^{s})W$, as $\nu(f_{v}^{s})$
is a non-identically zero slice preserving function, we obtain that $f_{v}=\beta W$.
\end{proof}

Thanks to system~\eqref{system1} and Lemma~\ref{betaW} we obtain $\nu(W^{s}\beta^{2})\beta W=\alpha W$
that entails

\begin{lemma}\label{solution} 
Let $g\in\So^{1}(\Omega)$ be such that $g_{v}=\alpha W$ as in equation~\eqref{gvalphaW}.
If $f_{v}$ is a $*$-logarithm of $g$, then there exists $\beta\in\So_{\R}(\Omega)$,
such that $f_{v}=\beta W$ and $\beta$ satisfies 
\begin{equation}\label{system2}
\begin{cases}
\mu(W^{s}\beta^{2})=g_{0},\\
\nu(W^{s}\beta^{2})\beta=\alpha.
\end{cases}
\end{equation} 
Viceversa if $\beta$ is a solution of the previous system, then $f_{v}=\beta W$ is a 
$*$-logarithm of $g$.
\end{lemma}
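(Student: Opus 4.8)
The plan is to translate the equation $\exp_*(f_v)=g$ into its real and vector components via formula~\eqref{munu}, use Lemma~\ref{betaW} to replace $f_v$ by a slice preserving multiple of $W$, and then extract system~\eqref{system2} by cancelling the factor $W$; the converse will be a one-line substitution.

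First I would assume $f_v$ is a $*$-logarithm of $g$, so that $\exp_*(f_v)=g$. Since $f_v$ has vanishing real part, formula~\eqref{munu} gives $\exp_*(f_v)=\mu(f_v^s)+\nu(f_v^s)f_v$, where $\mu(f_v^s)\in\So_\R(\Omega)$ while $\nu(f_v^s)f_v$ has vanishing real part. Matching these against the (unique) splitting $g=g_0+g_v$ yields system~\eqref{system1}, that is $\mu(f_v^s)=g_0$ and $\nu(f_v^s)f_v=g_v=\alpha W$. The second identity is precisely equation~\eqref{alphaW}, and since $W$ has neither real nor spherical zeroes, Lemma~\ref{betaW} furnishes $\beta\in\So_\R(\Omega)$ with $f_v=\beta W$. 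Because $\beta$ is slice preserving, $f_v^s=(\beta W)^s=\beta^2W^s$, so the first equation of~\eqref{system1} becomes $\mu(W^s\beta^2)=g_0$, the first line of~\eqref{system2}.

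It then remains to obtain the second line of~\eqref{system2}. Substituting $f_v=\beta W$ into $\nu(f_v^s)f_v=\alpha W$ gives $\nu(W^s\beta^2)\beta W=\alpha W$, i.e. $\bigl(\nu(W^s\beta^2)\beta-\alpha\bigr)*W\equiv0$. The left factor lies in $\So_\R(\Omega)$, hence commutes with $W$, and $W$ is not a zero divisor, so $W^s\not\equiv0$; Proposition~\ref{propGPS} then forces $\nu(W^s\beta^2)\beta-\alpha\equiv0$, which is the second line of~\eqref{system2}. This cancellation of the non-slice-preserving factor $W$ is the only delicate point of the argument; everything else is bookkeeping with the real/vector splitting together with the fact that $\beta$ being slice preserving lets one pass symmetrizations and $\mu,\nu$ through it.

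Finally, for the converse I would take any $\beta\in\So_\R(\Omega)$ solving~\eqref{system2} and set $f_v:=\beta W$. Since $W_0\equiv0$ and $\beta$ is slice preserving, $f_v$ has vanishing real part, and $f_v^s=\beta^2W^s$; hence formula~\eqref{munu} yields $\exp_*(f_v)=\mu(\beta^2W^s)+\nu(\beta^2W^s)\beta W=g_0+\alpha W=g_0+g_v=g$, so $f_v$ is indeed a $*$-logarithm of $g$.
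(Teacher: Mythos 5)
Your proof is correct and follows essentially the same route as the paper's: split $\exp_*(f_v)$ via formula~\eqref{munu}, invoke Lemma~\ref{betaW} to write $f_v=\beta W$, cancel the non-zero-divisor $W$ to get system~\eqref{system2}, and verify the converse by direct substitution. Your explicit justification of the cancellation step through Proposition~\ref{propGPS} (using that the slice preserving factor commutes with $W$ and that $W^s\not\equiv0$) is a slightly more detailed version of the paper's ``we can cancel $W$'' and is exactly the intended argument.
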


\begin{proof}
Suppose $f_{v}$ is a $*$-logarithm of $g$, i.e. a solution of $\exp_{*}(f_{v})=g$. Then, Lemma~\ref{betaW} ensures that there exists $\beta\in\So_{\R}(\Omega)$
such that $f_{v}=\beta W$. Thus $\exp_{*}(f_{v})=g$ is equivalent to 
\begin{equation*}
\begin{cases}
\mu(W^{s}\beta^{2})=g_{0},\\
\nu(W^{s}\beta^{2})\beta W=\alpha W.
\end{cases}
\end{equation*} 
As $W$ is not identically zero and $\nu(W^{s}\beta^{2})\beta, \alpha\in\So_{\R}(\Omega)$, we can cancel $W$ and hence we obtain system~\eqref{system2}.

Vice versa suppose that $\beta$ is a solution of system~\eqref{system2} and set $f=f_{v}=\beta W$. Thus
\begin{equation*}
\exp_{*}(f_{v})=\mu(f_{v}^{s})+\nu(f_{v}^{s})f_{v}=\mu(W^{s}\beta^{2})+\nu(W^{s}\beta^{2})\beta W=g_{0}+\alpha W=g.
\end{equation*}
\end{proof}

This result allows us to prove that any function with $*$-logarithm
carries along  a whole family of functions with $*$-logarithm, thus generalizing Remark~\ref{conjugate}.

\begin{corollary}\label{equivalentsystem}
Let $g\in\So^{1}(\Omega)$ be such that $g_{v}=\alpha W$, where $\alpha\in\So_{\R}(\Omega)$ and $W$ has neither real nor spherical zeroes. If 
$g$ has a $*$-logarithm, for any $U\in\So(\Omega)$ such that $U_{0}\equiv 0$ and $U^{s}\equiv W^{s}$, the function
$\tilde g=g_{0}+\alpha U$ has a $*$-logarithm as well.
\end{corollary}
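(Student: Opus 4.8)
The plan is to observe that the system~\eqref{system2} which governs the $*$-logarithms of a function of the form $g_{0}+\alpha W$ depends on $W$ only through its symmetrized function $W^{s}$; since $U^{s}\equiv W^{s}$, the very same $\beta$ that solves it for $g$ will solve it for $\tilde g$. So there is essentially nothing to compute beyond what Lemma~\ref{solution} already gives.

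First I would check that $\tilde g$ lies in the class of functions treated in this section. Because $U_{0}\equiv 0$ and $U^{s}\equiv W^{s}$, one computes
\[
\tilde g^{s}=g_{0}^{2}+(\alpha U)^{s}=g_{0}^{2}+\alpha^{2}U^{s}=g_{0}^{2}+\alpha^{2}W^{s}=g_{0}^{2}+(\alpha W)^{s}=g^{s}\equiv 1,
\]
so $\tilde g\in\So^{1}(\Omega)$ and is never vanishing; moreover $\tilde g_{v}=\alpha U$ is not identically zero (as $\alpha\not\equiv 0$, since $g_{v}=\alpha W\not\equiv 0$) and not a zero divisor (as $(\alpha U)^{s}=\alpha^{2}W^{s}\not\equiv 0$), in accordance with Assumptions~\ref{notslicepreserving} and~\ref{assumptionzerodiv}.

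Next, since $g$ has a $*$-logarithm, Lemma~\ref{solution} furnishes $\beta\in\So_{\R}(\Omega)$ with $\mu(W^{s}\beta^{2})=g_{0}$ and $\nu(W^{s}\beta^{2})\beta=\alpha$. I would then verify directly that $f_{v}:=\beta U$ is a $*$-logarithm of $\tilde g$. As $\beta$ is slice preserving and $U=U_{v}$, the ``real part'' of $\beta U$ vanishes and $(\beta U)^{s}=\beta^{2}U^{s}=\beta^{2}W^{s}$; hence formula~\eqref{munu} gives
\[
\exp_{*}(\beta U)=\mu(\beta^{2}W^{s})+\nu(\beta^{2}W^{s})\,\beta U=g_{0}+\alpha U=\tilde g ,
\]
where the last equality uses the two identities of~\eqref{system2}. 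Equivalently, one may invoke the ``viceversa'' part of Lemma~\ref{solution} applied to $\tilde g$, once it is noted that $U$ inherits from $W$ — again through $U^{s}\equiv W^{s}$, which controls the real and spherical zeroes — the absence of such zeroes.

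I do not expect a genuine obstacle: the whole content is the remark that \eqref{system2} is insensitive to replacing $W$ by any $U$ with $U_{0}\equiv 0$ and $U^{s}\equiv W^{s}$. The only point deserving a line of care is checking that $\tilde g$ satisfies the standing hypotheses (membership in $\So^{1}(\Omega)$ and $\tilde g_{v}$ neither zero nor a zero divisor), so that either Lemma~\ref{solution} or the bare formula~\eqref{munu} can be applied to it, and this is immediate from $U^{s}\equiv W^{s}$.
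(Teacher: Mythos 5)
Your overall strategy coincides with the paper's, and your central computation --- that $\exp_{*}(\beta U)=\mu(\beta^{2}W^{s})+\nu(\beta^{2}W^{s})\,\beta U=g_{0}+\alpha U$ once $\beta$ solves system~\eqref{system2} --- is exactly the ``straightforward computation'' the paper invokes. There is, however, a gap in how you produce $\beta$. Lemma~\ref{solution} requires a $*$-logarithm of $g$ \emph{of the form} $f_{v}$, i.e.\ one with identically zero real part, whereas the corollary only assumes that $g$ admits some $*$-logarithm $f=f_{0}+f_{v}$. By Proposition~\ref{propos1}, if $\Omega$ is slice then $f_{0}\equiv 0$ and your argument goes through verbatim; but if $\Omega$ is a product domain then $f_{0}\equiv n\pi\mathcal{J}$, and for $n$ odd one has $\exp_{*}(f_{v})=-g$, not $g$. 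In that case system~\eqref{system2} for $g$ itself need not admit any solution; what Lemma~\ref{solution} actually yields is a $\beta\in\So_{\R}(\Omega)$ with $\mu(W^{s}\beta^{2})=-g_{0}$ and $\nu(W^{s}\beta^{2})\beta=-\alpha$, whence $\exp_{*}(\beta U)=-\tilde g$, and the correct $*$-logarithm of $\tilde g$ is $f_{0}+\beta U$, using $\exp_{*}(f_{0})=\exp(n\pi\mathcal{J})=-1$ as in Example~\ref{exppiJ=-1}. This is precisely the case distinction the paper makes (``if $\exp_{*}(f_{v})=-g$, we apply the above reasoning to $-g$''), and your proof needs it to cover product domains.

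The remaining checks in your write-up (that $\tilde g^{s}\equiv 1$ and that $\tilde g_{v}=\alpha U$ is neither identically zero nor a zero divisor) are correct, but they are not needed for the conclusion; the real point to repair is the sign dichotomy above.
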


\begin{proof}
Let $f_{0}+f_{v}$ be a $*$-logarithm of $g$. By Proposition~\ref{propos1}, either $\exp_{*}(f_{v})=g$
or $\exp_{*}(f_{v})=-g$. In the first case, Lemma~\ref{solution} shows that there exists $\beta\in\So_{\R}(\Omega)$ such that $f_{v}=\beta W$
and $\beta$ satisfies~\eqref{system2}. A straightforward computation shows that
$\exp_{*}(\beta U)=\tilde{g}$.  
If $\exp_{*}(f_{v})=-g$, we apply the above reasoning to $-g$, obtaining
that $\exp_{*}(f_{0}+\beta U)=\tilde{g}$.
\end{proof}


Our first positive result on the solvability of equation~\eqref{exp} deals with the more manageable case in which $g_{v}$ 
has no non-real isolated zeroes, that is the function $W$ appearing in equation~\eqref{alphaW} is never vanishing.
The next theorem provides the existence of a $*$-logarithm for this class
of functions.

\begin{theorem}\label{gvhasnozeroes} Let $\Omega$ be slice contractible. Then any $g\in\So^{*}(\Omega)$ such that $g_{v}$ has no non-real isolated zeroes has a $*$-logarithm. 
\end{theorem}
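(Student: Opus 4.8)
The plan is to reduce, via the normalizations already set up, to solving system~\eqref{system2} when $W$ is never vanishing, and then to invoke the biregular inverse $\varphi$ of $\mu|_{\mathcal D_0}$ from Section~\ref{mu}. By Remark~\ref{r1} we may assume $g\in\So^1(\Omega)$, and by Assumption~\ref{notslicepreserving}, Assumption~\ref{assumptionzerodiv}, together with Proposition~\ref{zero-divisor}, we may assume $g_v\not\equiv0$ is not a zero divisor; since $g_v$ has no non-real isolated zeroes, the factor $W$ in~\eqref{gvalphaW} is a never-vanishing function with $W_0\equiv0$ and $W^s\equiv \alpha^{-2}g_v^s$. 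By Lemma~\ref{solution} it suffices to produce $\beta\in\So_\R(\Omega)$ solving
\begin{equation*}
\begin{cases}
\mu(W^s\beta^2)=g_0,\\
\nu(W^s\beta^2)\beta=\alpha.
\end{cases}
\end{equation*}

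First I would observe that the natural candidate for $W^s\beta^2$ is $\varphi(g_0)$: since $g^s\equiv1$ we have $g_0^2+g_v^s\equiv1$, so on $\Omega\cap\R$ (if $\Omega$ is slice) $g_0$ takes values in $[-1,1]$, and more to the point $g_0$ never takes values in $(-\infty,-1]$ because that would force $g_v^s<0$ somewhere, impossible for a symmetrized function; hence $g_0$ maps $\Omega$ into $\HH\setminus(-\infty,-1]$ and $\varphi\circ g_0\in\So_\R(\Omega)$ is well defined (slice regularity of the composition is automatic since $\mu$, $\varphi$ are slice preserving). Set $\sigma:=\varphi\circ g_0$, so $\mu(\sigma)=g_0$. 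Using the relation $\mu^2(\sigma)+\nu^2(\sigma)\sigma\equiv1$ from~\eqref{sommaquadrati} we get $\nu(\sigma)^2\sigma=1-g_0^2=g_v^s=\alpha^2 W^s$; moreover $\nu$ is never vanishing on $\mathcal D_0$ (Remark~\ref{zero}), so $\nu(\sigma)$ is never vanishing, and $\sigma=\alpha^2 W^s/\nu(\sigma)^2$. The remaining task is to choose $\beta$ with $W^s\beta^2=\sigma$ and $\nu(\sigma)\beta=\alpha$; the second equation forces the candidate $\beta:=\alpha/\nu(\sigma)\in\So_\R(\Omega)$, and then $W^s\beta^2=W^s\alpha^2/\nu(\sigma)^2=\sigma$ exactly by the identity just derived, so both equations of the system hold. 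Then $f:=\beta W$ is the desired $*$-logarithm by the converse half of Lemma~\ref{solution} (in the product case where Proposition~\ref{propos1} allows $f_0\equiv n\pi\mathcal J$, one handles $-g$ in place of $g$ in the standard way, exactly as in Corollary~\ref{equivalentsystem}).

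The step I expect to be the main obstacle — or at least the one requiring genuine care rather than bookkeeping — is verifying that $g_0$ genuinely avoids the slit $(-\infty,-1]$ so that $\varphi\circ g_0$ is defined on all of $\Omega$: one must rule out that $g_0$ takes a real value $\le -1$ at a non-real point $q$, and here the argument is that $g_0$ is slice preserving, so $g_0(q)\in\R$ forces $g_0$ constant on $\SF_q$ with $g_v^s(q)=1-g_0(q)^2\le0$, and $g_v^s\ge0$ only on $\Omega\cap\R$ in general, so one needs $g_v^s\equiv0$ on $\SF_q$, i.e. $q$ is a non-real zero of $g_v$; since $g_v$ has no non-real \emph{isolated} zeroes and is not a zero divisor, such spherical zeroes are isolated spheres where in fact $g_0=\pm1$, not in $(-\infty,-1)$, and the endpoint value $g_0=-1$ on such a sphere must be excluded separately (this is precisely where the hypothesis ``no non-real isolated zeroes'' interacts with the obstruction identified in Proposition~\ref{g0=1}, so one expects the value there to be forced to $+1$, or the sphere to be handled by the local factorization). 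The rest is routine manipulation with $\mu$, $\nu$, and~\eqref{sommaquadrati}, plus the Identity Principle to patch slice data.
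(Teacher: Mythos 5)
There is a genuine gap, and it sits exactly where you suspected it might: the claim that $g_{0}$ maps $\Omega$ into $\HH\setminus(-\infty,-1]$ is false, so $\varphi\circ g_{0}$ need not be defined on all of $\Omega$. Your argument for it rests on ``$g_{v}^{s}<0$ somewhere is impossible for a symmetrized function,'' but a symmetrized function is only guaranteed nonnegative on $\Omega\cap\R$; at non-real points it is merely slice preserving and can perfectly well take negative real values. A concrete counterexample satisfying every hypothesis of the theorem: take $\Omega=\HH$ and $g=\exp_{*}(qi)=\cos(q)+\sin(q)i$. Then $g^{s}\equiv1$, $g_{v}=\sin(q)i$ has only real zeroes, yet $g_{0}(\pi+It)=\cos(\pi+It)=-\cosh(t)\in(-\infty,-1]$ for every $t\neq0$ and $I\in\SF$ (consistently, $g_{v}^{s}=\sin^{2}(q)=-\sinh^{2}(t)<0$ there). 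So your candidate $\beta=\alpha/(\nu\circ\varphi\circ g_{0})$ only exists on the connected components of $\Omega\setminus g_{0}^{-1}((-\infty,-1])$ --- which is precisely the content and the proof of the paper's Theorem~\ref{local2}, not of Theorem~\ref{gvhasnozeroes}. Your closing discussion of spherical zeroes does not repair this: the problem occurs at points where $g_{v}^{s}$ is negative and nonzero, not at zeroes of $g_{v}$.

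The paper's actual proof avoids $\varphi$ entirely and is global. Since $W$ is never vanishing and $\Omega$ is slice contractible, $W^{s}$ has a square root $\tau\in\So_{\R}(\Omega)$; multiplying the second equation of system~\eqref{system2} by $\tau$ and using $\mu(t^{2})=\cos(t)$, $\nu(t^{2})t=\sin(t)$ turns the system into $\cos(\tau\beta)=g_{0}$, $\sin(\tau\beta)=\alpha\tau$, with $g_{0}^{2}+(\alpha\tau)^{2}=g_{0}^{2}+g_{v}^{s}\equiv1$. Proposition~\ref{cos-sin} (which reduces to a holomorphic logarithm of the never-vanishing one-slice-preserving function $g_{0}+\alpha\tau I$ on a simply connected domain, and hence is insensitive to where $g_{0}$ lands) produces $\gamma$ with $\cos(\gamma)=g_{0}$, $\sin(\gamma)=\alpha\tau$, and one sets $\beta=\gamma/\tau$. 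If you want to salvage your route, this is the substitution to make: replace the local inverse $\varphi$ of $\mu$ by the global cosine--sine problem.
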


\begin{proof}
By Remark~\ref{r1} we can limit ourselves to the case $g^{s}\equiv 1$.
As $W$ is never vanishing, then~\cite[Corollary 3.2]{A-dF} guarantees the existence of a square root $\tau\in\So_{\R}(\Omega)$
of $W^{s}$ and system~\eqref{system2} becomes 
\begin{equation*}
\begin{cases}
\mu(\tau^{2}\beta^{2})=g_{0},\\
\nu(\tau^{2}\beta^{2})\beta=\alpha,
\end{cases}
\end{equation*}
or, equivalently
\begin{equation}\label{system3}
\begin{cases}
\mu(\tau^{2}\beta^{2})=g_{0},\\
\nu(\tau^{2}\beta^{2})\beta\tau=\alpha\tau.
\end{cases}
\end{equation}
Using the relation between the power series of $\mu$ and $\cos$ and $\nu$ and $\sin$, last system can be written as
\begin{equation}\label{system4}
\begin{cases}
\cos(\tau\beta)=g_{0},\\
\sin(\tau\beta)=\alpha\tau.
\end{cases}
\end{equation}
Since $g_{0}^{2}+\alpha^{2}\tau^{2}=g_{0}^{2}+\alpha^{2}W^{s}=g_{0}^{2}+g_{v}^{s}=1$, by Proposition~\ref{cos-sin} there exists $\gamma\in\So_{\R}(\Omega)$ which solves
\begin{equation}\label{system5}
\begin{cases}
\cos(\gamma)=g_{0},\\
\sin(\gamma)=\alpha\tau.
\end{cases}
\end{equation}
Now, setting $\beta=\gamma/\tau$, 
where the second term is well defined since $\So_{\R}(\Omega)$ is abelian and
$\tau$ is never vanishing, we have that $\beta$ is a solution of system~\eqref{system2} and hence, thanks to
Lemma~\ref{solution},
$f=f_{v}=\beta W$ is a $*$-logarithm of $g$. 
%
%
\end{proof}

Theorem~\ref{gvhasnozeroes} allows us to give a first local existence result for the $*$-logarithm.

\begin{corollary}\label{corlocal}
Let $g\in\So(\Omega)$ and $q_{0}\in\Omega$ be such that $g_{|\SF_{q_{0}}}$
is never vanishing and $g_{v}$ has no non-real isolated zeros on $\SF_{q_{0}}$.
Then there exists a circular slice-contractible neighborhood $\Omega_{0}$ of $\SF_{q_{0}}$
such that $g_{|\Omega_{0}}$ admits a $*$-logarithm.
\end{corollary}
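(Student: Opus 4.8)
The plan is to reduce the statement to Theorem~\ref{gvhasnozeroes} by exhibiting a circular, slice-contractible neighbourhood $\Omega_{0}$ of $\SF_{q_{0}}$ on which $g$ is never vanishing and $g_{v}$ has no non-real isolated zeroes; once such an $\Omega_{0}$ is produced, $g_{|\Omega_{0}}\in\So^{*}(\Omega_{0})$ fulfils the hypotheses of Theorem~\ref{gvhasnozeroes} and the desired $*$-logarithm follows. That $g$ is never vanishing on a whole neighbourhood of $\SF_{q_{0}}$ is immediate: $g$ is continuous and $\SF_{q_{0}}$ is compact with $g\neq0$ on it, so $g$ stays away from $0$ on a circular open neighbourhood $V$ of $\SF_{q_{0}}$.

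The heart of the matter is to control the non-real isolated zeroes of $g_{v}$. If $g_{v}\equiv0$ or $g_{v}$ is a zero divisor (the situations covered by Corollary~\ref{cor1} and Proposition~\ref{zero-divisor}; recall also Remark~\ref{oneslice}), then $g_{v}$ has no non-real isolated zeroes at all and there is nothing more to do here. Otherwise $g_{v}$ is not a zero divisor, and I would argue that the set $Z$ of non-real isolated zeroes of $g_{v}$ has no accumulation point in $\Omega$. Indeed, if $q$ is a non-real isolated zero of $g_{v}$ then $g_{v}^{s}\equiv0$ on $\SF_{q}$; since $g_{v}^{s}\in\So_{\R}(\Omega)$ is constant on spheres and its restriction to each slice $\C_{I}$ is holomorphic, an accumulating sequence of such spheres would force $g_{v}^{s}$ to vanish on a subset of some $\Omega_{I}$ with an accumulation point in $\Omega_{I}$, hence $g_{v}^{s}\equiv0$, contradicting that $g_{v}$ is not a zero divisor. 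Consequently the circularization $\widetilde{Z}=\bigcup_{q\in Z}\SF_{q}$ is closed in $\Omega$, and it is disjoint from $\SF_{q_{0}}$: if some $q\in Z$ satisfied $\SF_{q}=\SF_{q_{0}}$, then $q$ would be a non-real isolated zero of $g_{v}$ lying on $\SF_{q_{0}}$, against the hypothesis. Since $\SF_{q_{0}}$ is compact and disjoint from the closed set $\widetilde{Z}$, there is a circular open neighbourhood $V'$ of $\SF_{q_{0}}$ with $V'\cap\widetilde{Z}=\emptyset$; on $V'$ the function $g_{v}$ has no non-real isolated zeroes.

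It remains to shrink $V\cap V'$ to a slice-contractible circular domain. If $q_{0}\in\R$, a sufficiently small ball $\Omega_{0}$ centred at $q_{0}$ works, since it is a slice domain whose slices are discs, hence simply connected. If $q_{0}\notin\R$, fix $I\in\SF$, let $p_{I}$ be the point of $\SF_{q_{0}}\cap\C_{I}^{+}$, and take $\Omega_{0}$ to be the circularization of a Euclidean disc $D\subset\C_{I}^{+}$ centred at $p_{I}$ of radius smaller than $||(q_{0})_{v}||$; then $\Omega_{0}$ is a product domain with $(\Omega_{0})_{I}^{+}=D$ convex, hence simply connected, so $\Omega_{0}$ is slice-contractible. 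Choosing the ball, respectively the disc, small enough we may assume $\Omega_{0}\subset V\cap V'$, so that $g$ is never vanishing and $g_{v}$ has no non-real isolated zeroes on $\Omega_{0}$. Applying Theorem~\ref{gvhasnozeroes} to $g_{|\Omega_{0}}$ then yields the $*$-logarithm.

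The only genuinely delicate point I expect is the non-accumulation of $Z$ in $\Omega$, and hence the closedness of $\widetilde{Z}$; this is where one really uses the zero-set structure of slice regular functions through the symmetrized function $g_{v}^{s}$. Everything else is a routine compactness argument together with the elementary fact that small balls and small solid tori are slice contractible.
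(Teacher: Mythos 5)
Your proof is correct and follows essentially the same route as the paper: reduce to Theorem~\ref{gvhasnozeroes} on a small circular slice-contractible neighbourhood of $\SF_{q_{0}}$, which exists because the spheres where $g$ vanishes or where $g_{v}$ has non-real isolated zeroes form a discrete family not containing $\SF_{q_{0}}$. You merely spell out in detail (via $g_{v}^{s}$ and the Identity Principle) the discreteness claim that the paper states in one line.
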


\begin{proof}
Theorem~\ref{gvhasnozeroes} guarantees the existence of a 
$*$-logarithm of $g$ on any circular neighborhood $\Omega_{0}$ of $q_{0}$ such that $\Omega_{0}$ is slice contractible, provided $g_{v}$ has no non-real isolated zeroes on $\Omega_{0}$. 
Such $\Omega_{0}$ exists because the sets of spheres where $g$ vanishes
and those where $g_{v}$ has a non-real isolated zero are discrete
and do not contain $\SF_{q_{0}}$.
\end{proof}

%

When $\Omega$ is a slice domain, the previous local existence result can be
improved to a suitable slice subdomain. 

\begin{proposition}\label{local1} 
Let $\Omega$ be a slice domain, $g\in\So^{*}(\Omega)$ and let $q_{0}\in\Omega\setminus\R$ such that the sphere $\SF_{q_{0}}$ does not contain any 
non-real isolated zero of $g_{v}$. Then
there exists a slice neighborhood $\Omega_{0}$ of $q_{0}$ which is slice contractible where $g$ has a $*$-logarithm, i.e., there exists $f\in\So(\Omega_{0})$
such that $\exp_{*}(f)=g|_{\Omega_{0}}$.
\end{proposition}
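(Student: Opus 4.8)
The plan is to reduce to a slice-contractible slice subdomain and then invoke Theorem~\ref{gvhasnozeroes}. First, since $\SF_{q_0}$ contains no non-real isolated zero of $g_v$ and $g$ is never vanishing on the compact sphere $\SF_{q_0}$, both the set of spheres where $g$ vanishes and the set of spheres where $g_v$ has a non-real isolated zero are discrete and miss $\SF_{q_0}$; hence there is a circular neighbourhood on which $g$ is never vanishing and $g_v$ has no non-real isolated zeroes. The point, however, is that we want a \emph{slice} neighbourhood of $q_0$, i.e.\ an open circular set of the form $\Omega_0$ with $\Omega_0\cap\R\ne\emptyset$ and $(\Omega_0)_I$ simply connected, so that Theorem~\ref{gvhasnozeroes} applies with $\Omega_0$ slice contractible.

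Fix $I\in\SF$ with $q_0\in\C_I$ (after a rotation we may assume $q_0=\alpha_0+I\beta_0$ with $\beta_0>0$). In the slice $\C_I$, the restriction $g_I$ is holomorphic and never vanishing near $q_0$, and the finitely many "bad" spheres near $\SF_{q_0}$ meet $\C_I$ in a discrete set avoiding $q_0$. Choose a small open disc $U\subset\C_I$ centred at $q_0$ avoiding those points and avoiding $\R$. Now enlarge $U$ to a simply connected open subset $V$ of $\C_I$ that also contains a real interval: concretely, pick a simply connected domain $V\subset\C_I$ which is symmetric under conjugation $z\mapsto\bar z$, contains $q_0$ (hence also $\bar q_0$), contains a small segment of $\R$, avoids all the isolated bad points and all zeros of $g^s$, and such that $V$ retains simple connectivity — e.g.\ take $V$ to be a thin conjugation-symmetric "dumbbell" or a suitably chosen simply connected region; this is possible precisely because the obstructions form a discrete set. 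Then set $\Omega_0:=\bigcup_{q\in V}\SF_q$, the circularization of $V$. By construction $\Omega_0$ is an open circular domain, $\Omega_0\cap\R\ne\emptyset$ so $\Omega_0$ is slice, $(\Omega_0)_I=V$ is simply connected so $\Omega_0$ is slice contractible, $\SF_{q_0}\subset\Omega_0$, $g$ is never vanishing on $\Omega_0$, and $g_v$ has no non-real isolated zeroes on $\Omega_0$ (the only isolated zeroes of $g_v$ in $\Omega_0$ would be real or spherical, having been excluded). Applying Theorem~\ref{gvhasnozeroes} to $g|_{\Omega_0}$ yields $f\in\So(\Omega_0)$ with $\exp_*(f)=g|_{\Omega_0}$, which is the assertion.

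The main obstacle is the topological bookkeeping in the construction of $V$: one must simultaneously (i) keep $V$ simply connected, (ii) make it conjugation-symmetric so that $\Omega_0$ is genuinely circular and $g_v$'s real/spherical zero structure is the only possible isolated-zero behaviour, (iii) have $V$ meet $\R$, and (iv) avoid the discrete obstruction set (zeros of $g$, non-real isolated zeros of $g_v$). Since the obstruction set is discrete and $q_0\notin\R$, there is plenty of room: one can first take a small conjugation-symmetric simply connected neighbourhood of the \emph{pair} $\{q_0,\bar q_0\}$ disjoint from $\R$ and from the obstructions, then connect it to a short interval of $\R$ by a thin conjugation-symmetric corridor chosen to avoid the finitely many nearby obstruction points, the union remaining simply connected. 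Everything else is a direct appeal to the cited theorem, so no further computation is needed.
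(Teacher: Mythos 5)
Your proof is correct and follows essentially the same strategy as the paper: build a conjugation-symmetric, simply connected subdomain of the slice $\C_{q_0}$ containing $q_0$ and a real interval while avoiding the discrete trace of the spheres carrying non-real isolated zeroes of $g_v$, circularize it, and apply Theorem~\ref{gvhasnozeroes}. The paper reaches the same configuration slightly more circuitously (first normalizing $\Omega$ to $\HH$ or $\B$ via an approximation result so that the bad spheres become finite in number, then taking a segment from $q_0$ to the real axis and a symmetric simply connected neighbourhood of it), but the underlying topological construction is the one you describe.
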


\begin{proof}
Again, Theorem~\ref{gvhasnozeroes} yields the proof provided
we construct $\Omega_{0}$ as in thesis of the statement.

Since $\Omega$ is slice and $\Omega\cap\C_{q_{0}}$ is connected by arcs, we
can find a piecewise linear path joining $x_{0}\in\Omega\cap \R$ with
$q_{0}$ which touches the real line at $x_{0}$ only,  and is such that an $\varepsilon$-neighborhood $\mathcal{U}$ of this path in $\C_{q_{0}}$ is contractible and contained in $\Omega_{I}$ (See Figure~\ref{fig5}). By replacing $\Omega$ with the circularization of this domain we
can suppose that $\Omega$ is slice and slice contractible.

\begin{figure}[ht]
\includegraphics[width=8cm]{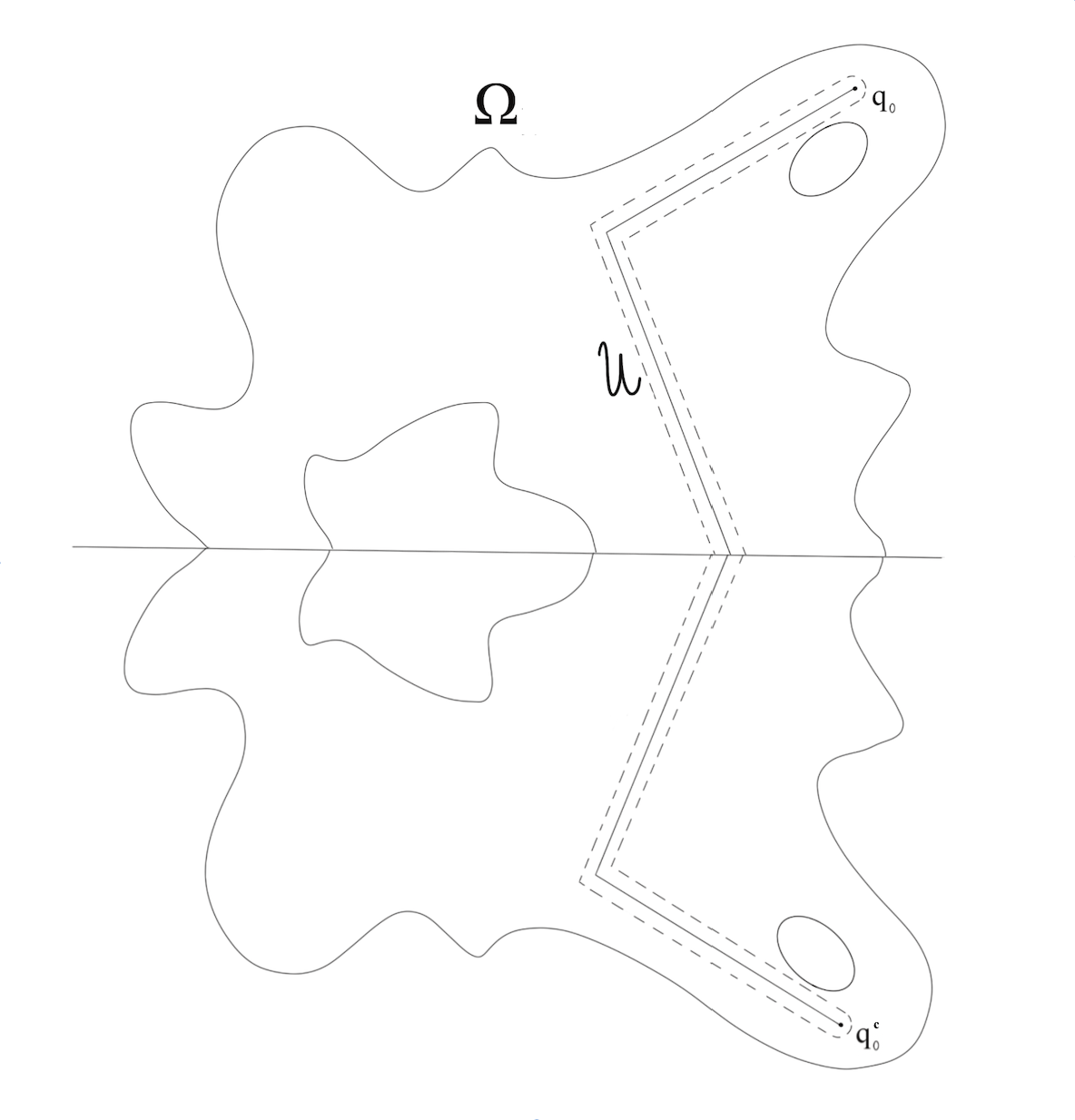}\label{fig5}
\caption{The domains involved in the first part of the proof of Proposition~\ref{local1}.}
\end{figure}

Thanks to~\cite[Corollary 3.7]{Gal-S} we can suppose that either $\Omega$ equals $\HH$ or the unitary ball $\mathbb{B}\subset \HH$ centered in $0$. 
Let $\Omega_{1}\subset\subset \Omega$ be a ball centered at the origin containing $q_{0}$. Lemma 3.11 in~\cite{G-S-St} entails that the set of non-real isolated zeroes of $g_{v}$
contained in $\Omega_{1}$ is finite. 
Let us denote by $S_{1},\dots S_{N}$ the spheres containing the non real
isolated zeroes of $g_{v}$.
Take a closed interval $\ell\subset\Omega_{1}\cap\R$ and consider the infinitely many segments joining
$q_{0}$ to the points of $\ell$. 
As $\mathcal{F}:=(S_{1}\cup\dots\cup S_{N})\cap\C_{q_{0}}$ is finite we can find a segment
$M\subset \Omega_{1}\cap\C_{q_{0}}$ joining $q_{0}$ to a point in $\ell$
which does not intersect $\mathcal{F}$. As $\mathcal{F}$ is symmetric with respect to conjugation in $\C_{q_{0}}$ and $M$ is compact, 
we can find a simply connected neighborhood $\mathcal{V}$ of $M$ in $\C_{q_{0}}$ symmetric
with respect to conjugation in $\C_{q_{0}}$ wich does not intersect $\mathcal{F}$.
Then, the circularization of $\mathcal{V}$ is the required $\Omega_{0}$.
\end{proof}


We now continue our investigation in search of a $*$-logarithm of $g$
in $\mathcal{R}^{1}(\Omega)$.
By Proposition~\ref{propos1}, up to a change of sign of $g$ if $\Omega$
is a product domain, we can limit ourselves to look for solutions of 
$\exp_{*}(f_{v})=g$, with the necessary condition that $g_{0}(q_{0})=1$
for any $q_{0}$ that is a non-real isolated zero of $g_{v}$.

Before stating the theorem we notice that for any $g\in\So^{*}(\Omega)$, the set $g_{0}^{-1}((-\infty,-1])$ is a circular set
because it is a union of pre-images of real points by the slice preserving function $g_{0}$.

\begin{theorem}\label{local2} 
Let $\Omega$ be slice contractible and $g\in\So^{1}(\Omega)$ be
such that for any $q_{0}\in\Omega$ that is a non-real isolated zero of $g_{v}$
we have $g_{0}(q_{0})=1$. Then,
on every connected component of $\Omega\setminus g_{0}^{-1}((-\infty,-1])$,
there exists a $*$-logarithm of $g$.
%
\end{theorem}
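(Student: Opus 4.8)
The plan is to write the $*$-logarithm down explicitly on a fixed component by means of the biregular inverse $\varphi$ of $\mu|_{\mathcal{D}_{0}}$ from Definition~\ref{varphi}. Let $\mathcal U$ be a connected component of $\Omega\setminus g_0^{-1}((-\infty,-1])$. Since $g_0$ is slice preserving, $g_0^{-1}((-\infty,-1])$ is circular and closed in $\Omega$, so $\mathcal U$ is a circular domain on which $g_0$ takes values in $\HH\setminus(-\infty,-1]$. Hence the composition $\Phi:=\varphi\circ(g_0|_{\mathcal U})$ is defined, and the first step would be to check that $\Phi$ is a slice preserving regular function on $\mathcal U$ with $\Phi(\mathcal U)\subseteq\mathcal{D}_0$ and that $\mu(\Phi)\equiv g_0$ on $\mathcal U$; the latter is immediate from $\mu\circ\varphi=\mathrm{id}_{\HH\setminus(-\infty,-1]}$ once slice regularity of the composition is granted. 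By Remark~\ref{zero} the function $\nu$ has no zero on $\mathcal{D}_0$, so $\nu(\Phi)=\nu\circ\Phi$ is a never vanishing slice preserving regular function on $\mathcal U$, and $\nu(\Phi)^{-1}\in\So_{\R}(\mathcal U)$.

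Next I would set
$$
f:=\nu(\Phi)^{-1}g_v\in\So(\mathcal U)
$$
and claim $\exp_*(f)=g$ on $\mathcal U$. Since $\nu(\Phi)^{-1}$ is slice preserving and $g_v^c=-g_v$, one gets $f^c=-f$, so $f_0\equiv 0$ and $f=f_v$; a direct computation with this slice preserving factor then gives $f^s=\nu(\Phi)^{-2}g_v^s$. Now I combine two identities valid on $\mathcal U$: from $g\in\So^{1}(\Omega)$ and Proposition~\ref{product-properties} we have $g_0^2+g_v^s\equiv 1$, while evaluating~\eqref{sommaquadrati} at $q=\Phi$ and using $\mu(\Phi)\equiv g_0$ gives $g_0^2+\nu(\Phi)^2\Phi\equiv 1$. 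Subtracting, $\nu(\Phi)^2\Phi\equiv g_v^s$, whence $f^s=\nu(\Phi)^{-2}g_v^s=\Phi$. Finally, formula~\eqref{munu} applied to $f=f_v$ yields
$$
\exp_*(f)=\mu(f^s)+\nu(f^s)f=\mu(\Phi)+\nu(\Phi)\nu(\Phi)^{-1}g_v=g_0+g_v=g,
$$
which is the desired conclusion. (Note that this construction works uniformly, without the preliminary reductions to $g^s\equiv 1$ and to $g_v$ neither zero nor a zero divisor.)

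I expect the only delicate point to be the assertion that $\Phi=\varphi\circ g_0$ is slice regular on $\mathcal U$: this is exactly where the fact that both $\varphi$ and $g_0$ are slice preserving enters, since then on each slice $\C_I$ the restriction of $\Phi$ is the composition of the holomorphic maps $(g_0)_I$ and $\varphi_I$, and this family glues, through the Representation Formula, to a slice preserving regular function; everything else reduces to the functional identities between $\mu$, $\nu$ and $\varphi$ established in Section~\ref{mu}. I would also observe that the hypothesis $g_0(q_0)=1$ at the non-real isolated zeros of $g_v$ is not strictly needed to build $f$ on a single component: if $q_0\in\mathcal U$ is such a zero then $g_v^s\equiv 0$ on $\SF_{q_0}$ forces $g_0\equiv\pm 1$ there, and $g_0\equiv -1$ is excluded because $\SF_{q_0}\subseteq\mathcal U$. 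Rather, that hypothesis guarantees that \emph{every} non-real isolated zero of $g_v$ in $\Omega$ lies in some component $\mathcal U$, which is what will make these local patches useful for the gluing argument of the main theorem.
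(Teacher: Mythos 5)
Your proof is correct, and the function you construct is in fact the very same one the paper produces: with the paper's factorization $g_v=\alpha W$ of~\eqref{gvalphaW} one has $\beta W=\alpha W/(\nu\circ\varphi\circ g_0)=g_v/(\nu\circ\varphi\circ g_0)$, which is exactly your $f=\nu(\Phi)^{-1}g_v$. The difference lies in how the verification is organized, and yours is genuinely leaner at two points. First, the paper reduces beforehand (Assumption~\ref{assumptionzerodiv}) to the case where $g_v$ is not a zero divisor, factors $g_v=\alpha W$ with $W$ free of real and spherical zeroes, and routes everything through system~\eqref{system2} and Lemma~\ref{solution}; you bypass all of this by working with $g_v$ directly, which also lets the same formula absorb the zero-divisor case (handled separately in Proposition~\ref{zero-divisor}) at no extra cost. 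Second, the paper proves $\beta^2W^s=\varphi\circ g_0$ by a case distinction on whether $(\varphi\circ g_0)(q)$ vanishes, because its chain of equalities multiplies and divides by $\varphi(g_0(q))$; your derivation of $\nu(\Phi)^2\Phi\equiv g_v^s$ by subtracting $g_0^2+g_v^s\equiv1$ from the composed identity~\eqref{sommaquadrati} avoids any division and hence the case split. Your two side remarks are also accurate: the only point requiring care is the slice regularity of $\varphi\circ g_0$, which holds exactly as you say because both factors are slice preserving; and the hypothesis $g_0(q_0)=1$ at the non-real isolated zeroes of $g_v$ is indeed automatic for those zeroes lying in $\mathcal{U}$ (since $g_v^s\equiv0$ on $\SF_{q_0}$ forces $g_0\equiv\pm1$ there, and the value $-1$ would put the whole sphere inside $g_0^{-1}((-\infty,-1])$), its real role being to guarantee that no such zero is swallowed by $g_0^{-1}((-\infty,-1])$, which is what the gluing in Theorem~\ref{final-result} requires.
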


\begin{proof}
Let us denote by $\mathcal{U}$ a connected component of $\Omega\setminus g_{0}^{-1}((-\infty,-1])$. Notice that, as $g_{0}$ is slice preserving, then $\mathcal{U}$ is a circular domain. We claim that $\mathcal{U}$ is a domain where equation~\eqref{exp} admits a solution.
%
%
%
%

Let us write $g_{v}=\alpha W$ on $\Omega$ as in formula~\eqref{alphaW}.
Our choice of  $\mathcal{U}$ entails that $g_{0}(\mathcal{U})\subset \HH\setminus (-\infty,-1]$. 
Since the function $\varphi$ given in Definition~\ref{varphi} and $g_{0}$ are slice preserving,
then $\varphi\circ g_{0}:\mathcal{U}\to\mathcal{D}_{0}$ is a well defined slice preserving function. 
Thanks to Remark~\ref{zero}, the function $\nu\circ\varphi\circ g_{0}$ is a never vanishing slice preserving regular function on $\mathcal{U}$. Now set 
\begin{equation}\label{formulabeta}
\beta=\frac{\alpha}{\nu\circ\varphi\circ g_{0}}.
\end{equation}
We claim that $\beta$ is a solution of system~\eqref{system2} on $\mathcal{U}$.

First of all recall that $\mu\circ\varphi=\mbox{id}|_{\HH\setminus(-\infty,-1]}$. Thanks to this relation, the first equality in system~\eqref{system2} 
is satisfied if $\varphi\circ g_{0}=\beta^{2}W^{s}$. 
By squaring Equality~\eqref{formulabeta} we have 
\begin{equation}\label{formulabeta2}
\beta^{2}W^{s}=\frac{\alpha^{2}W^{s}}{(\nu\circ\varphi\circ g_{0})^{2}}.
\end{equation}
If $q\in\mathcal{U}$ is such that $(\varphi\circ g_{0})(q)=0$, then $g_{0}(q)=1$ (see Remark~\ref{zero});
as $g^{s}\equiv 1$ we then have $\alpha^{2}(q) W^{s}(q)=0$. Formula~\eqref{formulabeta2} implies 
$\beta^{2}(q)W^{s}(q)=0=(\varphi\circ g_{0})(q)$.
Suppose now that $q\in\mathcal{U}$ is such that $(\varphi\circ g_{0})(q)\neq0$. Then the following chain of equalities is
due to the fact that $g^{s}\equiv 1$, to Formula~\eqref{sommaquadrati} and to the fact that $\mu\circ\varphi=\mbox{id}|_{\HH\setminus(-\infty,-1]}$:
\begin{align*}
\beta^{2}(q)W^{s}(q)=\frac{\alpha^{2}(q)W^{s}(q)}{(\nu(\varphi(g_{0}(q)))^{2}}&=\frac{1-g_{0}^{2}(q)}{(\nu(\varphi(g_{0}(q))))^{2}}=\frac{(1-g_{0}^{2}(q))\cdot\varphi(g_{0}(q))}{(\nu(\varphi(g_{0}(q))))^{2}\cdot\varphi(g_{0}(q))}\\
&=\frac{(1-g_{0}^{2}(q))\cdot\varphi(g_{0}(q))}{1-(\mu(\varphi(g_{0}(q))))^{2}}=\frac{(1-g_{0}^{2}(q))\cdot\varphi(g_{0}(q))}{1-g_{0}^{2}(q)}=\varphi(g_{0}(q)).
\end{align*}
Now, since $\beta^{2}W^{s}=\varphi\circ g_{0}$, Equality~\eqref{formulabeta} immediately gives
$$
\nu(\beta^{2}W^{s})\cdot\beta=\alpha,
$$
which is the second equation of system~\eqref{system2}. Finally, 
thanks to Lemma~\ref{solution}, 
the assertion follows by setting $f=f_{v}=\beta\cdot W|_{\mathcal{U}}$.
\end{proof}

%

\begin{corollary}
Let $\Omega$ be a slice contractible product domain and $g\in\So^{1}(\Omega)$. Assume that for any $q_{0}\in\Omega$ that is a non-real isolated zero of $g_{v}$
we have that $g_{0}(q_{0})=-1$. Then,
on every connected component of $\Omega\setminus g_{0}^{-1}([1,+\infty))$,
there exists a $*$-logarithm of $g$.
 \end{corollary}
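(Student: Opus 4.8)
The plan is to deduce the statement from Theorem~\ref{local2} by passing to $-g$, exactly as the shifts by $\pi\mathcal J$ are used in Propositions~\ref{one-slice-case} and~\ref{zero-divisor}.

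First I would set $\tilde g:=-g$ and check that $\tilde g$ verifies the hypotheses of Theorem~\ref{local2}. Since $-1$ is a real constant, $\tilde g=(-1)g$ is slice regular; as $(-g)^{c}=-g^{c}$ we get $\tilde g^{s}=(-g)*(-g^{c})=g*g^{c}=g^{s}\equiv 1$, so $\tilde g\in\So^{1}(\Omega)$. Moreover $\tilde g_{0}=-g_{0}$ and $\tilde g_{v}=-g_{v}$, so $\tilde g_{v}$ has exactly the same zero set as $g_{v}$; in particular its non-real isolated zeroes are the same $q_{0}$, and there $\tilde g_{0}(q_{0})=-g_{0}(q_{0})=-(-1)=1$. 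Thus Theorem~\ref{local2}, applied to $\tilde g$ on the slice-contractible domain $\Omega$, produces a $*$-logarithm of $\tilde g$ on every connected component of $\Omega\setminus\tilde g_{0}^{-1}((-\infty,-1])$.

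The next step is a purely set-theoretic identification: since $\tilde g_{0}=-g_{0}$, one has $\tilde g_{0}^{-1}((-\infty,-1])=g_{0}^{-1}([1,+\infty))$, so the components just obtained are exactly the connected components $\mathcal U$ of $\Omega\setminus g_{0}^{-1}([1,+\infty))$ in the statement. As remarked before Theorem~\ref{local2}, $g_{0}^{-1}([1,+\infty))$ is circular (a union of pre-images of real points under the slice preserving function $g_{0}$), hence each $\mathcal U$ is circular; and since $\Omega\cap\R=\emptyset$ we also have $\mathcal U\cap\R=\emptyset$, so $\mathcal J$ is regular on $\mathcal U$.

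Finally I would transfer the logarithm back to $g$. Fix a component $\mathcal U$ and let $\tilde f\in\So(\mathcal U)$ satisfy $\exp_{*}(\tilde f)=\tilde g|_{\mathcal U}=-g|_{\mathcal U}$. Since $\pi\mathcal J$ is slice preserving it $*$-commutes with $\tilde f$, so by the additivity of $\exp_{*}$ on commuting arguments together with $\exp_{*}(\pi\mathcal J)=-1$ (Example~\ref{exppiJ=-1}),
\[
\exp_{*}(\tilde f+\pi\mathcal J)=\exp_{*}(\tilde f)*\exp_{*}(\pi\mathcal J)=-\exp_{*}(\tilde f)=-\tilde g|_{\mathcal U}=g|_{\mathcal U},
\]
so $\tilde f+\pi\mathcal J$ is the desired $*$-logarithm of $g$ on $\mathcal U$. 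I do not expect a genuine obstacle here: the only analytic content — building the logarithm away from the exceptional set by means of the biregular inverse $\varphi$ of $\mu|_{\mathcal D_{0}}$ — is entirely contained in Theorem~\ref{local2}, and what remains is the sign bookkeeping made possible by the product structure of $\Omega$, which is precisely what makes $\mathcal J$, hence the identity $-1=\exp_{*}(\pi\mathcal J)$, available on $\mathcal U$.
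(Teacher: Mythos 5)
Your proposal is correct and follows essentially the same route as the paper: pass to $\tilde g=-g=\exp_{*}(\pi\mathcal J)g$, apply Theorem~\ref{local2}, and shift the resulting $*$-logarithm by $\pi\mathcal J$. Your version merely spells out the sign bookkeeping and the identification $\tilde g_{0}^{-1}((-\infty,-1])=g_{0}^{-1}([1,+\infty))$ that the paper leaves as a ``trivial computation.''
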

 
 \begin{proof}
 Set $\tilde g= \exp_{*}(\pi\mathcal{J})g=-g$. Then $\tilde{g}$ satisfies the hypotheses of
 Theorem~\ref{local2} and, therefore, there exists $f$ such that $\exp_{*}(f)=\tilde{g}$.
 A trivial computation gives $\exp_{*}(\pi\mathcal{J}+f)=g$.
 \end{proof}

It is worth observing that the difficulty of the proof of Proposition~\ref{local1} is of a purely topological nature, since the 
existence of a circular neighborhood of $q_{0}$ where $W^{s}$ is never vanishing is trivial, but the key point  
is that we are looking for a \textit{slice} circular neighborhood of $q_{0}$ whose
intersection with any slice is simply connected. On the contrary, the proof of Theorem~\ref{local2} has to overcome a problem
of analytical nature: indeed the existence of a circular neighborhood $\mathcal{U}$ of~$q_{0}$ 
such that $g_{0}((-\infty,-1])\cap\mathcal{U}=\emptyset$ immediately follows by the continuity of the function $g_{0}$,
while the construction of the function that gives the logarithm of $g$ on $\mathcal{U}$ requires the sharp analytical 
properties of the function $\mu$
obtained in Section~\ref{mu}.

%
%
%
%
%
In particular we are able to overcome this double kind of difficulties when
suitable topological hypothesis  allow us
to succeed in glueing three different solutions: one which is defined near
the non-real isolated zeroes of $g_{v}$ and two which are given on suitable slice-contractible domains  which do not contain the non-real isolated zero.

The idea of the proof is to solve (uniquely if the domain is slice) near the ``bad points'' (i.e., the non-real isolated zeroes of $g_{v}$) and to use this solution to select two suitable solutions in two appropriate (i.e., slice contractible),
domains whose union is exactly given by $\Omega$ minus the spheres containing
the bad points. In order this kind of reasoning works, the key problems shows two aspects. First of all,
the non-real isolated zeroes of $g_{v}$ could belong to different connected components of $\Omega\setminus g_{0}^{-1}((-\infty,-1])$ and thus 
we could not be sure that the leaves we selected around a point 
agree also around a different zero of $g_{v}$.
Secondly, even if all the non-real isolated zeroes of $g_{v}$ belong to
the same connected component $\mathcal{U}$ of $\Omega\setminus g_{0}^{-1}((-\infty,-1])$,
we have no information on the topology of $\mathcal{U}$ itself.
So the construction of the two slice simply connected domains whose union
is $\Omega$ minus the spheres where $g_{v}$ has non-real isolated zeroes, could give
a domain which does not allow to apply analytic continuation around each
of such zeroes.

The following statement describes a situation in which the existence of a $*$-logarithm holds. Recall the definition of $\mathbb{D}$ as $\mathbb{D}:=\{z\in\C_{i}\,|\,|z-2i|<1\}\times \SF$.

\begin{theorem}\label{final-result}
Let $\Omega$ be one among $\mathbb{B}$, $\HH$ or $\mathbb{D}$. Let $g\in\So^{1}(\Omega)$ be such that 
\begin{itemize}
\item $g_{v}$ has
a finite number of non-real isolated zeros $\{q_{1},\dots,q_{N}\}$;
\item $g_{0}(q_{\ell})=1$ for all $\ell=1,\dots, N$;
\item the union $\SF_{q_{1}}\cup\dots\cup\SF_{q_{N}}$ is contained in a unique connected
component $\mathcal{U}$ of $\Omega\setminus g_{0}^{-1}((-\infty,-1])$.
\end{itemize}
If for some $I\in\SF$ (and hence for any) the set $\mathcal{U}_{I}^{+}=\mathcal{U}\cap\C_{I}^{+}$ is convex and $\mathcal U$ is slice if $\Omega$ is, then there exists a slice regular $*$-logarithm of $g$. 
\end{theorem}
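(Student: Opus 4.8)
The plan is to obtain a $*$-logarithm of $g$ on all of $\Omega$ by patching three local $*$-logarithms: a ``rigid'' one on the component $\mathcal U$ — which by the last hypothesis contains every sphere $\SF_{q_{\ell}}$ — produced by Theorem~\ref{local2}, and two on slice-contractible circular domains $\Omega^{+},\Omega^{-}$ produced by Theorem~\ref{gvhasnozeroes}, chosen so that $\Omega^{+}\cup\Omega^{-}=\Omega\setminus(\SF_{q_{1}}\cup\dots\cup\SF_{q_{N}})$ and hence $\mathcal U\cup\Omega^{+}\cup\Omega^{-}=\Omega$. (If $N=0$ the claim is immediate from Theorem~\ref{gvhasnozeroes}, so we assume $N\ge 1$, whence $g_{v}$ is not a zero divisor and Assumption~\ref{assumptionzerodiv} is in force.) All the $*$-logarithms we invoke have, by construction, identically vanishing real part, and by Corollary~\ref{cor2} two such $*$-logarithms of $g$ sharing the same $*$-exponential can differ only in their vector parts, so we will only track vector parts.

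First, since $\Omega$ is slice contractible and $g_{0}(q_{\ell})=1$ at every non-real isolated zero of $g_{v}$, Theorem~\ref{local2} furnishes a $*$-logarithm $\Phi$ of $g$ on $\mathcal U$; moreover, when $\Omega$ — hence $\mathcal U$ — is slice, the uniqueness corollary to Theorem~\ref{unicity} shows that $\Phi$ is the \emph{unique} $*$-logarithm of $g$ on any slice-contractible neighbourhood of a single $\SF_{q_{\ell}}$, so $\Phi$ is rigid near the bad spheres. Next, I would use the convexity of $\mathcal U_{I}^{+}$ (and of its mirror image), in which all the points $\SF_{q_{\ell}}\cap\C_{I}$ sit, to choose two families $\mathcal A^{+}$, $\mathcal A^{-}$ of pairwise disjoint, conjugation-symmetric circular arcs, each arc joining one sphere $\SF_{q_{\ell}}$ to $\partial\Omega$, with the arcs of $\mathcal A^{+}$ and $\mathcal A^{-}$ emanating from a given sphere going in different directions; then $\Omega^{\pm}:=\Omega\setminus\mathcal A^{\pm}$ is a slice-contractible circular domain (removing disjoint boundary-reaching slits preserves simple connectivity of the slices), and $\Omega^{+}\cup\Omega^{-}=\Omega\setminus\bigcup_{\ell}\SF_{q_{\ell}}$. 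Since $g_{v}$ has no non-real isolated zero on $\Omega^{\pm}$, Theorem~\ref{gvhasnozeroes} gives $*$-logarithms $\Phi^{+},\Phi^{-}$ of $g$ on $\Omega^{+},\Omega^{-}$.

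The heart of the proof — and the step I expect to be the main obstacle — is to modify $\Phi^{+}$, $\Phi^{-}$ and $\Phi$ within their families of $*$-logarithms so that they agree on all overlaps at once. On each connected component of an overlap, two of our $*$-logarithms differ, by Theorem~\ref{unicity} and Assumption~\ref{assumptionzerodiv}, by a term $2\pi m\,H_{v}$ with $H_{v}^{s}\equiv1$ and $H_{v}=g_{v}/\sqrt{g_{v}^{s}}$ the canonical factor of Remark~\ref{remarkHV} (with, in the product case, a further ambiguity $2\pi n\,\mathcal J$ subject to $n\equiv m\pmod 2$). The overlaps $\Omega^{\pm}\cap\mathcal U$ and $\Omega^{+}\cap\Omega^{-}$ have several components, so these integers vary a priori from component to component, and one must show they can all be annulled by a single global choice. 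One first synchronizes $\Phi^{+}$ and $\Phi^{-}$ with $\Phi$ on a slice-contractible neighbourhood of one sphere $\SF_{q_{1}}$ lying in $\mathcal U$ — in the slice case the uniqueness recalled above makes this forced, and in the product case one argues slice by slice with the Representation Formula, as in the proof of Proposition~\ref{cos-sin}, to exclude an odd jump and pin $n$. One then propagates this compatibility to the neighbourhoods of the other $\SF_{q_{\ell}}$ and across the slits; this is a monodromy statement, and it is exactly here that the three hypotheses enter: all $\SF_{q_{\ell}}$ lying in the \emph{single} component $\mathcal U$ with $g_{0}=1$ there keeps the relevant data on one connected piece, while the convexity of $\mathcal U_{I}^{+}$ forces the integers attached to the various components of the overlaps to be equal, so that one adjustment kills every jump simultaneously.

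Once the three solutions are matched, setting $f:=\Phi$ on $\mathcal U$ and $f:=\Phi^{\pm}$ on $\Omega^{\pm}$ defines a slice regular function on $\Omega=\mathcal U\cup\Omega^{+}\cup\Omega^{-}$ with $\exp_{*}(f)=g$, which is the required $*$-logarithm. I expect the combinatorics of the arcs and of the components of the overlaps to be routine once convexity is granted; the genuine difficulty — and the reason the statement is restricted to $\mathbb{B}$, $\HH$ and $\mathbb{D}$ with $\mathcal U_{I}^{+}$ convex — is the triviality of that monodromy.
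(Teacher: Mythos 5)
Your architecture coincides with the paper's: one patch on $\mathcal U$ from Theorem~\ref{local2}, two patches on slit domains from Theorem~\ref{gvhasnozeroes}, and Theorem~\ref{unicity} to reconcile them up to terms $2\pi m H_v$ (plus $\pi n\mathcal J$ in the product case). But the step you yourself single out as ``the heart of the proof'' --- making the integer ambiguities on the various components of the overlaps agree --- is not actually carried out: you assert that convexity ``forces the integers attached to the various components of the overlaps to be equal'' and call it a monodromy statement, without supplying the argument. That is a genuine gap, and it sits exactly where the theorem's hypotheses do their work.

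The paper closes it by a more elementary observation than a monodromy computation. Take the slits to be straight segments (or rays, when $\Omega=\HH$) issuing outward from the points $q_\ell'=\SF_{q_\ell}\cap\C_I^+$. Convexity of $\mathcal U_I^+$ then guarantees that $\mathcal U_I^+$ minus the slits of one family is still \emph{connected}; hence $\widehat{\mathcal U}:=\mathcal U\cap\widehat\Omega$ is a single connected circular domain --- contrary to your premise that the overlaps $\Omega^{\pm}\cap\mathcal U$ have several components. On this connected overlap Theorem~\ref{unicity} produces one integer $\widehat m$ (and one $\widehat n$ in the product case) with $f_{\mathcal U}-\hat h=2\pi\widehat m\widehat H_v$ on all of $\widehat{\mathcal U}$, so the single global correction $\hat f:=\hat h+2\pi\widehat m\widehat H_v$ agrees with $f_{\mathcal U}$ on the whole of $\widehat{\mathcal U}$; likewise for $\tilde f$. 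Then $\hat f=\tilde f$ on $\widehat{\mathcal U}\cap\widetilde{\mathcal U}$, and since (again by convexity) this set meets every one of the $N+1$ connected components of $\widehat\Omega\cap\widetilde\Omega$ --- in particular each small triangle with vertex $q_\ell'$ --- the Identity Principle propagates the equality $\hat f=\tilde f$ to all of $\widehat\Omega\cap\widetilde\Omega$ and the three patches glue. Nothing has to be ``propagated from one sphere to the next'', and in the product case no parity exclusion via the Representation Formula is needed: one simply adds $\pi\widehat n\mathcal J+\pi\widehat m\widehat H_v$ as dictated by Theorem~\ref{unicity} on the connected sets $\widehat{\mathcal U}$ and $\widetilde{\mathcal U}$. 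Note also that your proposed anchoring step --- forcing $\Phi^{\pm}$ to equal $\Phi$ by uniqueness ``on a slice-contractible neighbourhood of $\SF_{q_1}$'' --- cannot be run as stated, because $\Phi^{\pm}$ is not defined on any full neighbourhood of $\SF_{q_1}$: the slit removes part of it, and the uniqueness corollary to Theorem~\ref{unicity} requires the non-real isolated zero of the vector part to lie in the common domain.
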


\begin{proof}

First of all choose any imaginary unit $I\in\SF$ and denote by $q_{\ell}'=\SF_{q_{\ell}}\cap\C_{I}^{+}$. 

Moreover choose $2N$ mutually disjoint outwarding segments (or rays in the case $\Omega=\HH$) $s_{\ell}, \sigma_{\ell},\subset \C^+_{I}\setminus \R$ starting from $q_{\ell}'$, for $\ell=1,\dots, N$ and 
such that $\Omega_{I}\setminus (s_{1}\cup \overline s_{1}\cup\dots\cup s_{N}\cup \overline s_{N})$ and $\Omega_{I}\setminus (\sigma_{1}\cup \overline \sigma_{1}\cup\dots\cup \sigma_{N}\cup \overline \sigma_{N})$ are contractible if $\Omega$ is slice and have two contractible connected components if $\Omega$ is product. We denote by $\widehat\Omega$ and
$\widetilde\Omega$ the circularizations of $\Omega_{I}\setminus (s_{1}\cup \overline s_{1}\cup\dots\cup s_{N}\cup \overline s_{N})$ and $\Omega_{I}\setminus (\sigma_{1}\cup \overline \sigma_{1}\cup\dots\cup \sigma_{N}\cup \overline \sigma_{N})$, respectively.
In other words we have the following equalities 
\begin{align*}
\widehat\Omega&=\Omega\setminus((s_{1}\cup\dots\cup s_{N})\times \SF),\\
\widetilde\Omega&=\Omega\setminus((\sigma_{1}\cup\dots\cup \sigma_{N})\times \SF).
\end{align*}
By Theorem~\ref{local2} we can find a $*$-logarithm $f_{\mathcal{U}}$ of $g$ on $\mathcal{U}$, while by Theorem~\ref{gvhasnozeroes} we can find $\widehat h\in\So(\widehat \Omega)$
and $\widetilde h\in\So(\widetilde\Omega)$ which are $*$-logarithms of $g$ on $\widehat \Omega$ and $\widetilde \Omega$, respectively.


As $\mathcal{U}_{I}^{+}$ is convex, then both
$\mathcal{U}_{I}^{+}\setminus (s_{1}\cup\dots\cup s_{N}))$ and $\mathcal{U}_{I}^{+}\setminus (\sigma_{1}\cup\dots\cup \sigma_{N})$ are connected; we will denote them by $\widehat{\mathcal U}_I^+$ and $\widetilde {\mathcal U}_I^+$ respectively.
Thus, also their circularizations $\mathcal U\cap \widehat \Omega$ and $\mathcal U\cap \widetilde \Omega$ 
are connected and will be denoted by $\widehat{\mathcal U}$ and $\widetilde{\mathcal U}$.
Moreover, $(\widehat\Omega\cap \widetilde\Omega)\cap \C_{I}^{+}=\Omega_{I}^{+}\setminus (s_{1}\cup\dots\cup s_{N}\cup\sigma_{1}\cup\dots\cup \sigma_{N})$ is the union of a $N+1$ connected components which are given by $N$ ``triangles'' $T_\ell$ with vertex in $q'_\ell$ and whose boundary in $\Omega_I^{+}$ is given by $s_\ell\cup \sigma_\ell$
and a connected component which is the complement of these triangles and will be denoted by $\Omega_I^0$.

\begin{figure}[ht]
\includegraphics[width=10cm]{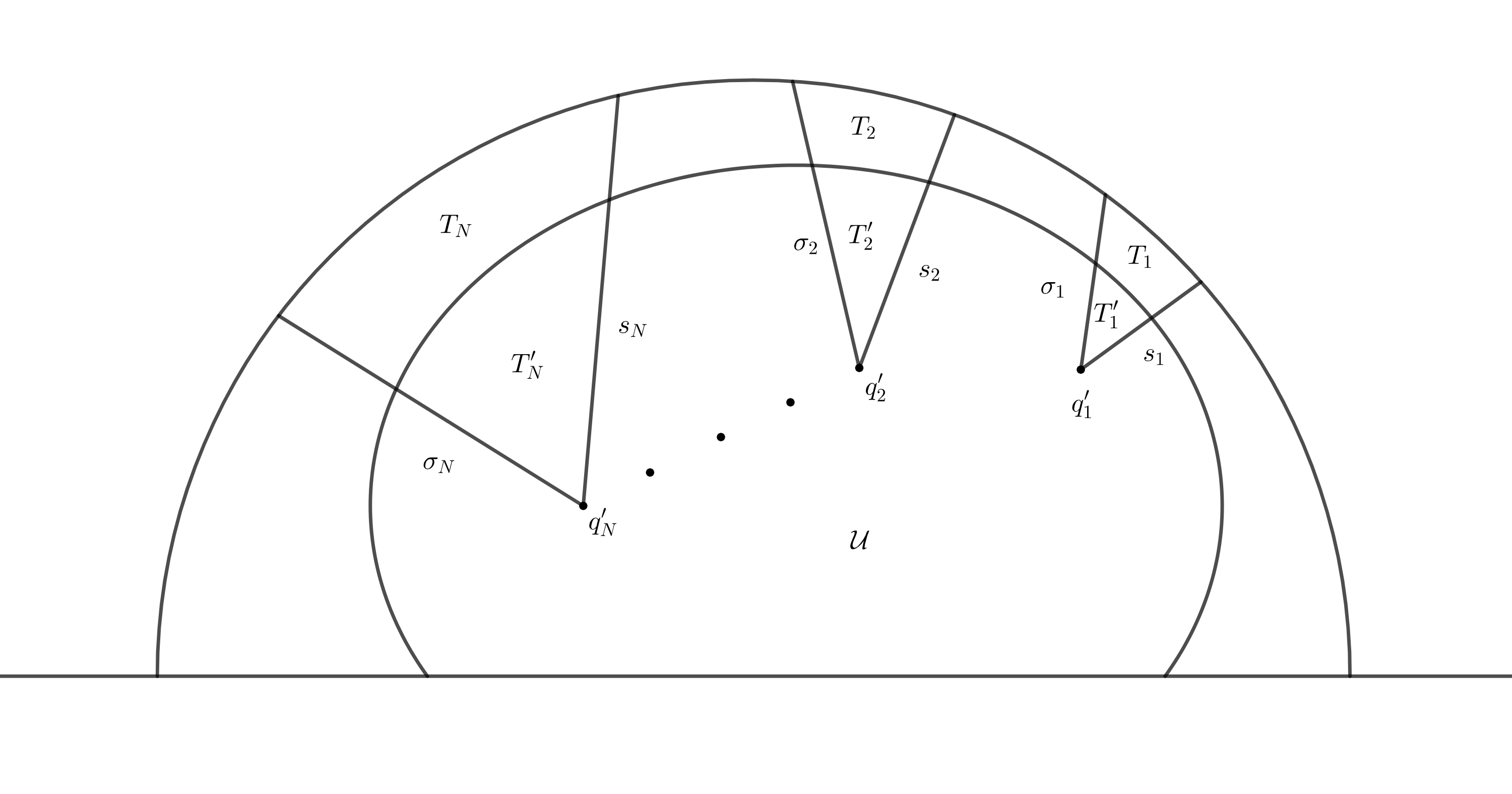}
\caption{An overview of the above geometric construction.}
\end{figure}

Again, the convexity of $\mathcal{U}_{I}^{+}$ gives that $(\mathcal U\cap \widehat\Omega\cap \widetilde\Omega)\cap \C_{I}^{+}=\mathcal U_{I}^{+}\setminus (s_{1}\cup\dots\cup s_{N}\cup\sigma_{1}\cup\dots\cup \sigma_{N})$ is
the union of a $N+1$ connected components which are given by $N$ smaller ``triangles'' $T'_\ell=T_\ell \cap \mathcal U_{I}^{+}$ with vertex in $q'_\ell$ and whose boundary in $\mathcal U_I^{+}$ is given by $(s_\ell\cup \sigma_\ell)\cap \mathcal U_{I}^{+}$ and a connected component which is the complement of these smaller triangles in $\mathcal U_{I}^{+}$ and will be denoted by $\mathcal U_I^0$.

The slice-contractibility of $\widehat\Omega$ and $\widetilde\Omega$ and the fact that $g_v$ has no non-real isolated zeroes in both these domains ensure that both $\hat h_v$ and $\tilde h_v$ have no non-real isolated zeroes and thus imply the existence of a square root $\sqrt{\hat h_{v}^{s}}$ on $\widehat\Omega$ and $\sqrt{\tilde h_{v}^{s}}$ on $\widetilde\Omega$. We also set $\widehat H_{v}=\frac{\hat h_{v}}{\sqrt{\hat h_{v}^{s}}}$ and 
$\widetilde H_{v}=\frac{\tilde h_{v}}{\sqrt{\tilde h_{v}^{s}}}$. 

We first perform the proof in the case when $\Omega$ is slice, by our assumptions also $\mathcal U$ is slice too.

Theorem~\ref{unicity} and Remark~\ref{remarkHV}, entail the existence of $\widehat m, \widetilde m\in\mathbb{Z}$
such that 
\begin{align}
f_\mathcal U-\hat h&=
2\widehat m\pi\hat H_{v} \qquad \text{on} \ \ \widehat{\mathcal U}
\label{prima-diff}\\
f_\mathcal U-\tilde h&=
2\widetilde m\pi \tilde H_{v} \qquad \text{on} \ \ \widetilde{\mathcal U} \label{seconda-diff}
\end{align}


Now set $\hat f=\hat h+2\pi \widehat m \widehat H_{v}$ on $\widehat \Omega$ and $\tilde f=\tilde h+2\pi \widetilde m \widetilde H_{v}$ on $\widetilde \Omega$. Equality~\eqref{prima-diff} entails $\hat f=f_{\mathcal{U}}$ on $\widehat{\mathcal{U}}$, while equality~\eqref{seconda-diff} gives $\tilde f=f_{\mathcal{U}}$ on $\widetilde{\mathcal{U}}$. Thus $\hat f=\tilde f$ on $\widehat{\mathcal{U}}\cap\widetilde{\mathcal{U}}$. As $\widehat{\mathcal{U}}\cap\widetilde{\mathcal{U}}$
contains accumulation points in any of the $N+1$ connected components of $\widehat{\Omega}\cap\widetilde{\Omega}$, then the Identity Principle implies $\hat f=\tilde f$ 
on $\widehat\Omega\cap\widetilde\Omega$.

Setting 
$$
f(q)=\begin{cases}
\widehat f(q),& \mbox{if }q\in\widehat \Omega,\\
\widetilde f(q),& \mbox{if }q\in\widetilde \Omega,\\
f_{\mathcal{U}}(q),& \mbox{if }q\in \mathcal{U}
\end{cases}
$$
gives a well defined slice regular function which is a $*$-logarithm of $g$ on $\Omega$.

We now turn our attention to the case in which $\Omega$ is product, which of course
entails that $\mathcal{U}$ is product, too. 

Theorem~\ref{unicity} entails the existence of $\widehat n,\widehat m, \widetilde n, \widetilde m\in\mathbb{Z}$ with $\widehat n\equiv  \widehat m$ $($mod. $2)$ and $\widetilde n\equiv  \widetilde m$ $($mod. $2)$, such that 
\begin{align}
f_\mathcal U-\hat h&=\pi\widehat n\mathcal{J}+\pi\widehat m\widehat H_{v} \qquad \text{on} \ \ \widehat{\mathcal U}
\label{prima-diff-nuovo}\\
f_\mathcal U-\tilde h&=\pi\widetilde n\mathcal{J}+\pi\widetilde m\widetilde H_{v} \qquad \text{on} \ \ \widetilde{\mathcal U}
 \label{seconda-diff-nuovo}
\end{align}
Setting again $\hat f= \hat h+\pi\widehat n\mathcal{J}+\pi\widehat m\widehat H_{v}$ on $\widehat \Omega$
and $\tilde f= \tilde h+\pi\widetilde n\mathcal{J}+\pi\widetilde m\widetilde H_{v}$ on $\widetilde \Omega$
and reasoning as above, gives the existence of a $*$-logarithm of $g$ on $\Omega$.
%
%
%
%
%
%
\end{proof}

In the case when $\Omega$ is product, the second condition of the previous theorem can be relaxed.

\begin{corollary}
Let $g\in\So^{1}(\mathbb{D})$ be such that 
\begin{itemize}
\item $g_{v}$ has
a finite number of non-real isolated zeros $\{q_{1},\dots,q_{N}\}$;
\item $g_{0}(q_{\ell})=-1$ for all $\ell=1,\dots, N$;
\item the union $\SF_{q_{1}}\cup\dots\cup\SF_{q_{N}}$ is contained in a unique connected
component $\mathcal{U}$ of $\Omega\setminus g_{0}^{-1}((-\infty,-1])$.
\end{itemize}
If for some $I\in\SF$ (and hence for any) the set $\mathcal{U}_{I}^{+}=\mathcal{U}\cap\C_{I}^{+}$ is convex, then there exists a slice regular $*$-logarithm of $g$. 
\end{corollary}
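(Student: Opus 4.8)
The plan is to deduce this from Theorem~\ref{final-result} by passing to $-g$. Since $\mathbb{D}$ is a product domain, Example~\ref{exppiJ=-1} gives $\exp_{*}(\pi\mathcal{J})\equiv-1$, and as $\mathcal{J}$ is slice preserving we have $-g=\exp_{*}(\pi\mathcal{J})*g$ with $\pi\mathcal{J}$ $*$-commuting with every slice regular function. So I would set $\tilde g=-g$ and check that it meets the hypotheses of Theorem~\ref{final-result} on $\Omega=\mathbb{D}$: here $\tilde g^{s}=(-g)*(-g)^{c}=g*g^{c}=g^{s}\equiv1$, so $\tilde g\in\So^{1}(\mathbb{D})$; the vector part $\tilde g_{v}=-g_{v}$ has the same zero set as $g_{v}$, hence the same finite family $\{q_{1},\dots,q_{N}\}$ of non-real isolated zeros; and $\tilde g_{0}=-g_{0}$ yields $\tilde g_{0}(q_{\ell})=1$ for all $\ell$.

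The only clause that needs translating is the third one. Since $\tilde g_{0}=-g_{0}$, one has $\tilde g_{0}^{-1}((-\infty,-1])=g_{0}^{-1}([1,+\infty))$, so the connected component $\mathcal{U}$ containing $\SF_{q_{1}}\cup\dots\cup\SF_{q_{N}}$ is a connected component of $\mathbb{D}\setminus\tilde g_{0}^{-1}((-\infty,-1])$; the semislice $\mathcal{U}_{I}^{+}$ is convex by hypothesis, and the requirement ``$\mathcal{U}$ slice if $\Omega$ is'' is vacuous because $\mathbb{D}$ is a product domain. Thus Theorem~\ref{final-result} applies to $\tilde g$ and produces a slice regular function $\tilde f\in\So(\mathbb{D})$ with $\exp_{*}(\tilde f)=\tilde g$.

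Finally I would recover $g$. Since $\mathcal{J}$ is slice preserving, $\pi\mathcal{J}*\tilde f=\tilde f*\pi\mathcal{J}$, so the addition formula for $\exp_{*}$ gives
\[
\exp_{*}(\pi\mathcal{J}+\tilde f)=\exp_{*}(\pi\mathcal{J})*\exp_{*}(\tilde f)=(-1)\cdot(-g)=g ,
\]
and $f=\pi\mathcal{J}+\tilde f$ is the desired slice regular $*$-logarithm of $g$. There is no genuine obstacle here: the whole content is the observation that $g\mapsto-g$ swaps the value $-1$ at the isolated zeros of $g_{v}$ for the value $1$ at those of $\tilde g_{v}$, and swaps $g_{0}^{-1}([1,+\infty))$ for $\tilde g_{0}^{-1}((-\infty,-1])$, so that Theorem~\ref{final-result} becomes directly applicable, while the substitution is undone for free on logarithms by adding $\pi\mathcal{J}$, which is central for the $*$-product; the mild bookkeeping of matching the third hypothesis under this substitution is the only thing to watch.
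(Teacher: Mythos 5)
Your proof is correct and is exactly the paper's argument: apply Theorem~\ref{final-result} to $-g$ and add $\pi\mathcal{J}$ to the resulting $*$-logarithm, using $\exp_{*}(\pi\mathcal{J})\equiv-1$ and the centrality of $\pi\mathcal{J}$. The one point worth flagging is that the third bullet as printed (with $g_{0}^{-1}((-\infty,-1])$) is actually incompatible with the second bullet, since $g_{0}(q_{\ell})=-1$ forces $q_{\ell}\in g_{0}^{-1}((-\infty,-1])$; your translation step tacitly reads the hypothesis as referring to $\Omega\setminus g_{0}^{-1}([1,+\infty))$, which is the intended (and, as your computation $\tilde g_{0}^{-1}((-\infty,-1])=g_{0}^{-1}([1,+\infty))$ shows, the only workable) version, consistent with the unlabelled corollary following Theorem~\ref{local2}.
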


\begin{proof}
By applying Theorem~\ref{final-result} to $-g$, we find a $*$-logarithm $f$ of $-g$,
then the function $f+\pi\mathcal{J}$ is a $*$-logarithm of~$g$.
\end{proof}

\begin{remark}
We notice that the statement of Theorem~\ref{final-result} can be generalized to a larger variety of
domains and functions. Indeed, the techniques we use in the proof can be applied 
 when $\Omega$ is slice contractible, all the non-real isolated zeroes of $g_{v}$
belong to the same connected component $\mathcal{U}$ of $\Omega\setminus g_{0}^{-1}((-\infty,-1])$ and for any non-real isolated zero we can ``draw'' two paths issuing from the non-real isolated zeroes of $g_{v}$ which give two contractible subdomains of $\Omega_{I}$ 
and do not disconnect $\mathcal{U}_{I}$.
\end{remark}

\bibliographystyle{amsplain}

\end{document}